\date{}
\def\A{{\mathcal A}}
\def\C{{\mathbb C}}
\def\D{{\mathcal D}}
\def\O{{\mathcal O}}
\def\H{{\mathcal H}}
\def\N{\mathbb{N}}
\def\H{\mathcal H}
\def\D{\mathbb D}
\def\ra{\rightarrow}
\def\ov{\overline}
\def\lo{\longrightarrow}
\def\d{\sum}
\def\beqa{\begin{eqnarray*}}
\def\eeqa{\end{eqnarray*}}
 \newtheorem{thm}{Theorem}[section]
 \newtheorem{cor}[thm]{Corollary}
 \newtheorem{lem}[thm]{Lemma}
 \newtheorem{prop}[thm]{Proposition}
 \theoremstyle{definition}
 \newtheorem{defn}[thm]{Definition}
 \newtheorem{rem}[thm]{Remark}
 \numberwithin{equation}{section}
\begin{document}
\title[Hilbert modules]{Unitary invariants for Hilbert modules of finite rank }

\author[Biswas]{Shibananda Biswas}

\author[Misra]{Gadadhar Misra}
\address[Shibananda Biswas and Gadadhar Misra]{Department of Mathematics, Indian Institute of Science, Banaglore 560012}

\author[Putinar]{Mihai Putinar}
\address[Mihai Putinar]{Department of Mathematics , University of California, Santa Barbara, CA 93106}

\email[S. Biswas]{shibu@math.iisc.ernet.in} \email[G. Misra]{gm@math.iisc.ernet.in} \email[M.
Putinar]{mputinar@math.ucsb.edu}

\thanks{Financial support for the work of S. Biswas was provided in the
form of a Research Fellowship of the Indian Statistical Institute.  The work of G. Misra was  supported in part
by a grant from the Department of Science and Technology, India. The work of  M. Putinar was supported in part
by a grant from National Science Foundation, US }

\subjclass[2000]{47B32, 47B35, 32A10, 32A36, 32A38}

\keywords{Hilbert module, reproducing kernel function, Analytic Hilbert module, submodule, resolution, 
holomorphic Hermitian vector bundle, coherent sheaf, linear space, Gleason problem, privilege,}

\begin{abstract}
A refined notion of curvature for a linear system of Hermitian vector spaces, in the sense of Grothendieck, leads to the unitary classification of a large class of analytic Hilbert modules. Specifically, we study Hilbert sub-modules, for which the localizations are of finite (but not constant) dimension, of an analytic function space with a reproducing kernel.  The correspondence between analytic Hilbert modules of constant rank and holomorphic Hermitian bundles on domains of $\mathbb C^n$ due to Cowen and Douglas, as well as a natural analytic localization technique derived from the Hochschild cohomology of topological algebras play a major role in the proofs. A series of concrete computations, inspired by representation theory of linear groups, illustrate the abstract concepts of the paper.
\noindent
\end{abstract}

\maketitle

\section{Preliminaries and main results}\label{int}

Without aiming at completeness, the rather lengthy introduction below recalls some of the main concepts of Cowen-Douglas theory,
a localization technique in topological homology and aspects of complex Hermitian geometry as they interlace in the unitary 
classification of analytic Hilbert modules. The ideas invoked in the present work have evolved and converged from
quite distinct sources for at least half a century.

One of the basic problem in the study of a Hilbert module $\mathcal H$ over the ring of polynomials
$\C[\underline z]:=\mathbb C[z_1, \ldots , z_m]$ (or equivalently $\mathcal O(\C^m)$ module) is to find unitary
invariants (cf. \cite{dp, cg}) for $\mathcal H$.  It is not always possible to  find invariants that are
complete and yet easy to compute. There are very few instances where a set of complete invariants have been
identified.  Examples are Hilbert modules over continuous functions (spectral theory of normal operator),
contractive modules over the disc algebra (model theory for  contractive operator) and Hilbert modules in the
class $\mathrm{B}_n(\Omega)$ for a bounded domain $\Omega \subseteq \mathbb C^m$ (adjoint of multiplication
operators on reproducing kernel Hilbert spaces).  In this paper, we study  Hilbert modules consisting of holomorphic functions on some bounded domain
possessing a reproducing kernel.  Our methods apply, in particular, to submodules of Hilbert modules in $\mathrm{B}_1(\Omega)$.

\subsection{\sf  The algebraic and analytic framework}
The class $\mathrm{B}_n(\Omega)$ was introduced in \cite{cd, cd1} and an alternative approach was outlined in
\cite{cs}.  The definition of this class given below is clearly equivalent to the one in \cite[Definition 1.2]{cd} and \cite[Definition 1.1]{cs}.
\begin{defn} \label{B_n}
A Hilbert module $\mathcal H$ over the polynomial  ring  $\C[\underline z]$ is said to be in the class $\mathrm{B}_n(\Omega)$, $n\in \mathbb N$, if
\begin{enumerate}
\item[\sf{(const)}] $\dim\mathcal H/\mathfrak m_w\mathcal H = n <\infty$ for all $w\in\Omega$;
\item[\sf{(span)}] $\cap_{w\in\Omega}\mathfrak m_w\mathcal H = {0},$
\end{enumerate}
where $\mathfrak m_w$ denotes the maximal ideal in $\C[\underline z]$ at $w$.
\end{defn}
Recall that if $\mathfrak m_w\mathcal H$ has finite codimension then $\mathfrak m_w\mathcal H$ is a closed subspace of $\mathcal H$. Throughout this paper we call $\dim\mathcal H/\mathfrak m_w\mathcal H$ {\it the rank} of the analytic module at the point
$w$.
For any Hilbert module $\mathcal H$ in $\mathrm B_n(\Omega)$, the analytic localization $\mathcal O\hat\otimes_{\mathcal O(\C^m)}\mathcal H$ is a locally free module when restricted to $\Omega$, see for details \cite{EP}. Let us denote in short
$$
\hat{\mathcal H}:=\mathcal O\hat\otimes_{\mathcal O(\C^m)}\mathcal H\big{|}_{\Omega},
$$
and let $E_{\mathcal H} = \hat{\mathcal H}|_\Omega$ be the associated holomorphic vector bundle. Fix $w\in\Omega$. The minimal
projective resolution of the maximal ideal at the point $w$ is given by
the Koszul complex $K_{\centerdot}(z-w, \mathcal H)$, where $K_p(z-w, \mathcal H)= \mathcal H\otimes
\wedge^p(\C^m)$ and the connecting maps $\delta_p(w):K_p \ra K_{p-1}$ are defined, using  the standard basis vectors $e_i,\,1\leq i\leq m$ for $\C^m$, by
$$
\delta_p(w) (fe_{i_1}\wedge\ldots\wedge e_{i_p})= \d_{j=1}^p(-1)^{j-1}(z_j-w_j)\cdot f e_{i_1}\wedge\ldots\wedge
\hat e_{i_j}\wedge\ldots\wedge e_{i_p}.
$$
Here, $z_i \cdot f$ is the module multiplication.
In particular $\delta_1(w):\mathcal H\oplus\ldots\oplus\mathcal H\ra \mathcal H$ is defined by
$(f_1,\ldots,f_m)\mapsto \sum_{j=1}^m(M_j-w_j)f_j$, where $M_i$  is the operator $M_j: f \mapsto z_j \cdot f$, for
$1\leq j \leq m$ and $f \in \mathcal H$. The $0$-th homology group of the complex, $H_0(K_{\centerdot}(z-w,
\mathcal H))$ is same as $\mathcal H/\mathfrak m_w\mathcal H$. For $w\in\Omega$, the map $\delta_1(w)$ induces a map localized at w,
$$
K_1(z-w, \hat{\mathcal H}_w)\overset{{\delta_1}_w(w)}\longrightarrow K_0(z-w, \hat{\mathcal H}_w).
$$ Then $\hat{\mathcal H}_w= \mbox{coker}~{{\delta_1}_w}(w)$ is a locally free $\mathcal O_w$ module and the fiber of the associated
holomorphic vector bundle $E_{\mathcal H}$ is given by
$$
E_{\mathcal H, w}=\hat {\mathcal H}_w\otimes_{\mathcal O_w}\mathcal O_w/\mathfrak m_w\mathcal O_w.
$$
We identify $E_{\mathcal H, w}^*$ with $\ker~\delta_1(w)^*$. Thus $E^*_{\mathcal H}$ is a
Hermitian holomorphic vector bundle on $\Omega^*:=\{\bar z:z\in\Omega\}$. Let $D_{\mathbf M^*}$ be the commuting $m$-tuple $({M_1}^*,\ldots ,{M_m}^*)$ from $\mathcal H$ to $\mathcal
H\oplus\ldots\oplus\mathcal H$. Clearly $\delta_1(w)^*= D_{(\mathbf M-w)^*}$ and $\ker~\delta_1(w)^*=\ker D_{(\mathbf M-w)^*} =
\cap_{j=1}^m \ker ({M_j}-w_j)^*$ for $w \in\Omega$.

Let $\mathrm{Gr}(\mathcal H, n)$ be the rank $n$ Grassmanian on the Hilbert module $\mathcal H$.  The map
$\Gamma:\Omega^* \to \mathrm{Gr}(\mathcal H, n)$ defined by $\bar w \mapsto \ker D_{(\mathbf M-w)^*}$ is shown to be
holomorphic in \cite{cd}. The pull-back of the canonical vector bundle on $\mathrm{Gr}(\mathcal H, n)$ under
$\Gamma$ is then the holomorphic Hermitian vector bundle $E^*_\mathcal H$ on the open set $\Omega^*$. One of the
main theorems of \cite{cd} states that isomorphic Hilbert modules correspond to equivalent vector bundles and
vice-versa. Examples of these are Hardy and the Bergman modules over the ball and the poly-disc in $\mathbb
C^m$.

\subsection{\sf Submodules of Hilbert modules possessing a reproducing kernel}
Let $\mathcal H$ be a Hilbert module in $\mathrm{B}_1(\Omega)$ possessing a non-degenerate reproducing kernel $K(z,w)$, that is, $K(w,w) \not = 0,\, w\in \Omega$. We will often write  $K_w$ for the function $K(\cdot, w)$. Then
$E^*_{\mathcal H}\cong\mathcal O_{\Omega^*}$, that is, the associate holomorphic vector bundle is trivial, with $K_w$ as
a non-vanishing global section. For modules in $\mathrm B_1(\Omega)$, the curvature of the vector
bundle $E^*_{\mathcal H}$ is a complete invariant. However, in many natural examples of submodules of Hilbert
modules from the class $\mathrm B_1(\Omega)$, the dimension of the joint kernel does not remain constant.
For instance, in the case of $H_0^2(\mathbb D^2):=\{f \in H^2(\mathbb D^2): f(0) = 0\}$ (cf. \cite{D}), we have
$$
\dim \ker D_{(\mathbf M - w)^*} = \dim H_0^2(\mathbb D^2) \otimes_{\C[z_1, z_2]} \mathbb C_w  = \begin{cases} 1
&\mbox{if}\,\, w \not = (0,0)\\ 2  & \mbox{if}\,\,w = (0,0). \end{cases}
$$
Here $\mathbb C_w$ is the one dimensional module over the polynomial ring $\C[z_1,z_2]$, where the module
action is given by the map $(f, \lambda) \mapsto f(w) \lambda$ for $f\in \mathcal C_2$ and $\lambda \in \mathbb
C_w\cong \mathbb C$.

In examples like the one given above, the map $\bar w \mapsto \ker D_{(\mathbf M -w)^*} $ is not holomorphic on all
of $\mathbb D^2$ but only on $\mathbb D^2 \setminus \{(0,0)\}$. 
However, we recall that the map $w \mapsto \dim (\mathcal M / \mathfrak m_w \mathcal M)$ is upper semi-continuous and the 
jump locus, which is the set $\Omega \setminus \{w: \dim (\mathcal M / \mathfrak m_w \mathcal M) = {\rm constant}\}$, is an analytic set.  
In this paper, we begin a systematic study of a class of 
submodules of kernel Hilbert modules (over the polynomial ring $\C[\underline z]$) in $\mathrm{B}_1(\Omega)$ characterized by requiring that 
$\dim (\mathcal M / \mathfrak m_w \mathcal M)$, $w\in \Omega$, is finite.  
\begin{defn}
A Hilbert module $\mathcal M$ over the polynomial  ring  $\C[\underline z]$ is said to be in the class
$\mathfrak{B}_1(\Omega)$ if
\begin{enumerate}
\item[\sf{(rk)}] possess a reproducing kernel $K$ (we don't rule out the possibility: $K(w,w)=0$ for  $w$ in
some closed subset $X$ of $\Omega$) and \item[\sf{(fin)}] The dimension of $\mathcal M/\mathfrak m_w\mathcal M$
is finite for all $w\in \Omega$.
\end{enumerate}
\end{defn}

The following Lemma isolates a large class of elements  from  $\mathfrak B_1(\Omega)$ which belong to
$\mathrm{B}_1(\Omega_0)$ for some open subset $\Omega_0\subseteq \Omega$.
\begin{lem} \label{B_1}
Suppose $\mathcal M \in \mathfrak B_1(\Omega)$ is the closure of a polynomial ideal $\mathcal I$.  Then
$\mathcal M$ is in $\mathrm{B}_1(\Omega)$ if the ideal  $\mathcal I$ is singly generated while if it is
generated by the polynomials $p_1, p_2,\ldots , p_t$, then $\mathcal M$ is in $\mathrm{B}_1(\Omega\setminus X)$
for $X=\cap_{i=1}^t \{z:p_i(z)=0\}\cap \Omega$.
\end{lem}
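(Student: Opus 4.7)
The plan is to verify the two axioms (span) and (const) of $\mathrm{B}_1$ separately. The span condition is immediate: since $\mathcal{M}$ possesses a reproducing kernel, point evaluation at $w$ is continuous and annihilates $\mathfrak{m}_w\mathcal{M}$, so any $f\in\bigcap_w\mathfrak{m}_w\mathcal{M}$ vanishes pointwise on $\Omega$ and hence identically. The substance is therefore the rank condition $\dim(\mathcal{M}/\mathfrak{m}_w\mathcal{M})=1$ at the relevant points, which I will establish by proving matching upper and lower bounds.

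\textbf{Upper bound.} Pick a distinguished generator $p_{i_0}$ of $\mathcal{I}$ (take $p_{i_0}=p$ when the ideal is principal, and any $p_{i_0}$ with $p_{i_0}(w)\neq 0$ when $w\in\Omega\setminus X$). For any polynomial $q$, the Taylor identity $q=q(w)+\sum_j(z_j-w_j)r_j$ gives $p_iq-q(w)p_i=\sum_j(z_j-w_j)(p_ir_j)\in\mathfrak{m}_w\mathcal{M}$, since each $p_ir_j$ remains in $\mathcal{I}\subseteq\mathcal{M}$; hence $[p_iq]=q(w)[p_i]$ in the quotient. Applying this identity to both $p_{i_0}p_i$ and $p_ip_{i_0}$ and exploiting commutativity of the polynomial ring yields $p_{i_0}(w)[p_i]=p_i(w)[p_{i_0}]$, so every class $[p_i]$ is a scalar multiple of $[p_{i_0}]$. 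Finite-codimensionality of $\mathfrak{m}_w\mathcal{M}$ forces it to be closed and the quotient to be Hausdorff; passing to limits from the dense subspace $\mathcal{I}\subseteq\mathcal{M}$ then gives $\mathcal{M}/\mathfrak{m}_w\mathcal{M}=\mathbb{C}[p_{i_0}]$, bounding the dimension by one.

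\textbf{Lower bound and main obstacle.} When $p_{i_0}(w)\neq 0$, the continuity of point evaluation forces every element of $\mathfrak{m}_w\mathcal{M}$ to vanish at $w$, so $p_{i_0}\notin\mathfrak{m}_w\mathcal{M}$ and $[p_{i_0}]\neq 0$. This dispatches the multi-generator statement. The only delicate case is the singly generated one at $w\in V(p)\cap\Omega$, where $p$ itself vanishes at $w$ and this crude argument fails; I propose to separate $p$ from $\mathfrak{m}_w\mathcal{M}$ by tracking orders of vanishing. Let $k$ be the order of $p$ at $w$, so $p\in\mathfrak{m}_w^k\setminus\mathfrak{m}_w^{k+1}$ in the local polynomial ring. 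Each partial derivative $f\mapsto\partial^\alpha f(w)$ on $\mathcal{M}$ is a bounded functional, realized by differentiating the reproducing kernel in its conjugate variable; for $|\alpha|<k$ it annihilates the dense subset $p\,\mathbb{C}[\underline{z}]$ and hence all of $\mathcal{M}$. Consequently every element of $\mathcal{M}$ vanishes at $w$ to order at least $k$, and every element of $\mathfrak{m}_w\mathcal{M}$ to order at least $k+1$; since $p$ has a non-zero Taylor coefficient of order exactly $k$, one concludes $p\notin\mathfrak{m}_w\mathcal{M}$.

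The principal obstacle is precisely this last step: one must promote the algebraic fact $p\notin\mathfrak{m}_w\,\mathcal{I}$ in the local ring to its analytic avatar in $\mathcal{M}$, which rests on the boundedness of higher-order derivative functionals in the reproducing kernel Hilbert space together with the additivity of $\mathrm{ord}_w$ under multiplication. Everything else reduces to linear algebra inside the finite-dimensional quotient $\mathcal{M}/\mathfrak{m}_w\mathcal{M}$.
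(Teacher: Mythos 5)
Your proof is correct, and it takes a genuinely different route from the paper's. The paper argues on the dual side: it shows that any joint eigenvector $\gamma_w$ of $D_{(\mathbf M-w)^*}$ is a scalar multiple of one explicit vector, namely $K(\cdot,w)$ in the multi-generator case and $K_1(\cdot,w)=p(\cdot)\chi(\cdot,w)$ in the principal case, the latter coming from the factorization $K(z,w)=p(z)\chi(z,w)\overline{p(w)}$. In effect the paper treats $\dim\leq 1$ and $\dim\geq 1$ in one stroke by exhibiting a spanning eigenvector. You instead work directly in the quotient $\mathcal M/\mathfrak m_w\mathcal M$, proving the upper bound by the purely algebraic observation $[p_iq]=q(w)[p_i]$ and $p_{i_0}(w)[p_i]=p_i(w)[p_{i_0}]$ together with density of $\mathcal I$, and the lower bound at a zero $w$ of $p$ by comparing orders of vanishing: elements of $\mathcal M$ vanish to order $\geq\mathrm{ord}_w p=k$ while elements of $\mathfrak m_w\mathcal M$ vanish to order $\geq k+1$, so $p\notin\mathfrak m_w\mathcal M$. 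What your version buys is that it avoids the kernel factorization $K=p\chi\bar p$ entirely, replacing it with the more elementary order-of-vanishing argument; the price is that you must invoke boundedness of the higher-order derivative functionals $f\mapsto\partial^\alpha f(w)$ on a reproducing-kernel Hilbert space, a standard fact that the paper itself records later as $(\partial^\alpha f)(w)=\langle f,\bar\partial^\alpha K(\cdot,w)\rangle$. What the paper's version buys is an explicit, global spanning section $K_1(\cdot,w)$ of the resulting line bundle, which is then immediately available for the subsequent geometric analysis; your argument establishes existence of a nonzero class without producing a canonical representative.

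One small point worth making explicit in a write-up: your upper bound uses that $\mathfrak m_w\mathcal M$ is closed, which the paper justifies by finite codimension (range of a bounded operator with finite-codimensional range is closed). You invoke this but could flag it; it is also what lets you pass from ``$\pi(\mathcal I)$ is dense in the finite-dimensional quotient'' to equality.
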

\begin{proof}
The proof is a refinement of the argument given in \cite[pp. 285]{dm2}. Let $\gamma_w$ be any eigenvector at $w$
for the adjoint of the module multiplication, that is, $M_p^* \gamma_w = \overline{p(w)} \gamma_w$ for $p \in
\C[\underline z]$.

First, assume that the module $\mathcal M$ is generated by the single polynomial, say $p$.  In this case,
$K(z,w) = p(z) \chi(z,w) \overline{p(w)}$ for some positive definite kernel $\chi$ on all of $\Omega$. Set
$K_1(z,w) = p(z)\chi(z,w)$ and note that  $K_1(\cdot,w)$ is a non-zero eigenvector at $w \in \Omega$.  We have
\begin{eqnarray*}
\langle pq, \gamma_w\rangle = \langle p, M_q^*\gamma_w\rangle = \langle p, \ov{q(w)}\gamma_w\rangle &=&
q(w)\langle p, \gamma_w\rangle\\ & = &\frac{\langle pq, K(\cdot, w)\rangle\langle p, \gamma_w\rangle}{p(w)} =
\langle pq, \ov{\langle p, \gamma_w\rangle}K_1(\cdot, w)\rangle.
\end{eqnarray*}
Since vectors of the form $\{pq: q\in\C[\underline z]\}$ are dense in $\mathcal M$, it follows that $\gamma_w
=\ov{\langle p, \gamma_w\rangle} K_1(\cdot, w)$ and  the proof is complete in this case.

Now, assume that $p_1, \ldots , p_t$ is a set of generators for the ideal $\mathcal I$.  Then for $w \not \in
X$, there exist a $k\in\{1,\ldots,t\}$ such that $p_k(w)\neq 0$. We note that for any $i,\, 1\leq m$,
$$
p_k(w)\langle p_i, \gamma_w\rangle = \langle p_i, M_{p_k}^*\gamma_w\rangle = \langle p_i p_k , \gamma_w \rangle
= \langle p_k, M_{p_i}^*\gamma_w\rangle = p_i(w)\langle p_k, \gamma_w\rangle.
$$
Therefore we have
\begin{eqnarray*}
\langle \sum_{i=1}^tp_iq_i, \gamma_w\rangle = \sum_{i=1}^t\langle p_iq_i, \gamma_w\rangle = \sum_{i=1}^t\langle
p_i, M_{q_i}^*\gamma_w\rangle &=& \sum_{i=1}^tq_i(w)\langle p_i, \gamma_w\rangle\\ &=& \sum_{i=1}^t\langle
p_iq_i, \frac{\ov{\langle p_k, \gamma_w\rangle}K(\cdot, w)}{\ov{p_k(w)}}\rangle.
\end{eqnarray*}
Let $c(w) = \frac{\langle p_k, \gamma_w \rangle}{p_k(w)}$. Hence
$$
\sum_{i=1}^t \langle p_i q_i, \gamma_w \rangle =  \langle \sum_{i=1}^tp_iq_i, \ov{c(w)}K(\cdot, w)\rangle .
$$
Since vectors of the form $\{\sum_{i=1}^tp_iq_i: q_i\in\C[\underline z],\, 1\leq i\leq t\}$ are dense in
$\mathcal M$, it follows that $\gamma_w=\ov{c(w)}K(\cdot, w)$ completing the proof of the second half.
\end{proof}
\subsection{\sf The sheaf construction}
From the work of \cite{cd, cd1}, it is known that invariants for holomorphic Hermitian bundles are not easy
to compute. We show how to do this for a large family of examples.  It then becomes clear that to find easily
computable invariants, we must look elsewhere.  Using techniques from commutative algebra and complex analytic
geometry, in the framework of Hilbert modules, we have obtained some new invariants.

Let us consider a Hilbert module $\mathcal M$ in the class $\mathfrak B_1(\Omega)$ which is a submodule of some Hilbert module $\mathcal H$ in
$\mathrm B_1(\Omega)$, possessing a nondegenerate reproducing kernel $K$. Clearly then we have the following module map
\begin{eqnarray}\label{mot}
\mathcal O\hat\otimes_{\mathcal O(\C^m)}\mathcal M\longrightarrow\mathcal O\hat\otimes_{\mathcal O(\C^m)}\mathcal H\cong \mathcal O_{\Omega}.
\end{eqnarray}
Let $\mathcal S^{\mathcal M}$ denotes the range of the composition map in the above equation. Then the stalk of
$\mathcal S^{\mathcal M}$ at $w\in\Omega$ is given by $\{(f_1)_w \mathcal O_w + \cdots + (f_n)_w \mathcal
O_w : f_1, \ldots , f_n \in \mathcal M \}$

Motivated by the above construction and the analogy with the correspondence of a vector bundle with a locally free sheaf \cite{Wells},
we construct a sheaf $\mathcal S^\mathcal M$ for the Hilbert module $\mathcal M$ over the polynomial ring $\C[\underline z]$,
in the class $\mathfrak B_1(\Omega)$.
The sheaf $\mathcal S^{\mathcal M}$ is the subsheaf of the sheaf of holomorphic functions $\mathcal O_\Omega$ whose stalk $\mathcal S^{\mathcal M}_w$ at $w \in
\Omega$ is
$$ \big \{(f_1)_w \mathcal O_w + \cdots + (f_n)_w \mathcal
O_w : f_1, \ldots , f_n \in \mathcal M \big \}, $$ or equivalently,
$$\mathcal S^{\mathcal M}(U) = \Big\{\sum_{i=1}^n \big ( {f_i}_{| U}
\big) g_i: f_i \in \mathcal M, g_i \in \mathcal O(U) \Big \}$$ for $U$ open in $\Omega$.

Following the proof of \cite[Theorem 2.3.3]{cg}, which is a consequence of the well known Cartan Theorems $A$ and $B$,
it is not hard to see that if $\mathcal M$ is any module in
$\mathfrak B_1(\Omega)$ with a finite set of generators $\{f_1, \ldots , f_t\}$, then for any $f\in \mathcal
S^\mathcal M$ we have
\begin{equation} \label{genmod}
f = f_1 g_1 + \cdots + f_t g_t
\end{equation}
for some $g_1, \ldots , g_t \in \mathcal O(\Omega)$.  Consequently, if $\mathcal M_1$ and $\mathcal M_2$ are
isomorphic modules in $\mathfrak B_1(\Omega)$ which are finitely generated, then $\mathcal S^{\mathcal
M_1}$ and  $\mathcal S^{\mathcal M_2}$ are isomorphic as modules. This isomorphism is
implemented by extending the map given on the generators of $\mathcal M_1$ to the module $\mathcal S^{\mathcal
M_1}$. It is easily seen to be well-defined using \eqref{genmod}.

It is clear that if the Hilbert module $\mathcal M$ is in the class ${\mathrm B}_1(\Omega)$, then the sheaf
$\mathcal S^\mathcal M$ is locally free. Also, if the Hilbert module is taken to be the   maximal set of
functions vanishing on an analytic hyper-surface $\mathcal Z$, then  the sheaf $S^{\mathcal M}$
coincides with the ideal sheaf $\mathcal I_\mathcal Z(\Omega)$  and therefore it is coherent (cf.\cite{GR}).
However, much more is true
\begin{prop}
For any Hilbert module $\mathcal M$ in $\mathfrak B_1(\Omega)$, the sheaf $\mathcal S^\mathcal M$ is
coherent.
\end{prop}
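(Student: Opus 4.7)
The plan is to deduce coherence from local finite generation: by Oka's coherence theorem $\mathcal{O}_{\Omega}$ is a coherent sheaf of rings, and any subsheaf of finite type of a coherent sheaf is itself coherent, since the syzygy condition for the subsheaf is inherited from the ambient one. It therefore suffices to produce, in a neighborhood of each point $w_0\in\Omega$, finitely many sections of $\mathcal{S}^{\mathcal M}$ that generate it as an $\mathcal{O}_\Omega$-module on that neighborhood.

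Fix $w_0\in \Omega$. By the hypothesis (fin), $n:=\dim(\mathcal M/\mathfrak m_{w_0}\mathcal M)<\infty$, so I would choose $f_1,\ldots,f_n\in \mathcal M$ whose classes span this quotient. The very definition of $\mathcal{S}^{\mathcal M}$ yields a natural surjection $\mathcal M/\mathfrak m_{w_0}\mathcal M \twoheadrightarrow \mathcal{S}^{\mathcal M}_{w_0}/\mathfrak m_{w_0}\mathcal{S}^{\mathcal M}_{w_0}$, so the images of $f_1,\ldots,f_n$ also span the right-hand side. Since $\mathcal{O}_{w_0}$ is a Noetherian local ring, $\mathcal{S}^{\mathcal M}_{w_0}\subseteq \mathcal{O}_{w_0}$ is finitely generated, and Nakayama's lemma then upgrades the preceding spanning statement to the conclusion that the germs $(f_1)_{w_0},\ldots,(f_n)_{w_0}$ generate $\mathcal{S}^{\mathcal M}_{w_0}$ over $\mathcal O_{w_0}$.

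What remains is to promote this stalk-wise generation to uniform generation on a neighborhood $U$ of $w_0$, that is, to show that the finitely generated subsheaf $\mathcal{S}_0:=\sum_i f_i\,\mathcal{O}_\Omega \subseteq \mathcal{S}^{\mathcal M}$ actually equals $\mathcal{S}^{\mathcal M}$ on some $U$. This is the main obstacle: pointwise Nakayama by itself does not yield the uniform statement, and the quotient $\mathcal{S}^{\mathcal M}/\mathcal{S}_0$ is not a priori a coherent sheaf on which one could apply the usual vanishing-in-a-neighborhood argument. I would resolve it using the analytic localization $\hat{\mathcal M}=\mathcal O \hat\otimes_{\mathcal O(\mathbb C^m)}\mathcal M$ already invoked in the paper: the Eschmeier--Putinar framework together with (fin) shows that $\hat{\mathcal M}$ is a coherent analytic sheaf on $\Omega$, and $\mathcal{S}^{\mathcal M}$ is precisely the image of $\hat{\mathcal M}$ under the natural $\mathcal O_\Omega$-linear map into $\mathcal{O}_\Omega$ coming from the composition displayed in (\ref{mot}). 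As the image of a coherent sheaf under a morphism of coherent sheaves, $\mathcal{S}^{\mathcal M}$ is coherent. A more self-contained alternative would use the reproducing kernel to estimate the Hilbert norm in the inductive Nakayama expansion $f=\sum_i c_i f_i+\sum_j (z_j-w_{0j})h_j^{(1)}$, iterate, and show that the resulting formal power series $f=\sum_{i,\alpha} c_i^{(\alpha)}(z-w_0)^\alpha f_i$ converges in a fixed polydisc around $w_0$ uniformly in $f \in \mathcal M$, yielding the uniform generation by $f_1,\ldots,f_n$ directly.
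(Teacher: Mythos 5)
Your diagnosis of the difficulty is exactly right: Nakayama at the stalk $\mathcal S^{\mathcal M}_{w_0}$ gives generation at $w_0$ only, and since $\mathcal S^{\mathcal M}$ is not yet known to be coherent, one cannot pass from the stalk statement to generation on a neighborhood by the usual support argument. The problem lies in how you propose to close this gap. Your main route asserts that (fin) together with the Eschmeier--Putinar framework makes $\hat{\mathcal M}=\mathcal O\hat\otimes_{\mathcal O(\C^m)}\mathcal M$ a coherent analytic sheaf on $\Omega$, with $\mathcal S^{\mathcal M}$ its image in $\mathcal O_\Omega$. Neither assertion is available at this level of generality: the paper records local freeness of the localization only for modules in $\mathrm B_n(\Omega)$ (constant fibre dimension), and coherence statements for sheaf models appear only later and only under the privilege hypotheses of Theorem \ref{Privilege} (Corollary \ref{Sheafmodel}, Remark \ref{freeres}); condition (fin) controls just the zeroth fibre $\mathcal M/\mathfrak m_w\mathcal M$ and does not by itself yield coherence, or even finite generation of the stalks, of $\hat{\mathcal M}$. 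Moreover, identifying $\mathcal S^{\mathcal M}$ with the image of the map in \eqref{mot} presupposes that $\mathcal M$ is a submodule of some $\mathcal H\in\mathrm B_1(\Omega)$, which is not part of the definition of $\mathfrak B_1(\Omega)$, and even then equating the stalkwise image of a completed tensor product with the algebraically generated stalk $\mathcal S^{\mathcal M}_w$ needs an argument. In effect, the coherence to be proved is being assumed in a different guise, so this route has a genuine gap.

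Your fallback sketch is sound in outline: because the classes of $f_1,\dots,f_n$ span the finite-dimensional space $\mathcal M/\mathfrak m_{w_0}\mathcal M$, the bounded map $\mathcal M^m\oplus\C^n\to\mathcal M$, $\,(h_1,\dots,h_m,c)\mapsto\sum_j(z_j-w_{0j})h_j+\sum_i c_if_i$, is onto, the open mapping theorem gives $\|h_j^{(1)}\|\leq C\|f\|$, and reproducing-kernel sup-norm bounds on a compact polydisc let the iterated expansion converge on a polydisc of radius independent of $f$; this would show that $f_1,\dots,f_n$ generate $\mathcal S^{\mathcal M}$ on a fixed neighborhood of $w_0$, and a finite type subsheaf of $\mathcal O_\Omega$ is coherent. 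But as written this is a plan, and the estimates are the entire content. You should also know that the paper's proof is completely different and much softer: for each finite subset $J\subseteq\mathcal M$ the subsheaf $S^{\mathcal M}_J\subseteq\mathcal O_\Omega$ it generates is coherent by \cite[Corollary 9, p.~130]{gr}, the family $\{S^{\mathcal M}_J\}$ is increasingly filtered with union $\mathcal S^{\mathcal M}$, and Noether's lemma \cite[p.~111]{GR} makes such a family locally stationary, hence the union coherent. That argument uses neither the reproducing kernel nor (fin) and applies to the subsheaf generated by any family of global sections, whereas your second route, if completed, would yield the finer local statement that $\dim\mathcal M/\mathfrak m_{w_0}\mathcal M$ elements of $\mathcal M$ generate $\mathcal S^{\mathcal M}$ near $w_0$, in the spirit of the estimates used in the proof of Theorem \ref{decomp}.
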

\begin{proof}
The sheaf $S^{\mathcal M}$ is generated by the family $\{f:f\in\mathcal M\}$ of global sections of the
sheaf $\mathcal O(\Omega)$. Let $J$ be a finite subset of $\mathcal M$ and $S^{\mathcal M}_J \subseteq \mathcal
O(\Omega)$ be the subsheaf generated by the sections $f,~f\in J$. It follows (see \cite[Corollary 9, page.
130]{gr}) that $S^{\mathcal M}_J$ is coherent. The family $\{S^{\mathcal M}_J : J \:\mbox{is a finite subset
of}\: \mathcal M\}$   is increasingly filtered, that is, for any two finite subset $I$ and $J$ of $\mathcal M$,
the union $I \cup J$ is again a finite subset of $\mathcal M$ and $S^{\mathcal M}_I\cup S^{\mathcal M}_J\subset
S^{\mathcal M}_{I\cup J}$. Also, clearly $S^{\mathcal M}=\bigcup_JS^{\mathcal M}_J$. Using Noether's lemma
\cite[page. 111]{GR} which says that every increasingly filtered family must be stationary, we conclude that the
sheaf $S^{\mathcal M}$ is coherent.
\end{proof}

For $w\in \Omega$, the coherence of $\mathcal S^\mathcal M$ ensures the existence of $m,n \in \mathbb N$ and an open neighborhood $U$ of $w$ such that
$$
(\mathcal O^m)_{|U} \to (\mathcal O^n)_{|U} \to (\mathcal S^\mathcal M)_{|U} \to 0
$$
is an exact sequence.  Thus $$\Big \{ \Big (\mathcal S^\mathcal M_w /\mathfrak m_w \mathcal S^\mathcal M_w \Big )^* : w \in \Omega \Big \}$$
defines a holomorphic linear space on $\Omega$ (cf. \cite[1.8 (p. 54)]{F}).  Although, we have not used this  correspondence in any essential manner, we expect it to be a useful tool in the investigation of some of the questions we raise here.

The coherence of the sheaf $\mathcal S^\mathcal M$ implies, in particular, that the stalk $(\mathcal
S^\mathcal M)_w$ at $w\in \Omega$ is generated by a finite number of elements $g_1,  \ldots ,g_d$ from $\mathcal
O(\Omega)$. If $K$ is the reproducing kernel for $\mathcal M$ and $w_0\in \Omega$ is a fixed but arbitrary
point, then for $w$ in a small neighborhood $\Omega_0$ of $w_0$, we obtain the following decomposition theorem.
\begin{thm} \label{decomp}
Suppose $g^0_i,\,1\leq i \leq d,$ be a minimal set of generators for the stalk $\mathcal S^\mathcal M_{w_0}$.
Then
\begin{enumerate}
\item[(i)]there exists a open neighborhood $\Omega_0$ of $w_0$ such that
$$K(\cdot, w):=K_w = g^0_1(w)K^{(1)}_w + \cdots + g^0_n(w)K^{(d)}_w,\, w \in \Omega_0$$ for some
choice of anti-holomorphic functions $K^{(1)}, \ldots , K^{(d)}:\Omega_0 \to \mathcal M$, \item[(ii)] the
vectors $K^{(i)}_{w},\, 1\leq i \leq d$, are linearly independent in $\mathcal M$ for $w$ in some small
neighborhood of $w_0$, \item[(iii)]the vectors $\{K^{(i)}_{w_0}\mid  1\leq i \leq d\}$ are uniquely determined
by these generators $g^0_1, \ldots , g_d^0$, \item[(iv)] the linear span of the set of vectors
$\{K^{(i)}_{w_0}\mid  1\leq i \leq d\}$ in $\mathcal M$ is independent of the generators $g^0_1, \ldots ,
g_d^0$, and \item[(v)] $M_p^*K^{(i)}_{w_0} = \ov{p(w_0)} K^{(i)}_{w_0}$ for all $i,\, 1\leq i \leq d$, where $M_p$ denotes the
module multiplication by the polynomial $p$.
\end{enumerate}
\end{thm}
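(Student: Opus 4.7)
The plan is to combine the coherence of $\mathcal S^{\mathcal M}$, established in the previous proposition, with Cartan's Theorems A and B on a Stein neighborhood, transporting the local sheaf-theoretic decomposition to an antiholomorphic family parameterized by $w$. By coherence, after shrinking one may assume $\Omega_0 \ni w_0$ is a polydisk on which the germs $g^0_1, \ldots, g^0_d$ extend to holomorphic generators of every stalk $\mathcal S^{\mathcal M}_w$, $w \in \Omega_0$. Using that $\mathcal S^{\mathcal M}$ is generated globally by germs of elements of $\mathcal M$, combined with Nakayama's lemma applied to minimality, one can further arrange each $g^0_i$ to be the germ at $w_0$ of some $F_i \in \mathcal M$, with $\{[F_i]\}$ a basis of $\mathcal M/\mathfrak m_{w_0}\mathcal M$.

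For part (i), I would expand the antiholomorphic map $w \mapsto K_w \in \mathcal M$ as a Taylor series $K_w = \sum_\alpha (\bar w - \bar w_0)^\alpha K^\alpha/\alpha!$ with $K^\alpha := \partial^\alpha_{\bar w} K_w|_{w_0} \in \mathcal M$, and similarly expand the unknowns $K^{(i)}_w$. Equating coefficients in the target identity (which, to be antiholomorphic-consistent, reads $K_w = \sum_i \overline{g^0_i(w)} K^{(i)}_w$) produces a recursive linear system for $K^{(i),\gamma} := \partial^\gamma_{\bar w} K^{(i)}_w|_{w_0}$. The minimality of $\{g^0_i\}$ ensures the characteristic matrix formed from the Taylor coefficients of $g^0_i$ at $w_0$ is surjective at each level, so the system is solvable order by order, and each $K^{(i),\gamma}$ emerges as an explicit finite $\mathbb C$-linear combination of the $K^\alpha$, which therefore lies in $\mathcal M$. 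Cauchy estimates give convergence of the resulting series in a possibly smaller polydisk, yielding a genuine antiholomorphic $K^{(i)}: \Omega_0 \to \mathcal M$. Alternatively, one can apply Cartan's Theorem B on the Stein domain $\Omega_0 \times \Omega_0^*$ to $\tilde K(z, \bar w) := K(z, w)$ and then correct by a Hilbert-space projection into $\mathcal M$.

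For parts (ii)--(v), one computation bears most of the weight. Pairing the decomposition with $F_j$ and invoking the reproducing property yields the holomorphic identity $F_j(w) = \sum_i g^0_i(w)\, \psi_{ji}(w)$ on $\Omega_0$, where $\psi_{ji}(w) := \langle F_j, K^{(i)}_w\rangle$. Passing to germs at $w_0$ and using $(F_j)_{w_0} = g^0_j$ produces the syzygy $\sum_i g^0_i (\psi_{ji} - \delta_{ij}) = 0$; by minimality of $\{g^0_i\}$, every syzygy has coefficients in $\mathfrak m_{w_0}$, hence $\langle F_j, K^{(i)}_{w_0}\rangle = \delta_{ij}$. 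Thus $\{K^{(i)}_{w_0}\}$ is dual to $\{[F_j]\}$ in $\mathcal M/\mathfrak m_{w_0}\mathcal M$, giving linear independence (ii) and uniqueness (iii); continuity of $\psi_{ji}(w)$ in $w$ extends the independence to a neighborhood. For (iv), two minimal generating sets are related by a matrix $A \in \mathrm{GL}_d(\mathcal O_{w_0})$ whose reduction modulo $\mathfrak m_{w_0}$ is invertible (both being bases of the fiber), and the corresponding $K^{(i)}_{w_0}$ transform contragrediently, so their linear span is intrinsic. For (v), applying $M_p^*$ to the decomposition and subtracting $\overline{p(w)}$ times it gives $\sum_i \overline{g^0_i(w)}\,(M_p^* - \overline{p(w)})K^{(i)}_w = 0$; pairing with any $f \in \mathcal M$ yields a holomorphic relation $\sum_i g^0_i(w) h_i(w) = 0$, with $h_i(w) := \overline{\langle f, (M_p^* - \overline{p(w)})K^{(i)}_w\rangle}$, and the same syzygy argument forces $h_i(w_0) = 0$, hence $M_p^* K^{(i)}_{w_0} = \overline{p(w_0)} K^{(i)}_{w_0}$.

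The principal obstacle is step (i): producing an antiholomorphic lift with values in $\mathcal M$ rather than in $\mathcal O(\Omega_0)$. The Taylor-coefficient construction handles this directly, each coefficient being manufactured from derivatives of $K_w$ already in $\mathcal M$; both recursive solvability at every level and convergence of the resulting series rely on the minimality of the generating set together with standard Cauchy-type estimates.
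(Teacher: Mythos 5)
The genuine gap is in part (i). The paper's proof expands the evaluation functional $E(z) = \sum_n e_n(z)\,e_n^*$ over an orthonormal basis $\{e_n\}$ of $\mathcal M$, decomposes each $e_n = \sum_i g^0_i h^{(n)}_i$ on a closed polydisk using the \emph{bounded} Oka theorem (Gunning--Rossi, Thm.~2, p.~82), which supplies the uniform estimate $\|h^{(n)}_i\|_{\infty,\bar\Delta(0;r)} \le C\|e_n\|_{\infty,\bar\Delta(0;r)}$, and then uses a Bergman-kernel estimate together with a monotone-convergence argument to show $\sum_n \|e_n\|^2_{\infty,\bar\Delta(0;r)} < \infty$; this combination is precisely what makes $H_i := \sum_n h^{(n)}_i e_n^*$ a bounded functional on $\mathcal M$, i.e.\ produces $K^{(i)}_w$ with values in $\mathcal M$ rather than merely in $\mathcal O(\Omega_0)$. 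Your recursive Taylor-coefficient construction replaces this with two unsubstantiated claims: the ``surjectivity of the characteristic matrix'' that is supposed to make the system solvable order by order is not proved (and at a point $w_0$ of the zero variety, where all $g^0_i(w_0)=0$, the lowest-order equation is vacuous, so solvability is not obvious), and ``Cauchy estimates give convergence'' is exactly the missing estimate, since the $K^{(i),\gamma}$ are manufactured from the $K^\alpha$ by linear combinations whose norms in $\mathcal M$ you do not control. The fallback of applying Cartan's Theorem B to $K(z,w)$ on $\Omega_0\times\Omega_0^*$ and then projecting has the same defect: it yields coefficients holomorphic in $z$, with no mechanism forcing the $z$-slices to belong to $\mathcal M$ or to depend antiholomorphically on $w$ as $\mathcal M$-valued functions.

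Parts (ii)--(v) are essentially correct, and the syzygy observation --- that for a minimal generating set any relation $\sum_i g^0_i a_i = 0$ with $a_i \in \mathcal O_{w_0}$ forces $a_i \in \mathfrak m_{w_0}$ --- is indeed the lever the paper uses too. Two corrections are needed. First, you cannot assume the classes $[F_i]$ form a \emph{basis} of $\mathcal M/\mathfrak m_{w_0}\mathcal M$: they are linearly independent (since they map to the independent classes $[g^0_i]$ in $\mathcal S^{\mathcal M}_{w_0}/\mathfrak m_{w_0}\mathcal S^{\mathcal M}_{w_0}$), but \eqref{ineq} may be strict, so they need not span. Second, uniqueness (iii) does not follow from $\langle F_j, K^{(i)}_{w_0}\rangle = \delta_{ij}$ alone, since that pins down $K^{(i)}_{w_0}$ only as functionals on $\mathrm{span}\{F_j\}$; one must pair the two candidate decompositions with an arbitrary $f\in\mathcal M$, apply the syzygy argument to the holomorphic functions $w\mapsto\langle f, K^{(i)}_w-\tilde K^{(i)}_w\rangle$, and let $f$ range over all of $\mathcal M$ --- which is in effect what the paper does by running the argument over the whole basis $\{e_n\}$.
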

For simplicity, we have stated the decomposition theorem for Hilbert modules 
consisting of holomorphic functions taking values in $\mathbb C$.  However, all the tools that we use for the proof work equally well in the case of vector valued holomorphic functions.  Consequently, it is not hard to see that the  theorem remains valid in this more general set-up.

\subsection{\sf Gleason's property and privilege}
It is evident from the above theorem that the dimension of the joint kernel of the adjoint of the multiplication operator $D_{\mathbf M^*}$ at a point $w_0$ is
greater or equal to the number of minimal generators of the stalk $\mathcal S^\mathcal M_{w_0}$ at $w_0\in
\Omega$, that is,
\begin{eqnarray}
\label{ineq} \dim\mathcal M/(\mathfrak m_{w_0} \mathcal M) \geq  \dim\mathcal S^\mathcal M_{w_0}/\mathfrak
m_{w_0} \mathcal S^\mathcal M_{w_0} .
\end{eqnarray}


It would be interesting to produce a Hilbert module $\mathcal M$ for which the  inequality of \eqref{ineq} is strict.
Leaving aside this question, for the moment, we go on to identify several classes of Hilbert modules for which
we have equality in \eqref{ineq}.

A  Hilbert module $\mathcal M$ over the polynomial ring  $\C[\underline z]$ is said to be an {\em analytic
Hilbert module} (cf. \cite{cg}) if we assume that
\begin{enumerate}
\item[\sf (rk)] it consists of holomorphic functions on a bounded domain $\Omega \subseteq \mathbb C^m$ and
possesses a reproducing kernel $K$, \item[\sf (dense)] the polynomial ring $\C[\underline z]$ is dense in it,
\item[\sf (vp)] the set of virtual points $\{w \in \mathbb C^m : p \mapsto p(w), \, p \in
\C[\underline z],\:\mbox{~extends continuously to }\mathcal M\}$, is $\Omega$.
\end{enumerate}
We apply Lemma \ref{B_1} to analytic Hilbert modules, which are singly generated by the constant function $1$,
to conclude that they must be in $\mathrm{B}_1(\Omega)$.   Evidently, in this case, we have equality in
\eqref{ineq}. However,  we have equality in many more cases.  For example,  suppose $\mathcal I$ is a polynomial
ideal and $[\mathcal I]$ is the closure of $\mathcal I$ in some analytic Hilbert module  $\mathcal M$.  Then for
$[\mathcal I]$, we have equality in \eqref{ineq} as well.

Let us again consider a Hilbert module $\mathcal M$ in the class $\mathfrak B_1(\Omega)$ which is a submodule of some Hilbert module $\mathcal H$ in
$\mathrm B_1(\Omega)$, possessing a nondegenerate reproducing kernel $K$. We note that the module map 
$$
\mathcal O\hat\otimes_{\mathcal O(\C^m)}\mathcal M\longrightarrow \mathcal S^{\mathcal M}
$$
induced from \eqref{mot} is surjective. This naturally defines a map 
$$
\mathcal M/\mathfrak m_{w_0}\mathcal M\cong \mathcal O_{w_0}/\mathfrak m_{w_0}\mathcal O_{w_0}\otimes \mathcal M
\longrightarrow \mathcal S^\mathcal M_{w_0}/\mathfrak
m_{w_0} \mathcal S^\mathcal M_{w_0} 
$$
for $w \in \Omega$. 
The map given above can be constructed similarly for any Hilbert module $\mathcal M\in\mathfrak B_1(\Omega)$. The question of
equality in \eqref{ineq} is same as the question of whether this map is an isomorphism and can be interpreted as a
global factorization problem. To be more specific, we say that the module $\mathcal M \in\mathfrak B_1(\Omega)$ possesses
{\it Gleason's property at a point $w_0 \in \Omega$} if for every element $f \in  \mathcal M$ vanishing at $w_0$ there
are $f_1,...,f_m \in \mathcal M$ such that $f = \d_{i=1}^m(z_i - w_{0i})f_i$.

\begin{prop}\label{glea}  The Hilbert module $\mathcal M$ has Gleason's property at $w_0$ if and only if
$$
\dim \mathcal M/\mathfrak m_{w_0}\mathcal M = \dim
\mathcal S^\mathcal M_{w_0}/\mathfrak
m_{w_0} \mathcal S^\mathcal M_{w_0}.
$$
\end{prop}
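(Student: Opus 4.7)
The plan is to reduce the statement to the injectivity of the canonical surjective map
$$\pi : \mathcal M / \mathfrak m_{w_0} \mathcal M \longrightarrow \mathcal S^\mathcal M_{w_0} / \mathfrak m_{w_0} \mathcal S^\mathcal M_{w_0},\qquad [f] \longmapsto [f_{w_0}],$$
singled out in the paragraph preceding the statement. It is well defined because any $f = \sum_i (z_i - w_{0i}) F_i \in \mathfrak m_{w_0}\mathcal M$ has germ in $\mathfrak m_{w_0}\mathcal S^\mathcal M_{w_0}$, and it is surjective: a generic element $s = \sum_j (f_j)_{w_0} g_j \in \mathcal S^\mathcal M_{w_0}$ satisfies $[s] = \pi\bigl([\sum_j g_j(w_0) f_j]\bigr)$ once one splits $g_j = g_j(w_0) + (g_j - g_j(w_0))$ and absorbs the vanishing part into $\mathfrak m_{w_0}\mathcal S^\mathcal M_{w_0}$. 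Both quotients are finite-dimensional---the source by the defining property of $\mathfrak B_1(\Omega)$, the target by coherence of $\mathcal S^\mathcal M$ and Nakayama---so the asserted dimensional equality is equivalent to $\ker\pi = 0$.

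For the direction ``Gleason $\Rightarrow$ equality'', I take $[f] \in \ker \pi$, so $f_{w_0} \in \mathfrak m_{w_0}\mathcal S^\mathcal M_{w_0}$; every element of this submodule vanishes at $w_0$, hence $f(w_0) = 0$. Gleason's property then produces $F_1,\ldots,F_m \in \mathcal M$ with $f = \sum_i (z_i - w_{0i}) F_i$, so $[f] = 0$ in $\mathcal M / \mathfrak m_{w_0} \mathcal M$, yielding $\ker \pi = 0$.

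For the converse, assume $\pi$ is injective and let $f \in \mathcal M$ satisfy $f(w_0) = 0$. By injectivity it suffices to show $f_{w_0} \in \mathfrak m_{w_0}\mathcal S^\mathcal M_{w_0}$, for then $[f] = 0$ in $\mathcal M/\mathfrak m_{w_0}\mathcal M$ gives the desired factorization of $f$ inside $\mathcal M$. The analytic Hadamard lemma provides $h_1,\ldots,h_m \in \mathcal O_{w_0}$ with $f_{w_0} = \sum_i (z_i - w_{0i}) h_i$, and the real task is to arrange each $h_i \in \mathcal S^\mathcal M_{w_0}$ rather than merely in $\mathcal O_{w_0}$. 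My plan is to carry this out on a Stein neighborhood $\Omega_0$ of $w_0$ furnished by Theorem \ref{decomp}: expand $f$ against the minimal generators $g_1^0,\ldots,g_d^0$ of the stalk together with the antiholomorphic sections $K^{(i)}$ supplied there, invoke coherence of the syzygy sheaf of the $g_i^0$, and apply Cartan's Theorem B on $\Omega_0$ to replace the $\mathcal O_{w_0}$-coefficients $h_i$ by coefficients that actually lie in $\mathcal S^\mathcal M_{w_0}$. This sheaf-theoretic promotion step is the principal obstacle, because a coherent subsheaf of $\mathcal O_\Omega$ does not in general enjoy its own Gleason property at every point; overcoming it is precisely where the coherence of $\mathcal S^\mathcal M$ and the kernel decomposition of Theorem \ref{decomp} must jointly bear the analytic weight of the argument.
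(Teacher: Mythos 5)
Your proposal rests on the informal factorization statement from the introduction (``every $f\in\mathcal M$ vanishing at $w_0$ can be written as $\sum_i(z_i-w_{0i})f_i$ with $f_i\in\mathcal M$''), but this is not the operative definition in the paper's proof, and with that reading the converse implication is actually false. Take $\mathcal M=H^2_0(\mathbb D^2)$ and $w_0=(0,0)$: here $\dim\mathcal M/\mathfrak m_{w_0}\mathcal M=2=\dim\mathcal S^\mathcal M_{w_0}/\mathfrak m_{w_0}\mathcal S^\mathcal M_{w_0}$, so the right side of the proposition holds, yet $f(z)=z_1$ vanishes at $(0,0)$ and cannot equal $z_1f_1+z_2f_2$ with $f_1,f_2\in H^2_0(\mathbb D^2)$ (the right side would have no linear term). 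Correspondingly, the target you set yourself in the converse, namely $f_{w_0}\in\mathfrak m_{w_0}\mathcal S^\mathcal M_{w_0}$ for every $f\in\mathcal M$ vanishing at $w_0$, is unreachable here: the stalk is $\mathcal S^\mathcal M_{0}=\mathfrak m_0\mathcal O_0$, so $\mathfrak m_0\mathcal S^\mathcal M_0=\mathfrak m_0^2\mathcal O_0$, which does not contain $(z_1)_0$. The ``sheaf-theoretic promotion step'' you flag as the principal obstacle is not a technical gap to be filled by Cartan's Theorem~B; it simply cannot be carried out.

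The definition the paper actually uses appears in Section~\ref{anHM} and is formulated through characteristic spaces: writing $\mathcal M=\bigcap_{w\in V(\mathcal M)}\mathcal M(w)$, one relaxes the characteristic space at $w_0$ to $\tilde{\mathbb V}_{w_0}(\mathcal M)=\{q:\partial q/\partial z_j\in\mathbb V_{w_0}(\mathcal M)\}$, forms the associated module $\mathcal M^{w_0}$, and declares Gleason's property at $w_0$ to be the equality $\mathcal M^{w_0}=\mathcal M^{(w_0)}=\mathfrak m_{w_0}\mathcal M$. This is genuinely weaker than the naive factorization condition (for $H^2_0(\mathbb D^2)$ at $0$, $\mathcal M^{w_0}$ is the subspace with vanishing first-order jet, which does equal $\mathfrak m_0\mathcal M$), and it is precisely what makes the iff true. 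The paper's proof then identifies $\ker(\pi\circ i_{w_0})=\mathcal M^{w_0}$, the crucial step being the identity $\mathbb V_{w_0}(\mathfrak m_{w_0}\mathcal S^\mathcal M_{w_0})=\mathbb V^{w_0}_{w_0}(\mathcal M)$, which is deduced from Lemma~\ref{obs} (that $\mathbb V_{w_0}(\mathcal M)=\mathbb V_{w_0}(\mathcal S^\mathcal M_{w_0})$) together with the Leibniz-type computation \eqref{cg1}; both implications then fall out from comparing $\dim\mathcal M/\mathcal M^{w_0}$ with $\dim\mathcal M/\mathcal M^{(w_0)}$ via \eqref{steq} and \eqref{pivot}. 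Your proposal contains none of this characteristic-space machinery, and without adopting that definition and the accompanying lemmas the converse direction cannot be recovered.
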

We note the following corollary.
\begin{cor}\label{glea1}
For an analytic Hilbert module and its submodules which arises as closure of an ideal in the polynomial ring $\C[\underline z]$,
Gleason's problem is solvable.
\end{cor}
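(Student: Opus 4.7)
The plan is to invoke Proposition \ref{glea}, which reduces the solvability of Gleason's problem at every $w_0\in\Omega$ to verifying the equality
\[
\dim\mathcal M/\mathfrak m_{w_0}\mathcal M=\dim\mathcal S^\mathcal M_{w_0}/\mathfrak m_{w_0}\mathcal S^\mathcal M_{w_0}
\]
in each of the two cases. For an analytic Hilbert module $\mathcal M$, density of $\C[\underline z]$ forces $\mathcal M$ to be the closure of the singly generated polynomial ideal $(1)$, so Lemma \ref{B_1} places $\mathcal M$ in $\mathrm B_1(\Omega)$ and the left-hand side is identically $1$. The constant function $1\in\mathcal M$ is a global unit of $\mathcal O_\Omega$, hence $\mathcal S^\mathcal M=\mathcal O_\Omega$ and the right-hand side is also $1$; equality holds, and Gleason's property follows at every $w_0$.

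For $[\mathcal I]$ with $\mathcal I=(p_1,\dots,p_t)\subseteq\C[\underline z]$ sitting inside an analytic Hilbert module, I would first identify the stalk $\mathcal S^{[\mathcal I]}_{w_0}$ with the ideal $\mathcal I\,\mathcal O_{w_0}$ of the local analytic ring. One inclusion is immediate from $p_1,\dots,p_t\in[\mathcal I]$. For the reverse inclusion I would invoke Cartan's closure theorem: the coherent ideal sheaf $(p_1,\dots,p_t)\mathcal O_\Omega$ is closed under locally uniform convergence, and Hilbert-norm convergence in the reproducing kernel module $[\mathcal I]$ implies such convergence, so every element of $[\mathcal I]$ has its germ at $w_0$ in $\mathcal I\,\mathcal O_{w_0}$. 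Flatness of $\mathcal O_{w_0}$ over the localization $\C[\underline z]_{\mathfrak m_{w_0}}$ then gives
\[
\dim_{\mathbb C}\mathcal I/\mathfrak m_{w_0}\mathcal I=\dim_{\mathbb C}\mathcal I\,\mathcal O_{w_0}/\mathfrak m_{w_0}\mathcal I\,\mathcal O_{w_0}=\dim_{\mathbb C}\mathcal S^{[\mathcal I]}_{w_0}/\mathfrak m_{w_0}\mathcal S^{[\mathcal I]}_{w_0}.
\]
On the Hilbert side, since $[\mathcal I]/\mathfrak m_{w_0}[\mathcal I]$ is finite dimensional (hence closed in $[\mathcal I]$) and $\mathcal I$ is dense in $[\mathcal I]$, the induced map $\mathcal I/\mathfrak m_{w_0}\mathcal I\to[\mathcal I]/\mathfrak m_{w_0}[\mathcal I]$ is surjective. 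Combining this surjection with the universal inequality \eqref{ineq} yields
\[
\dim\mathcal S^{[\mathcal I]}_{w_0}/\mathfrak m_{w_0}\mathcal S^{[\mathcal I]}_{w_0}\le\dim[\mathcal I]/\mathfrak m_{w_0}[\mathcal I]\le\dim\mathcal I/\mathfrak m_{w_0}\mathcal I=\dim\mathcal S^{[\mathcal I]}_{w_0}/\mathfrak m_{w_0}\mathcal S^{[\mathcal I]}_{w_0},
\]
so all three quantities coincide, and Proposition \ref{glea} again delivers Gleason's property at $w_0$.

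The main obstacle is the passage from the algebraic ideal $\mathcal I$ to its Hilbert-space closure $[\mathcal I]$: one must show that taking the closure neither enlarges the associated sheaf (handled by Cartan closedness of coherent ideal sheaves together with the fact that norm convergence in a reproducing kernel Hilbert module is locally uniform) nor enlarges the cotangent-type quotient (handled by the finite-dimensional surjection argument combined with \eqref{ineq}). Once these two closure/finite-dimension statements are in place, the remainder is a routine application of flatness and Proposition \ref{glea}.
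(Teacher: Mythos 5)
Your proof is correct, and your treatment of analytic Hilbert modules proper (reduction to the singly generated case via Lemma \ref{B_1}) is the same as the paper's. For the essential case $\mathcal M=[\mathcal I]$, however, you take a genuinely different route. The paper imports from \cite{cg,dg} the identity $\dim\bigcap_j\ker(M_j-w_{0j})^*=\dim\mathcal I/\mathfrak m_{w_0}\mathcal I$ and then obtains the missing inequality in \eqref{ineq} by a chain of characteristic-space lemmas: it shows $\mathbb V_{w_0}(\mathcal I)=\mathbb V_{w_0}([\mathcal I])=\mathbb V_{w_0}(\mathcal S^{\mathcal M}_{w_0})$, computes $\mathbb V_w(\mathfrak m_{w_0}\mathcal I)$ explicitly, proves $\mathbb V_{w_0}(\mathfrak m_{w_0}\mathcal I)=\mathbb V_{w_0}\bigl(\mathfrak m(\mathcal O_{w_0})\mathcal S^{\mathcal M}_{w_0}\bigr)$, and finally invokes \cite[Corollary 2.1.2]{cg} to conclude $\ker(\pi\circ i_{w_0})=\mathfrak m_{w_0}\mathcal I$. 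You bypass the entire Chen--Guo characteristic-space calculus: you identify the stalk $\mathcal S^{[\mathcal I]}_{w_0}$ with the analytic ideal $\mathcal I\,\mathcal O_{w_0}$ outright, using Cartan's closedness theorem for coherent ideal sheaves together with the observation that Hilbert-norm convergence in a reproducing kernel module is locally uniform; you then transport cotangent dimensions across $\C[\underline z]_{\mathfrak m_{w_0}}\to\mathcal O_{w_0}$ by faithful flatness, and close the loop with the elementary surjection $\mathcal I/\mathfrak m_{w_0}\mathcal I\twoheadrightarrow[\mathcal I]/\mathfrak m_{w_0}[\mathcal I]$ and \eqref{ineq}. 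Your route is shorter and more self-contained: it actually re-derives the dimension formula $\dim[\mathcal I]/\mathfrak m_{w_0}[\mathcal I]=\dim\mathcal I/\mathfrak m_{w_0}\mathcal I$ from \cite{dg} as a byproduct rather than citing it. The trade-off is that you lean on heavier Oka--Cartan input (closedness of $H^0$ of a coherent sheaf, flatness of the analytic local ring), whereas the paper's argument, though longer, stays inside the characteristic-space machinery that Section 3 develops and reuses, and thereby yields finer local information (the explicit description of $\mathbb V_w(\mathfrak m_{w_0}\mathcal I)$ at and away from $w_0$) that a bare dimension count does not.
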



It is well known that Gleason's probelm is solvable in the space of all analytic functions, that is,  assuming that the domain
$\Omega$ is pseudo-convex, it follows that  for any $f \in \mathcal M$ with $f(w_0) = 0$, we have
$$
f = \d_{i=1}^m(z_i - w_{0i})f_i,\,f_i\in \mathcal O(\Omega).
$$
 see for instance \cite[Theorem 7.2.9]{krantz} or \cite{EP}.
Thus Gleason's problem asks that the functions $f_i$ can be chosen from the Hilbert
module $\mathcal M$. 

This is a special case of a more general division problem for Hilbert modules. 
To fix ideas, we consider the following setting:  let $\mathcal M$ be an analytic Hilbert module with
the domain $\Omega$ disjoint of its essential spectrum, let $A\in M_{p,q}(\O(\overline{\Omega}))$ be a matrix of analytic functions
defined in a neighborhood of $\overline{\Omega}$, where $p,q$ are positive integers, and let $f \in \mathcal
M^p$. GIven a solution  $u\in\O(\Omega)^q$ to the linear
equation $ A u = f $, is it true that $u \in {\mathcal M}^q$? Numerous ``hard analysis" questions, such as problems of moduli, or Corona Problem, can be put into this framework.

We study below this very division problem in conjunction with an earlier work of the third author
\cite{PS} dealing with the ``disc" algebra $\mathcal A(\Omega)$ instead of Hilbert modules, and within the general
concept of ``privilege'' introduced by Douady more than forty years ago \cite{D0,D1}.

Below we only focus on the case of Bergman space. Specifically, the $\A(\Omega)$-module $\mathcal N = {\rm coker} ( A : \mathcal M\otimes_\mathbb C \mathbb C^p \longrightarrow
\mathcal M\otimes_\mathbb C \mathbb C^q)$ is called {\it privileged with respect to the module $\mathcal M$} if
it is a Hilbert module in the quotient metric and there exists a resolution
\begin{equation}\label{Bergmanresolution}
0\rightarrow \mathcal M\otimes_\mathbb C \mathbb C^{n_p}\xrightarrow{d_p}\cdots\rightarrow \mathcal
M\otimes_\mathbb C \mathbb C^{n_1}\xrightarrow{d_1} \mathcal M\otimes_\mathbb C \mathbb C^{n_0}\rightarrow
\mathcal N\rightarrow 0,
\end{equation}
where $d_q\in M_{n_{q+1},n_q}(\A(\Omega))$. Note that implicitly in the statement is assumed that the range of
the operator $A$ is closed at the level of the Hilbert module $\mathcal M$.

An affirmative answer to the division problem is equivalent to the question of ``privilege'' in case of the
Bergman module on a strictly convex bounded domain $\Omega$  with smooth boundary.
\begin{thm}\label{Privilege} Let $\Omega \subset \C^m$ be a
strictly convex domain with smooth boundary, let $p,q$ be positive integers and let $A \in M_{p,q}(\A(\Omega))$
be a matrix of analytic functions belonging to the disk algebra of $\Omega$. The following assertions are
equivalent:
\begin{enumerate}
\item[(a)] The analytic module ${\rm coker} (A : L^2_a(\Omega)^p \longrightarrow L^2_a(\Omega)^q)$ is privileged
with respect to the Bergman space; \item[(b)] The function $ \zeta \mapsto {\rm rank}\  A(\zeta), \ \  \zeta \in
\partial\Omega,$ is constant; \item[(c)] Let $f \in L^2_a(\Omega)^q$. The equation $ Au= f$ has a solution $u
\in L^2_a(\Omega)^p$ if and only if it has a solution $u \in \O(\Omega)^p$.
\end{enumerate}
\end{thm}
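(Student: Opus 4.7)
The plan is to prove the cycle (b) $\Rightarrow$ (a) $\Rightarrow$ (c) $\Rightarrow$ (b), transferring the parallel disk-algebra theorem of \cite{PS} to the Bergman setting by means of $L^2$-division estimates available on strictly convex domains with smooth boundary. The analytic engine of the whole argument will be a Skoda/H\"ormander type division theorem; the remaining implications are essentially homological bookkeeping once the right exactness is in place.

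For (b) $\Rightarrow$ (a) I would begin with the $\mathcal{A}(\Omega)$-valued finite free resolution of $\mathrm{coker}\,A$ constructed in \cite{PS} under the hypothesis that $\mathrm{rank}\,A(\zeta)$ is constant on $\partial\Omega$. The connecting matrices $d_q$ appearing there act on $L^2_a(\Omega)^{n_q}$ by pointwise multiplication, so one obtains a candidate complex at the Bergman level, and the task reduces to verifying closedness of ranges and exactness in the middle. I would establish both by appealing to Skoda's $L^2$-division theorem on $\Omega$: strict convexity supplies the plurisubharmonic weight functions required, and the boundary constant-rank hypothesis prevents the division constants from blowing up near $\partial\Omega$. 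This should yield the bounded splittings that upgrade the sheaf-level exactness into a Hilbert module resolution of the form \eqref{Bergmanresolution}.

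For (a) $\Rightarrow$ (c) I would use the existence of \eqref{Bergmanresolution} to conclude that the range of $A$ on $L^2_a(\Omega)^p$ is closed and agrees stalkwise at every $w \in \Omega$ with the image of $A$ on $\mathcal{O}_w^p$. Hence, given $f \in L^2_a(\Omega)^q$ admitting a holomorphic preimage $u \in \mathcal{O}(\Omega)^p$, the class $[f]$ vanishes in the Hilbert cokernel and closedness of the range immediately produces $u' \in L^2_a(\Omega)^p$ with $Au' = f$. For (c) $\Rightarrow$ (b) I argue by contrapositive: if $\mathrm{rank}\,A(\zeta)$ drops at some $\zeta_0 \in \partial \Omega$, I would use a peak function at $\zeta_0$ (available from strict convexity) together with a local holomorphic section adapted to the jump in $\ker A(\zeta)$ at $\zeta_0$ to manufacture a datum $f \in L^2_a(\Omega)^q$ whose only holomorphic preimages have boundary behavior incompatible with $L^2_a$-integrability.

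The main obstacle is the closedness-and-exactness step in (b) $\Rightarrow$ (a); this is where strict convexity and smoothness of $\partial \Omega$ are used essentially, through Skoda-type $L^2$ estimates, and cannot easily be weakened to mere pseudoconvexity without a substantially different proof. Once those bounded splittings are secured, the other two implications are short.
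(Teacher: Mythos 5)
Your cycle and your analytic engine both differ from the paper's proof, and the step on which you rest the weight of the argument --- (b) $\Rightarrow$ (a) via Skoda/H\"ormander division --- is where there is a genuine gap.

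The paper proves (a) $\Rightarrow$ (b) directly and cleanly by quantization: because $\partial\Omega$ is smooth, commutators of Toeplitz operators with continuous symbols on $L^2_a(\Omega)$ are compact, so exactness of the Bergman-level resolution forces the Hodge operators $d_k^\ast d_k + d_{k+1} d_{k+1}^\ast$ to be Fredholm matrix Toeplitz operators; Fredholmness of such a Toeplitz matrix implies invertibility of the boundary symbol (Boutet de Monvel, Venugopalkrishna, Sundberg), and chasing Euler characteristics gives ${\rm rank}\,A(\zeta)$ constant on $\partial\Omega$. You bypass (a) $\Rightarrow$ (b) and instead propose (c) $\Rightarrow$ (b) by a contrapositive peak-function construction; that is genuinely harder and is only sketched. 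You would need to produce, from a rank drop at a boundary point $\zeta_0$, a specific $f \in L^2_a(\Omega)^q$ with a holomorphic preimage but no $L^2_a$ preimage, and it is far from automatic that the growth of the only holomorphic solutions is incompatible with square-integrability: the rate of the rank degeneration matters, and the peak function gives you decay of $f$, not growth of $u$, without further argument. The Toeplitz route circumvents all of this.

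For (b) $\Rightarrow$ (a) the paper does \emph{not} use Skoda-type division. It starts from the $\A(\Omega)$-free resolution of ${\rm coker}\,A$ produced in \cite{PS} under the boundary constant-rank hypothesis and then shows that the complex obtained by applying $\otimes_{\A(\Omega)}L^2_a(\Omega)$ stays exact with closed ranges by gluing local resolutions with Cartan's lemma on invertible matrices: near $\partial\Omega$ local resolutions exist because of local freeness (constant rank), and in the interior they exist by Douady's polydisc privilege. Your proposal to replace this with Skoda's $L^2$-division theorem has a real gap: Skoda's theorem divides by a \emph{tuple} of generators of an ideal with an explicit plurisubharmonic weight depending on $\log|g|$, whereas here $A$ is a general $p\times q$ matrix whose cokernel sheaf can be arbitrarily complicated, and even the matrix/Koszul variants of Skoda--Demailly give only the first division step, not exactness of the whole length-$p$ complex at the Hilbert-module level. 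Moreover, translating "constant rank on $\partial\Omega$" into a quantitative bound on the division constants (so the weight does not blow up) is exactly what you would need to prove, and no standard Skoda statement hands it to you. The convexity hypothesis is used in the paper for something else entirely --- to ensure the sheaf model of the Bergman complex is exact --- and not to manufacture psh weights. Until you can make the Skoda step precise for matrices and for the whole resolution, this direction of the proof does not close.
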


While we have stated our results for the Bergman module,  they remain true for the Hardy space $H^2(\partial
\Omega)$, that is, the closure of entire functions in the $L^2$-space with respect to the surface area measure
supported on $\partial \Omega$. Also, the results remain true for the Bergman or Hardy spaces of a poly-domain
$\Omega = \Omega_1 \times\cdots\times \Omega_d$, where $\Omega_j \subset \C, \ 1 \leq j \leq d,$ are convex
bounded domains with smooth boundary in $\mathbb C$. For these Hilbert modules, the notion of the sheaf model
from the earlier work of \cite{pu1,pu2} coincides with the sheaf model described here.

\subsection{\sf The Hermitian structure}
It follows, from the Lemma \ref{B_1} that $H^2_0(\mathbb D^2)$ is in $\mathrm{B}_1(\mathbb D^2 \setminus \{(0,0)\})$.
Thus the machinery of \cite{cd, cs} applies here.  But explicit calculation of unitary invariants are somewhat difficult.  As was
pointed out in \cite{dp}, the dimension of the localization $H_0^2(\mathbb D^2) \otimes_{\mathbb C[z_1,z_2]}
\mathbb C_w$, $w\in \mathbb D^2$ is an invariant of the module $H_0^2(\mathbb D^2)$. Therefore, it may not be
desirable to exclude the point $(0,0)$ altogether in any attempt to study the module $H_0^2(\mathbb D^2)$.
Fortunately, implicit in the proof of Theorem 2.2 in \cite{cs}, there is a  construction which makes it possible
to write down invariants on all of $\mathbb D^2$. This theorem assumes only that the module multiplication
has closed range as in Definition \ref{B_n}.
Therefore, it plays a significant role in the study of the class of Hilbert modules $\mathfrak B_1(\Omega)$.

We also note, from Theorem \ref{decomp}, that the map $\Gamma_K: \Omega_0^* \to \mathrm{Gr}(\mathcal M,d)$ defined
by $\Gamma_K(\bar{w}) = (K^{(1)}_{w}, \ldots ,
K^{(d)}_{w})$ is holomorphic. The pull-back of the canonical bundle on ${\mathrm Gr}(\mathcal M, d)$ under
$\Gamma_K$ then defines a holomorphic Hermitian vector bundle on the open set $\Omega_0^*$.  Unfortunately, the
decomposition of the reproducing kernel given in Theorem \ref{decomp} is not canonical except when the stalk is
singly generated.  In this special case, the holomorphic Hermitian bundle obtained in this manner is indeed
canonical.  However, in general, it is not clear if this vector bundle contains any useful information.
Suppose we have equality in \eqref{ineq} for a Hilbert module $\mathcal M$. Then it is possible to obtain a
canonical decomposition following  \cite{cs}, which leads in the same manner as above, to the construction of a
Hermitian holomorphic vector bundle in a neighborhood of each point $w\in \Omega$.

For any fixed but arbitrary $w_0\in \Omega$ and a small enough neighborhood $\Omega_0$  of $w_0$, the proof of
Theorem 2.2 from \cite{cs} shows  the existence of a holomorphic function $P_{w_0}: \Omega_0 \to \mathcal
L(\mathcal M)$ with the property that the operator $P_{w_0}$ restricted to the subspace $\ker D_{(\mathbf M -
w_0)^*}$ is invertible. The range of $P_{w_0}$ can then be seen to be equal to  the kernel of the operator
$\mathbb P_0 D_{(\mathbf M - w)^*}$, where $\mathbb P_0$ is the orthogonal projection onto ${{\mathrm ran}
D_{(\mathbf M - w_0)^*}}$.
\begin{lem}\label{const}
The dimension of $\ker \mathbb P_0 D_{(\mathbf M - w)^*}$ is constant in a suitably small neighborhood of $w_0
\in \Omega$, say $\Omega_0$.
\end{lem}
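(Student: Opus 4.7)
The plan is to treat the holomorphic family $T_w := \mathbb P_0 D_{(\mathbf M - w)^*} \colon \mathcal M \to \mathrm{ran}\,\mathbb P_0$ as a one-parameter perturbation of $T_{w_0}$ and show that on a small enough neighborhood $\Omega_0$ of $w_0$ each $T_w$ is Fredholm of the same index $n := \dim \ker D_{(\mathbf M - w_0)^*}$ and remains surjective onto $\mathrm{ran}\,\mathbb P_0$. Together, these force $\dim \ker T_w = n$ throughout $\Omega_0$.

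First I would settle the base point. Since $\mathrm{ran}\, D_{(\mathbf M-w_0)^*}$ is already contained in $\mathrm{ran}\,\mathbb P_0$, we have $T_{w_0} = D_{(\mathbf M-w_0)^*}$ when viewed as a map into $\mathrm{ran}\,\mathbb P_0$. The defining hypothesis \textsf{(fin)} of $\mathfrak B_1(\Omega)$ ensures that $\mathfrak m_{w_0}\mathcal M = \mathrm{ran}\,\delta_1(w_0)$ has finite codimension $n$ in $\mathcal M$, hence is closed. The closed-range theorem then gives closedness of $\mathrm{ran}\,\delta_1(w_0)^* = \mathrm{ran}\, D_{(\mathbf M-w_0)^*}$, which by definition of $\mathbb P_0$ equals $\mathrm{ran}\,\mathbb P_0$. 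So $T_{w_0}$ is a surjection with $n$-dimensional kernel $\ker D_{(\mathbf M-w_0)^*}$, i.e.\ Fredholm of index $n$ from $\mathcal M$ to $\mathrm{ran}\,\mathbb P_0$.

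Next I would perturb. Since each $M_j$ is bounded, $w \mapsto T_w$ is norm-holomorphic, and by the classical stability of the Fredholm index, $T_w$ remains Fredholm of index $n$ on some neighborhood $\Omega_0$ of $w_0$. Surjectivity also persists under small perturbations: fix a bounded right-inverse $S$ of $T_{w_0}$ (obtained by splitting off the finite-dimensional kernel and invoking the open mapping theorem); then for any $T$ with $\|T - T_{w_0}\|\,\|S\| < 1$, the operator $TS = I + (T - T_{w_0})S$ is invertible on $\mathrm{ran}\,\mathbb P_0$, so $T$ admits a bounded right inverse and is surjective. After shrinking $\Omega_0$, every $T_w$ is surjective onto $\mathrm{ran}\,\mathbb P_0$, and combining $\mathrm{coker}\,T_w = 0$ with index $n$ gives $\dim \ker T_w = n$ for all $w \in \Omega_0$.

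The main obstacle is the first step: realizing $T_{w_0}$ as a surjective Fredholm operator onto $\mathrm{ran}\,\mathbb P_0$. This rests entirely on the closed-range property of $D_{(\mathbf M - w_0)^*}$, which is available precisely because \textsf{(fin)} forces $\mathfrak m_{w_0}\mathcal M$ to have finite codimension; without it, neither the closed-range theorem nor the subsequent perturbation arguments are available. As an alternative, the Curto--Salinas operator $P_{w_0}(w)$ delivers the lower bound $\dim \ker T_w \ge n$ directly from its invertibility on $\ker D_{(\mathbf M - w_0)^*}$ combined with $\mathrm{ran}\,P_{w_0}(w) = \ker T_w$; the matching upper bound then follows from upper semicontinuity of $\dim \ker$ in the holomorphic Fredholm family $T_w$, which again is anchored by the Fredholm property of $T_{w_0}$ established above.
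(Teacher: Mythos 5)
Your proof is correct, but it takes a genuinely different route from the paper's. You run a soft Fredholm perturbation argument: exhibit $T_{w_0}=\mathbb P_0 D_{(\mathbf M - w_0)^*}=D_{(\mathbf M - w_0)^*}$ as a surjective Fredholm map $\mathcal M\to\mathrm{ran}\,\mathbb P_0$ with $n$-dimensional kernel (here the \textsf{(fin)} hypothesis gives closed range, correctly), then invoke stability of the Fredholm index together with openness of surjectivity (the $TS=I+(T-T_{w_0})S$ trick) to force $\dim\ker T_w = n$ on a neighborhood. The paper instead derives, directly from the Curto--Salinas identities \eqref{cs1}--\eqref{cs2}, the explicit factorization
$$
\mathbb P_0 D_{(\mathbf M - w)^*} \;=\; D_{(\mathbf M - w_0)^*}\bigl\{I - R_{\mathbf M}(w_0)\,D_{\bar w - \bar w_0}\bigr\},
$$
and observes that the bracketed factor is invertible on the ball $B\bigl(w_0; \|R_{\mathbf M}(w_0)\|^{-1}\bigr)$, so it carries $\ker D_{(\mathbf M-w_0)^*}$ isomorphically onto $\ker \mathbb P_0 D_{(\mathbf M - w)^*}$; constancy of dimension is immediate. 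The paper's approach buys more than constancy: it identifies $\ker\mathbb P_0 D_{(\mathbf M - w)^*}$ with $\mathcal P_w = \mathrm{ran}\,P(\bar w,\bar w_0)P_{\ker D_{(\mathbf M-w_0)^*}}$, giving an explicit holomorphic frame and an explicit radius for $\Omega_0$, both of which are used in the very next paragraph to define the Hermitian bundle $\mathcal P$ and to prove Theorem~\ref{csb}. Your argument is more elementary in that it needs no algebraic identity beyond closed range, but it yields only the numerical statement; if you wanted to continue into the bundle construction you would have to recover the Curto--Salinas factorization anyway, which is essentially what your closing paragraph gestures at.
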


Let $\{e_0, \ldots , e_k\}$ be a basis for $\ker (\mathbf M - w_0)^*$. Since $P_{w_0}$ is holomorphic on
$\Omega_0$, it follows that  $\gamma_1(w):= P_{w_0}(w) e_1 , \ldots , \gamma_k(w) :=P_{w_0}(w) e_k$ are
holomorphic on $\Omega_0$.  Thus  $\Gamma: \Omega_0 \to {\mathrm Gr}(\mathcal M, k)$, given by $\Gamma(w) = \ker
\mathbb P_0 D_{(\mathbf M - w)^*}$, defines a holomorphic Hermitian vector bundle $\mathcal P_0$ on $\Omega_0$
of rank $k$ corresponding to the Hilbert module $\mathcal M$.

\begin{thm}\label{csb}   If any two Hilbert modules $\mathcal M$ and $\mathcal {\tilde M}$ from $\mathfrak B_1(\Omega)$ are isomorphic via an unitary module map, then the corresponding holomorphic Hermitian vector bundles
$\mathcal P_0$ and $\mathcal {\tilde P}_0$ on $\Omega_0^*$ are equivalent.
\end{thm}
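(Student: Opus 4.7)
The plan is to show that the unitary module map $U: \mathcal M \to \widetilde{\mathcal M}$ itself (and its diagonal amplification on the $m$-fold direct sum) provides the required Hermitian holomorphic bundle equivalence between $\mathcal P_0$ and $\widetilde{\mathcal P}_0$. The strategy rests on the elementary observation that a unitary intertwines all the operators that enter into the construction of $\mathcal P_0$ from Lemma \ref{const}: the adjoints $M_j^*$, the column operator $D_{(\mathbf M - w)^*}$, and, more subtly, the orthogonal projection $\mathbb P_0$ onto the closed range at the base point $w_0$.

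First, because $U M_j = \widetilde{M}_j U$ for $1 \leq j \leq m$ and $U$ is unitary, taking adjoints yields $U M_j^* = \widetilde{M}_j^* U$, so that
$$
U^{(m)} D_{(\mathbf M - w)^*} = D_{(\widetilde{\mathbf M} - w)^*} U, \qquad w \in \Omega,
$$
where $U^{(m)} := U \oplus \cdots \oplus U$ is a unitary from $\mathcal M \oplus \cdots \oplus \mathcal M$ onto $\widetilde{\mathcal M} \oplus \cdots \oplus \widetilde{\mathcal M}$. Specialising to $w = w_0$ and running the same relation with $U^*$ shows that $U^{(m)}$ maps $\overline{\mathrm{ran}}\, D_{(\mathbf M - w_0)^*}$ unitarily onto $\overline{\mathrm{ran}}\, D_{(\widetilde{\mathbf M} - w_0)^*}$. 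Since both $\mathbb P_0$ and $\widetilde{\mathbb P}_0$ are the orthogonal projections onto these matched closed subspaces, the intertwining $U^{(m)} \mathbb P_0 = \widetilde{\mathbb P}_0 U^{(m)}$ follows.

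Combining the two intertwinings gives
$$
U^{(m)} \mathbb P_0 D_{(\mathbf M - w)^*} = \widetilde{\mathbb P}_0 D_{(\widetilde{\mathbf M} - w)^*} U, \qquad w \in \Omega_0,
$$
so that $U$ maps $\ker \mathbb P_0 D_{(\mathbf M - w)^*}$ bijectively and isometrically onto $\ker \widetilde{\mathbb P}_0 D_{(\widetilde{\mathbf M} - w)^*}$. Thus $U$ restricts to a fiberwise unitary $\Gamma(w) \to \widetilde{\Gamma}(w)$ for every $w \in \Omega_0$. Because $U$ is a single bounded operator, independent of $w$, the assembled map $(w, v) \mapsto (w, Uv)$ of total spaces is automatically holomorphic: if $\gamma_i(w) = P_{w_0}(w) e_i$ is the holomorphic frame of $\mathcal P_0$ constructed before the lemma, then $U \gamma_i(w) = U P_{w_0}(w) U^* \cdot U e_i$ is holomorphic in $w$ and yields a frame of $\widetilde{\mathcal P}_0$. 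This is precisely an equivalence of Hermitian holomorphic vector bundles on $\Omega_0^*$.

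The main obstacle, and really the only nontrivial step, is verifying the projection-intertwining $U^{(m)} \mathbb P_0 = \widetilde{\mathbb P}_0 U^{(m)}$, which requires that $U^{(m)}$ transport the \emph{closed} range, not merely the range, of $D_{(\mathbf M - w_0)^*}$. This is automatic since $U^{(m)}$ and $U^{(m)*}$ are both bounded and hence continuous, so each carries closures to closures of the corresponding images; but it is the one place where the unitarity of $U$ (not just its being a module map) is essential.
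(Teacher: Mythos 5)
Your argument is correct, but it takes a genuinely different and leaner route than the paper's. The paper proves the stronger operator identity $UP(\bar w,\bar w_0)=\tilde P(\bar w,\bar w_0)U$: it uses uniqueness of the positive square root and of the polar decomposition to obtain $|D_{(\mathbf M-w_0)^*}|=U^*|D_{(\tilde{\mathbf M}-w_0)^*}|U$, then ${\tilde P}_jV_{\tilde{\mathbf M}}(w_0)=UP_jV_{\mathbf M}(w_0)U^*$ and $Q_{\tilde{\mathbf M}}(w_0)=UQ_{\mathbf M}(w_0)U^*$, hence $R_{\tilde{\mathbf M}}(w_0)\tilde D_{\bar w-\bar w_0}=UR_{\mathbf M}(w_0)D_{\bar w-\bar w_0}U^*$, and finally sums the Neumann series \eqref{cs3} to transport $P(\bar w,\bar w_0)$, from which the bundle equivalence is read off. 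You bypass the polar-decomposition bookkeeping entirely: taking the fibers in the form $\mathcal P_w=\ker \mathbb P_0 D_{(\mathbf M-w)^*}$ supplied by Lemma \ref{const}, you need only the two elementary intertwinings $U^{(m)}D_{(\mathbf M-w)^*}=D_{(\tilde{\mathbf M}-w)^*}U$ and $U^{(m)}\mathbb P_0=\tilde{\mathbb P}_0U^{(m)}$, the latter because a unitary carries the orthogonal projection onto a closed subspace to the projection onto its image, and the relevant ranges at $w_0$ are closed since $\mathfrak m_{w_0}\mathcal M$ has finite codimension; this gives $U\mathcal P_w=\tilde{\mathcal P}_w$ fiberwise and isometrically, and holomorphy comes for free because applying the fixed bounded operator $U$ to the holomorphic frame $\gamma_i(w)=P_{w_0}(w)e_i$ produces a holomorphic frame of $\tilde{\mathcal P}_0$ with the same Grammian. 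What the paper's longer computation buys is the finer conclusion that the canonical operator $P(\bar w,\bar w_0)$ itself --- and with it the canonical choice of the vectors $K^{(i)}_w$ and of the generators $a_i$ in the canonical decomposition of the reproducing kernel --- transforms naturally under $U$, which is of independent interest; your argument delivers exactly the statement of Theorem \ref{csb} with less machinery and, as in the paper's closing remark, applies verbatim to any semi-Fredholm tuple with closed range.
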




\subsection{\sf Organization}
We now describe the  organization of the paper. In Section \ref{sh}, we prove the decomposition theorem \ref{decomp}.
The equivalence of the Gleason property with the equality in \eqref{ineq} is shown in Section \ref{anHM}. At the end of this Section, we give a simple proof of  a conjecture from
\cite{dmv} for smooth points. This was first proved in  \cite{dg}.  In Section \ref{putinar},
we prove that the Bergman module is privileged.  In Section \ref{co.sh}, we show that the sheaf model of
\cite{pu1, pu2} coincides with the one proposed here if the Hilbert modules are assumed to be privileged.
Finally in Section \ref{inv}, we construct a  Hermitian holomorphic vector bundle following \cite{cs}.  We show
how to extract invariants for the Hilbert module from this vector bundle. These invariants are not easy to
compute in general, but we provide explicit computations for a class of examples in Section \ref{lm} and  in
Section \ref{nk}, we give detailed calculations of an invariant which is somewhat easier to compute.

\subsection{\rm }{\sf Index of notations}\vskip .75em

\begin{tabular}{ll}
$\C[\underline z]$ & the polynomial ring $\C[z_1,\ldots,z_m]$ of $m$- complex variables  \\

$\mathfrak m_w$ & maximal ideal of $\C[\underline z]$ at the point $w\in\C^m$\\

$\Omega$ & a bounded domain in $\C^m$ \\

  $\Omega^*$ & $\{\bar z: z\in\Omega\}$ \\

  $\D^m$ &  the unit polydisc in $\C^m$ \\

$M_i$ & module multiplication by the co-ordinate function $z_i,\, 1\leq i\leq m$\\

  $M_i^*$ & adjoint of $M_i$ \\

  $D_{(\mathbf M - w)^*}$ & the operator $\mathcal M\ra \mathcal M\oplus\ldots\oplus\mathcal M$ defined by $f\mapsto ((M_j - w_j)^*f)_{j=1}^m $\\

  $\hat {\mathcal H}$ & the analytic localization $\mathcal O\hat\otimes_{\mathcal O(\C^m)}\mathcal H$ of the Hilbert module $\mathcal H$\\

  $ B_n(\Omega)$ & Cowen-Douglas  class of operators, $n\geq 1$\\

  $\partial^{\alpha},\bar\partial^{\alpha}$ & $\partial^{\alpha}=\frac{\partial^{|\alpha|}}{\partial z_1^{\alpha_1}\cdots z_m^{\alpha_m}},\bar{\partial}^{\alpha}=\frac{\partial^{|\alpha|}}{\partial\bar{z}_1^{\alpha_1}\cdots \bar{z}_m^{\alpha_m}}$ for $\alpha = (\alpha_1,\ldots,\alpha_m)\in

  {\mathbb Z^+\times\cdots\times\mathbb Z^+}$ and \\ {} & $|\alpha|=\sum_{i=1}^m\alpha_i$\\

$q(D)$ & the differential operator $\sum_{\alpha}a_{\alpha}\partial^{\alpha}$ corresponding to the polynomial $q = \sum_{\alpha}a_{\alpha}z^{\alpha}$ \\

$\mathcal S^{\mathcal M}$ & the analytic submodule of $\mathcal O_\Omega$, corresponding to the Hilbert module $\mathcal M \in \mathfrak{B}(\Omega)$ \\

  $K(z,w)$ & a reproducing kernel\\

  $E(w) $ & the evaluation functional (the linear functional induced by $K(\cdot, w)$)\\

  $\parallel\cdot\parallel_{\bar \Delta(0;r)}$ & supremum norm\\

  $\parallel\cdot\parallel_2$ & $L^2$ norm with respect to the volume measure\\

$\mathbb V_w(\mathcal F)$ & the characteristic space at $w$, which is $\{q\in\C[\underline z]: q(D)f\big{|}_w=0 \mbox{~for ~all ~}f\in\mathcal F\}$\\

   & for some set  $\mathcal F$ of holomorphic functions\\

 $[\mathcal I]$ & the closure of the polynomial ideal $\mathcal I \subseteq \mathcal M$ in some Hilbert module $\mathcal M$ \\

 $\mathcal A(\Omega)$ & the "disk" algebra $\O(\Omega) \cap

  C(\overline{\Omega})$ over $\Omega$ \\

  $\mathcal O(\ov \Omega)$ & the space of germs

  of analytic functions defined in a neighborhood of $\ov \Omega$\\






  $\mathbb P_0$ & orthogonal projection onto ${{\mathrm ran}~

D_{(\mathbf M - w_0)^*}}$\\

  $\mathcal P_w$ & $\ker \mathbb P_0 D_{(\mathbf M - w)^*}$ for $w\in\Omega$ \\

\end{tabular}

\section{The proof of the decomposition theorem}\label{sh}

\begin{proof}[Proof of Theorem \ref{decomp}]
For simplicity of notation, we assume, without loss of generality, that $0=w_0\in\Omega$. Let
$\{e_n\}_{n=0}^{\infty}$ be a orthonormal basis for $\mathcal M$. Let $\{e_n^*\}_{n=0}^{\infty}$ be the
corresponding dual basis of ${\mathcal M}^*$, that is, $e_n^*(e_j)=\delta_{nj}$, Kronecker delta, $n,j\in\mathbb
N\cup\{0\}$. Let $E(z)$ be the evaluation functional at the point $z\in\Omega$. Clearly $E(z)\in{\mathcal M}^*$,
as $\mathcal M$ posses a reproducing kernel $K$. So $E(z)=\textstyle \sum_{n=0}^{\infty}a_n(z)e_n^*$. Now,
$$
e_n(z)=E(z)e_n=\{\d_{k=0}^{\infty} a_k(z)e_k^*\}e_n=\d_{k=0}^{\infty}a_k(z)e_k^*(e_n)=
\d_{k=0}^{\infty}a_k(z)\delta_{kn}=a_n(z).
$$

It follows from \cite[Theorem 2, page. 82]{gr} that for every element $f$ in $\mathcal S^{\mathcal M}_0$, and
therefore in particular for every $e_n$, we have
$$
e_n(z)=\d_{i=1}^d g^0_i(z)h_i^{(n)}(z),\,z\in\Delta(0;r)
$$
for some holomorphic functions $h_i^{(n)}$ defined on the closed polydisk $\bar\Delta(0;r) \subseteq \Omega$.
Furthermore, they can be chosen with the bound  ${\parallel h_i^{(n)}\parallel}_{\infty, \bar\Delta(0;r)}\leq C
{\parallel e_n\parallel}_{\infty, \bar\Delta(0;r)}$ for some positive constant $C$ independent of $n$. Although, the decomposition is not necessarily with respect to the standard coordinate system at $0$, we will be using only a point wise estimate. Consequently, in the equation given above, we have chosen not to emphasize the change of variable involved and we have,
$$
E(z)= \sum_{n=0}^{\infty}\{\d_{i=1}^d g^0_i(z)h_i^{(n)}(z)\}e_n^*=\d_{i=1}^d
g^0_i(z)\{\d_{n=0}^{\infty}h_i^{(n)}(z)e_n^*\}.
$$
Setting  $H_i(z)$ to be the sum $\textstyle\sum_{n=0}^{\infty}h_i^{(n)}(z)e_n^*$, we can write
$E(z)=\textstyle\sum_{i=1}^d H_i(z)g^0_i(z)$ on $\Delta(0;r)$. For the proof of part (i), we need to show that
$H_i(z)\in{\mathcal M}^*$ where $z\in\Delta(0;r)$. Or, equivalently, we have to show that  $\sum_{n=0}^{\infty}{
|{h_i^{(n)}}(z)|}^2<{\infty}$ for each $z \in \Delta(0;r)$. First, using the estimate on $h_i^{(n)}$, we have
$$
|{h_i^{(n)}}(z)|\leq{\parallel h_i^{(n)}\parallel}_{\bar\Delta(0;r)}\leq C {\parallel
e_n\parallel}_{\bar\Delta(0;r)}.
$$
We prove below the  inequality $\sum_{n=0}^{\infty}{\parallel e_n\parallel}_{\infty
\bar\Delta(0;r)}^2<{\infty},$ completing the proof of part (i). We prove, more generally, that for
$f\in{\mathcal M}$, \begin{equation}\label{ie1}{\parallel f\parallel}_{\bar\Delta (0;r)} \leq C^\prime{\parallel
f\parallel}_{2,{\bar\Delta (0;r)}},\end{equation} where ${\parallel .\parallel}_2$ denotes the $L^2$ norm with
respect to the volume measure on $\bar\Delta (0;r)$
It is evident from the proof that the  constant  $C^\prime$ may be chosen to be independent of the functions
$f$.

Any function $f$  holomorphic on $\Omega$ belongs to the Bergman space $L^2_a(\Delta (0;r+\varepsilon))$ as long
as $\Delta (0;r+\varepsilon) \subseteq \Omega$.  We can surely pick $\varepsilon > 0$ small enough to ensure
$\Delta (0;r+\varepsilon) \subseteq \Omega$.   Let $B$ be the Bergman kernel of the Bergman space $L^2_a(\Delta
(0;r+\varepsilon))$.
Thus we have
$$\mid f(w)\mid ~=~ \mid\langle f, B(\cdot,w)\rangle\mid~\leq~ {\parallel f\parallel}_{2,\Delta (0;r+\varepsilon)}{B(w,w)}^{\frac{1}{2}},\, w\in\Delta (0;r+\varepsilon).$$
Since the function $B(w,w)$ is bounded on compact subsets of $\Delta (0;r+\varepsilon)$, it follows that
${C^\prime}^2:=\sup \{B(w,w) : w\in\bar\Delta (0;r)\}$ is finite. We therefore see that
$${\parallel f\parallel}_{\bar\Delta (0;r)}=\sup \{\mid f(w)\mid : w\in\bar\Delta (0;r)\}\leq C^\prime {\parallel f\parallel}_{2,\Delta (0;r+\varepsilon)}.$$  Since $\varepsilon >0$ can be chosen arbitrarily close to $0$, we infer the inequality \eqref{ie1}.

The inequality \eqref{ie1} implies, in particular, that
$$
\d_{n=0}^{\infty} {\parallel e_n\parallel}_{\bar\Delta (0;r)}^2\leq
C^\prime\d_{n=0}^{\infty}\displaystyle\int_{\bar\Delta (0;r)}{\mid e_n(z)\mid }^2 dz_1\wedge d{\bar z}_1\wedge
\cdots \wedge dz_m\wedge d{\bar z}_m .
$$
We have shown that the evaluation functional $E(z) \in \mathcal M^*$ is of the form $\sum_{n=0}^\infty e_n(z)
e_n^*$ and hence the function $G(z):=\textstyle\sum_{n=0}^{\infty}|e_n(z)|^2$ is finite for each $z\in \Omega$.
The sequence of positive continuous functions  $G_k(z):=\sum_{n=0}^{k}{|e_n(z)| }^2$ converges uniformly to $G$
on $\bar\Delta (0;r)$.  To see this, we note   that
\begin{eqnarray*}
{\parallel  G_k -  G \parallel^2}_{\bar\Delta(0;r)} &\leq & {C^\prime}^2 \displaystyle\int_{\bar\Delta(0;r)}{|
G_k(z)-G(z)|}^2 dz_1\wedge d{\bar z}_1\wedge \cdots \wedge dz_m\wedge d{\bar z}_m\\ &\leq & {C^\prime}^2
\displaystyle\int_{\bar\Delta(0;r)}\{\d_{n=k+1}^{\infty}{\mid e_n(z)\mid }^2\}^2 dz_1\wedge d{\bar z}_1\wedge
\cdots \wedge dz_m\wedge d{\bar z}_m,
\end{eqnarray*}
which tends to $0$ as $k\ra\infty$. So, by monotone convergence theorem, we can interchange the integral and the
infinite sum to conclude
$$\d_{n=0}^{\infty} {\parallel e_n\parallel}_{\bar\Delta (0;r)}^2\leq C\displaystyle\int_{\bar\Delta
(0;r)}\d_{n=0}^{\infty}{\mid e_n(z)\mid }^2 dz_1\wedge d{\bar z}_1\wedge \cdots \wedge dz_m\wedge d{\bar z}_m <
{\infty}$$ as $G$ is a continuous function on $\bar\Delta(0;r)$.  This shows that
$$\d_{n=0}^{\infty}{\mid {h_i^{(n)}(z)}\mid}^2 \leq K\d_{n=0}^{\infty} {\parallel e_n\parallel}_{\bar\Delta
(0;r)}^2<\infty.$$ Hence $H_i(z)\in{\mathcal M}^*$.

Now, for each $i$, $1\leq i \leq d$, there exist $K_w^{(i)}\in\mathcal M$ such that $H_i(w)f=\langle f,
K_w^{(i)}\rangle$. Let us set $K_i(z,w):=K_w^{(i)}(z)$.  The function $K_i$ is holomorphic in the first variable
and antiholomorphic in the second by definition. For $w \in \Delta (0;r)$, we have $\langle f,
K(\cdot,w)\rangle=E(w)f=\sum_{i=1}^d g^0_i(w)H_i(w)f=\sum_{i=1}^d\langle f, \bar g^0_i(w)K_i(\cdot,
w)\rangle=\langle f, \sum_{i=1}^d\bar g^0_i(w)K_i(\cdot, w)\rangle$ for all $f\in\mathcal M$. Hence $K(z,
w)=\sum_{i=1}^d\bar g^0_i(w)K_i(z, w) $ and (i) is proved.

To prove the statement in (ii), at $0$, we have to show that whenever there exist complex numbers $\alpha_1,
\ldots , \alpha_d$ such that  $\sum_{i=1}^d{\alpha}_iK_i(z, 0)=0$, then ${\alpha}_i=0$ for all $i$.  We assume,
on the contrary, that there exists some $i\in{1,\ldots ,d}$ such that $ {\alpha}_i\neq 0$. Without loss of
generality, we assume ${\alpha}_1\neq 0$, then $K_1(z, 0)=\sum_{i=2}^d{\beta}_iK_i(z, 0)$ where ${\beta}_i=\frac
{{\alpha}_i}{{\alpha}_1}, 2\leq i\leq d $. This shows that $K_1(z, w)-\sum_{i=2}^d{\beta}_iK_i(z, w)$ has a zero
at $w=0$. From \cite[Theorem 7.2.9]{krantz}, it follows that
$$
K_1(z, w)-\d_{i=2}^d{\beta}_iK_i(z, w)=\d_{j=1}^m\bar w_jG_j(z, w)
$$
for some function $G_j:\Omega\times \Delta(0;r)\ra\mathbb C, 1\leq j\leq m$, which is holomorphic in the first
and antiholomorphic in the second variable. So, we can write
\begin{eqnarray*}
K(z, w) &=& \d_{i=1}^d\bar g^0_i(w)K_i(z, w)=\bar g^0_1(w)K_1(z, w)+\d_{i=2}^d\bar g^0_i(w)K_i(z, w)\\ &=& \bar
g_1^0(w)\{\d_{i=2}^d{\beta}_iK_i(z, w)+\d_{j=1}^m\bar w_jG_j(z, w)\}+\d_{i=2}^d\bar g^0_i(w)K_i(z, w)\\ &=&
\d_{i=2}^d(\bar g^0_i(w)+{\beta}_i\bar g^0_1(w))K_i(z, w)+\d_{j=1}^m\bar w_j\bar g^0_1(w)G_j(z, w).
\end{eqnarray*}
For $f\in\mathcal M$ and $w\in \Delta(0;r)$, we have
$$
f(w) = \langle f, K(\cdot, w)\rangle = \d_{i=2}^d( g^0_i(w)+\bar{\beta}_i g^0_1(w))\langle f,K_i(z, w) \rangle +
g^0_1(w)\langle f,\d_{j=1}^m\bar w_jG_j(z, w)\rangle.
$$
We note that $\langle f,\sum_{j=1}^m\bar w_jG_j(z, w)\rangle$ is a holomorphic function in $w$ which vanishes at
$w=0$ It then follows that $\langle f,\sum_{j=1}^m\bar w_jG_j(z, w)=\sum_{j=1}^m w_j\tilde{G}_j(w)$ for some
holomorphic functions  $\tilde G_j,\,1\leq j\leq m$ on $\Delta(0;r)$. Therefore, we have
$$
f(w) =\d_{i=2}^d( g^0_i(w)+\bar{\beta}_i g^0_1(w))\langle f,K_i(z, w) \rangle + g^0_1(w)\d_{j=1}^m
w_jg^0_1(w)\tilde{G}_j(w).
$$
Since the sheaf $\mathcal S^{\mathcal M}\big{|}_{\Delta(0;r)}$ is generated by the Hilbert module $\mathcal M$, it
follows that  the set $\{g^0_2+\bar{\beta}_2g^0_1,\ldots ,g^0_d+\bar{\beta}_dg^0_1,z_1g^0_1,\ldots ,z_mg^0_1\}$
also generates $\mathcal S^{\mathcal M}\big{|}_{\Delta(0;r)}$. In particular, they generate the stalk at $0$. This, we
claim, is a contradiction. Suppose $A\subset \mathcal S^{\mathcal M}_{0}$ is generated by germs of the functions
$g^0_2+\bar\beta_2g^0_1,\ldots ,g^0_d+\bar\beta_dg^0_1$. Let ${\mathfrak m}(\mathcal O_0)$ denotes the the only
maximal ideal of the local ring $\mathcal O_0$, consisting of the germs of functions vanishing at $0$. Then  it
follows that $${\mathfrak m}(\mathcal O_0)\{\mathcal S^{\mathcal M}_{0}/A\}=\mathcal S^{\mathcal M}_{0}/A.$$
Using Nakayama's lemma (cf. \cite[p.57]{tay}), we see that $\mathcal S^{\mathcal M}_{0}/A=0$, that is, $\mathcal
S^{\mathcal M}_{0}=A$. This contradicts the minimality of the generators of the stalk at $0$ completing the
proof of first half of (ii).

To prove the slightly stronger statement, namely, the independence of the vectors  $K^{(i)}_{w_0}$ , $1\leq i
\leq d$ in a small neighborhood of $0$, consider the Grammian $\big (\!\big (\langle K^{(i)}_w, K^{(j)}_w
\rangle \big )\! \big )_{i,j=1}^d$.  The determinant of this Grammian is nonzero at $0$.  Therefore it remains
non-zero in a suitably small neighborhood of $0$ since it is a real analytic function on $\Omega_0$.
Consequently, the vectors $K^{(i)}_w,~i=1,\ldots ,d$ are linearly independent for all $w$ in this neighborhood.

To prove the statement in (iii), that is, to prove that $K^{(i)}_{0}$ are uniquely determined by the generators
$g^0_i$ ,  $1\leq i\leq d$.  We will let  $g^0_i$ denote the germ of $g^0_i$ at $0$ as well. The uniqueness of
the set of vectors $K^{(i)}_{0}$ is clearly the same as the uniqueness of the set of linear functionals $H_i$,
$1 \leq i \leq d$.  Thus enough to show if $\sum_{i=1}^d g^0_i(z)\{H_i(z)-\tilde H_i (z)\}=0$ for some choice of
$\tilde H_i$, $1 \leq i \leq d$, then $(H_i-\tilde H_i)(0) = 0$.  But then we have
$\sum_{n=0}^{\infty}\sum_{i=1}^d g^0_i(z)\{h_i^{n}(z)-\tilde h_i^n (z)\}e_n^*=0$. Hence, for each n
$$
\d_{i=1}^d g^0_i(z)\{h_i^{n}(z)-\tilde h_i^n (z)\}=0.
$$
Fix $n$ and let $\alpha_i(z)=h_i^{n}(z)-\tilde h_i^n (z)$. In this notation,   $\sum_{i=1}^d
g^0_i(z)\alpha_i(z)=0$. Now we claim that $\alpha_i(0)=0$ for all $i\in\{1,\ldots ,d\}$. If not, we may assume
$\alpha_1(0)\neq 0$. Then the germ of $\alpha_1$ at $0$ is a unit in $_m{\mathcal O}$. Hence we can write, in
$_m\mathcal O$, $$g_1^0= -(\d_{i=2}^d g_i^0{\alpha}_{i0}){{\alpha}_{10}}^{-1},$$ where ${\alpha}_{i0}$ denotes
the germs of the analytic functions ${\alpha}_{i}$ at $0$,  $1 \leq i \leq d$. This is a contradiction, as
$g_1^0,\ldots ,g^0_d$ is a minimal set of generators of the stalk $\mathcal S^\mathcal M_0$ by hypothesis. As a
result,  $h_i^{n}(0)=\tilde h_i^n (0)$ for all $i\in\{1,\ldots ,d\}$ and $n\in{{\mathbb N}\cup \{0\}}$. Then
$H_i(0)=\tilde H_i (0)$ for all $i\in\{1,\ldots ,d\}$.  This completes the proof of (iii).

To prove the statement in (iv), let $\{g^0_1, \ldots , g^0_d\}$ and $\{\tilde g^0_1, \ldots ,\tilde g^0_d\}$ be
two sets of generators for $\mathcal S^\mathcal M_{0}$ both of which are minimal.  Let $K^{(i)}$ and $\tilde
K^{(i)}$, $1\leq i\leq d$, be the corresponding vectors  that appear in the decomposition of the reproducing
kernel $K$ as in (i). It is enough to show  that $$\mbox{span}_{\C}\{K_i(z, 0): 1\leq i\leq
d\}=\mbox{span}_{\C}\{\tilde K_i(z, 0): 1\leq i\leq d\}.$$ There exists holomorphic functions $\phi_{ij}$,
$1\leq i,j \leq d$, in a small enough neighborhood of $0$ such that $\tilde g^0_i= \sum_{j=1}^d\phi_{ij}g^0_j$.
It now follows that
\begin{eqnarray*}K(z,w) &=& \d_{i=1}^d\bar{\tilde g}^0_i(w)\tilde K_i(z,w)~=~\d_{i=1}^d(\d_{j=1}^d\bar\phi_{ij}(w)\bar g^0_j(w))
\tilde K_i(z,w)\\ &=& \d_{j=1}^d\bar g^0_j(w)(\sum_{i=1}^d\bar\phi_{ij}(w)\tilde K_i(z,w)) \end{eqnarray*} for
$w$ possibly from an even smaller neighborhood of $0$.  But $K(z,w) = \sum_{j=1}^d\bar g^0_j(w) K_j(z,w)$ and
uniqueness at the point $0$  implies that
$$K_j(z,0)=\d_{i=1}^d\bar\phi_{ij}(0)\tilde K_i(z,0)$$ for $1\leq j\leq d$. So,  we have $\mbox{span}_{\C}
\{K_i(z, 0): 1\leq i\leq d\}\subseteq\mbox{span}_{\C}\{\tilde K_i(z, 0): 1\leq i\leq d\}.$  Writing $g^0_j$ in
terms of $\tilde g^0_i$, we get the other inclusion.

Finally, to prove the statement in (v), let us apply ${M_j}^*$ to both sides of the decomposition of the
reproducing kernel $K$ given in part (i) to obtain  $\bar w_jK(z, w)=\sum_{i=1}^d\bar g^0_i(w){M_j}^*K_i(z, w)$.
Substituting $K$ from the first equation, we get $\sum_{i=1}^d\bar g^0_i(w)(M_j-w_j)^*K_i(z, w)=0$. Let
$F_{ij}(z, w)=(M_j-w_j)^*K_i(z, w)$. For a fixed but arbitrary $z_0\in\Omega$, consider the equation
$\sum_{i=1}^d\bar g^0_i(w)F_{ij}(z_0, w)=0$. Suppose there exists $k,1\leq k\leq d$ such that $F_{kj}(z_0,
0)\neq 0$. Then
$$
g^0_{k}=\{\overline {F_{kj}(z_0,\cdot)}_0\}^{-1}\d_{i=1,i\neq k}^dg^0_{i}\overline {F_{ij}(z_0,\cdot)}_0.
$$
This is a contradiction. Therefore $F_{ij}(z_0, 0)=0$, $1\leq i\leq d$, and for all $z_0\in\Omega$. So
${M_j}^*K_i(z, 0)=0$, $1\leq i\leq d,\, 1\leq j\leq m $. This completes the proof of the theorem.\end{proof}


\begin{rem} Let $\mathcal I$ be an ideal in the polynomial ring $\C[\underline z]$.
Suppose $\mathcal M\supset \mathcal I$ and that $\mathcal I$ is dense in
$\mathcal M$.  Let $\{p_i\in \C[\underline z]: 1\leq i\leq t\}$ be a minimal set of generators for the ideal $\mathcal I$. Let $V(\mathcal I)$ be the zero variety of the ideal $\mathcal I$. If $w\notin V(\mathcal I)$, then $\mathcal S^{\mathcal M}_{w}= {}_m{\mathcal O}_{w}$. Although $p_1,\ldots ,p_t$ generate the stalk at every point, they are not necessarily a minimal set of generators. 
For example, let $\mathcal I = < z_1(1+z_1), z_1(1-z_2), z_2^2>\subset \C[z_1, z_2]$. The functions $z_1(1+z_1),
z_1(1-z_2), z_2^2$ form a minimal set of generators for the ideal $\mathcal I$.  Since $1+ z_1$ and $1-z_2$ are
units in ${}_2\mathcal O_0$,  it follows that the functions $z_1$ and $z_2^2$ form a minimal set of generators
for the stalk  $\mathcal S^\mathcal M_0$.
\end{rem}

\section{The joint kernel at $w_0$ and the stalk $\mathcal S^\mathcal M_{w_0}$} \label{anHM}
Let $g^0_1,\ldots,g^0_d$ be a minimal set of generators for the stalk $\mathcal S^\mathcal M_{w_0}$ as before.
For $f\in\mathcal S^\mathcal M_{w_0}$,  we can find holomorphic functions $f_i,\,1\leq i \leq d$ on some small
open neighborhood $U$ of $w_0$ such that $f=\sum_{i=1}^dg^0_if_i$ on $U$.
We write
$$
f=\d_{i=1}^dg^0_if_i=\d_{i=1}^dg^0_i\{f_i-f_i(w_0)\}+\d_{i=1}^dg^0_if_i(w_0).
$$
on $U$.  Let ${\mathfrak m}(\mathcal O_{w_0})$ be the maximal ideal (consisting of the germs of holomorphic
functions vanishing at the point $w_0$) in the local ring $\mathcal O_{w_0}$ and
${\mathfrak m}(\mathcal O_{w_0})\mathcal S^{\mathcal M}_{w_0}={\mathfrak m_{w_0}\mathcal S^{\mathcal M}_{w_0}}$.
Thus the linear span of the
equivalence classes $[g^0_{1}],\ldots ,[g^0_{d}]$ is the quotient module ${\mathcal S^{\mathcal
M}_{w_0}}/{\mathfrak m_{w_0}\mathcal S^{\mathcal M}_{w_0}} $.  Therefore we have
$$
\mbox{~dim~}{\mathcal S^{\mathcal M}_{w_0}}/{\mathfrak m_{w_0}\mathcal S^{\mathcal M}_{w_0}}
 \leq d.
$$
It turns out that the elements $[g^0_{1}],\ldots ,[g^0_{d}]$ in the quotient module are linearly independent.
Then $\mbox{~dim~}{\mathcal S^{\mathcal M}_{w_0}}/{\mathfrak m_{w_0}\mathcal S^{\mathcal M}_{w_0}} =
d$. To prove the linear independence, let us consider the equation $\sum_{i=1}^d\alpha_i[g^0_{i}]=0$ for some
complex numbers $\alpha_i,\, 1\leq i\leq d$, or equivalently,  $\sum_{i=1}^d\alpha_ig^0_{i} \in \mathfrak
m(\mathcal O_{w})S^{\mathcal M}_{w}$. Thus there exists holomorphic functions $f_i,\, 1\leq i\leq d$,  on a
small enough neighborhood of $w_0$ and vanishing at $w_0$ such that $\sum_{i=1}^d(\alpha_i-f_i)g_i=0$.  Now
suppose $\alpha_k\neq 0$ for some $k, 1\leq k\leq d$. Then we can write $g^0_{k}=-\sum_{i\neq k}
 (\alpha_k-f_k)_0^{-1}(\alpha_i-f_i)_0g^0_{i}$ which is a contradiction.
From  the decomposition theorem \ref{decomp}, it follows that
\begin{equation}\label{steq}
\dim\displaystyle\cap_{j=1}^m \ker ({M_j}-w_{0j})^*\geq\sharp \{minimal ~generators ~for ~S^{\mathcal M}_{w_0}\}
=\dim{\mathcal S^{\mathcal M}_{w_0}}/{\mathfrak m_{w_0}\mathcal S^{\mathcal M}_{w_0}}.
\end{equation}

We will impose natural conditions on the Hilbert module $\mathcal M$, always assumed to be in the class
$\mathfrak B_1(\Omega)$, so as to ensure equality in \eqref{ineq} which is also the inequality given above. Let
$V(\mathcal M):=\{w\in\Omega : f(w)=0, \mbox{~for~all~} f\in\mathcal M\}$ . Then for $w_0 \not\in V(\mathcal
M)$, the number of minimal generators for the stalk at $w_0$ is one, in fact, $S^{\mathcal M}_{w_0}=
{}_m{\mathcal O}_{w_0}$. Also, $\mbox{dim~} \ker D_{(\mathbf{M}-w_0)^*}=1$ since the joint kernel at $w_0$  is
spanned by the Kernel function $K(\cdot, w_0)$ of $\mathcal M$ for $w_0\not\in V(\mathcal M)$. Therefore,
outside the zero set, we have  equality in \eqref{ineq}.  For a large class of Hilbert modules, we will show,
even on the zero set, that the reverse inequality is valid.  For instance, for Hilbert modules of rank $1$ over
$\C[\underline z]$, we have equality everywhere. This is easy to see:
$$
1\geq \dim \mathcal M\otimes_{{\mathcal C}_m}\C_{w_0} = \dim\displaystyle\cap_{j=1}^m \ker ({M_j}-w_{0j})^*\geq
\dim{\mathcal S^{\mathcal M}_{w_0}}/{\mathfrak m_{w_0}\mathcal S^{\mathcal M}_{w_0}}\geq 1.
$$

To understand the more general case, consider the map $i_w:\mathcal M\lo{\mathcal M}_{w}$ defined by $f\mapsto
f_{w}$, where $f_{w}$ is the germ of the function $f$ at $w$. Clearly, this map is a vector space isomorphism
onto its image. The linear space ${\mathcal M}^{(w)}:=\sum_{j=1}^m(z_j-{w}_j) \mathcal M=\mathfrak m_w\mathcal
M$ is closed since $\mathcal M$ is assumed to be  in $\mathfrak B_1(\Omega)$. Then the map $f\mapsto f_{w}$
restricted to ${\mathcal M}^{(w)}$ is a linear isomorphism from ${\mathcal M}^{(w)}$ to $({\mathcal
M}^{(w)})_{w}$.
Consider
$$\mathcal M \overset{i_w}\lo \mathcal S^{\mathcal M}_{w}\overset{\pi}\lo {\mathcal S^{\mathcal M}_{w}}/{\mathfrak
m(\mathcal O_{w})\mathcal S^{\mathcal M}_{w}},$$ where 
${\pi}$ is the quotient map. Now we have a map $\psi:{\mathcal M_{w}}/ {(\mathcal M^{(w)})_{w}}\lo{\mathcal
S^{\mathcal M}_{w}}/\{{\mathfrak m(\mathcal O_{w})\mathcal S^{\mathcal M}_{w}}\}$ which is well defined because
$(\mathcal M^{(w)})_{w}\subseteq\mathcal M_{w}\cap\mathfrak m(\mathcal O_{w})\mathcal S^{\mathcal M}_{w}$.
Whenever $\psi$ can be shown to be one-one, equality in \eqref{ineq} is forced. To see this, note that
${\mathcal M}\ominus{\mathcal M^{(w)}}\cong\mathcal M/{\mathcal M}^{(w)}$ and
$$\cap_{j=1}^m \ker ({M_j}-w_j)^*=\cap_{j=1}^m{\{\mathrm{ran} (M_j-{w}_j)\}}^{\perp}=\mathcal M
\ominus \d_{j=1}^m(z_j-{w}_j)\mathcal M={\mathcal M}\ominus{\mathcal M^{(w)}}.$$ Hence
\begin{eqnarray}\label{pivot}d\leq \dim \cap_{j=1}^m \ker ({M_j}-w_j)^*=\dim{\mathcal M}/
{\mathcal M^{(w)}}\leq\dim {\mathcal S^{\mathcal M}_{w}}/{\mathfrak m(\mathcal O_{w})\mathcal S^{\mathcal
M}_{w}} =d.\end{eqnarray} Suppose $\psi(f)=0$ for some $f \in \mathcal M$. Then $f_w\in {\mathfrak m(\mathcal
O_{w})\mathcal S^{\mathcal M}_{w}}$ and consequently, $f= \d_{i=1}^{m}(z_i-w_i)f_i$ for holomorphic functions
$f_i$, $1\leq i \leq m$, on some small open set $U$. The main question is if the functions $f_i$, $1\leq i \leq
m$, can be chosen from the Hilbert module $\mathcal M$.  We isolate below, a class of Hilbert modules for which
this question has an affirmative answer.

Let $\mathcal H$ be a Hilbert module over the polynomial ring $\C[\underline z]$ in the class
$\mathrm{B}_1(\Omega)$.  Pick, for each $w\in \Omega$, a $\mathbb C$ - linear subspace $\mathbb V_w$  of the
polynomial ring $\C[\underline z]$ with the property that it is invariant under the action of the  partial
differential operators $\{\frac{\partial}{\partial z_1},...,\frac{\partial}{\partial z_m}\}$ (see \cite{cg}).
Set
$$\mathcal M(w) = \{f\in\mathcal H: q(D)f|_{w}=0 \mbox{~for~all~}q\in\mathbb V_{w}\}.$$
For $f\in\mathcal M(w)$ and $q\in\mathbb V_{w}$, $$q(D)(z_jf)|_{w} = w_jq(D)f|_{w}+
 \frac{\partial q}{\partial z_j}(D)f|_{w}=0.$$
Now, the assumption on $\mathbb V_{w}$ ensure that $\mathcal M(w)$ is a module. We consider below, the class of
(non-trivial) Hilbert modules which are of the form  $\mathcal M:=\bigcap_{w\in \Omega} \mathcal M(w)$.
It is  easy to see that $$w\notin V(\mathcal M)\mbox{~if~and~only~if} ~\mathbb V_w=\{0\} \mbox{~if~ and~
only~if} ~\mathcal M(w)=\mathcal H.$$ Therefore,  $\mathcal M =\bigcap_{w\in V(\mathcal M)}\mathcal M(w)$. Let
$\mathbb V_w(\mathcal M) := \{q\in\C[\underline z]: q(D)f\big{|}_w=0\mbox{~for ~ all~} f\in\mathcal
 M\}$.  We note that $\mathbb V_w(\mathcal M)=\mathbb V_w$.
Fix a point in $V(\mathcal M)$, say $w_0$. Consider $\tilde{\mathbb V}_{w_0}(\mathcal M)=\{q\in\C[\underline z]:
\frac{\partial q}{\partial z_i}\in \mathbb V_{w_0}(\mathcal M),\: 1\leq i\leq m$\}. For $w\in V(\mathcal M)$,
let
\begin{eqnarray*}\label{chsp}
\mathbb V_w^{w_0}(\mathcal M) =
\begin{cases}
\mathbb V_w(\mathcal M) &\: \mbox{if}\: w\neq w_0\\
\tilde{\mathbb V}_{w_0}(\mathcal M)&\:\mbox{if}\: w=w_0.
\end{cases}
\end{eqnarray*}
Now, define $\mathcal M^{w_0}(w)$ to be the submodule (of $\mathcal H$)  corresponding to the family of the
$\mathbb C$-linear subspaces $\mathbb V_w^{w_0}(\mathcal M)$ and let $\mathcal M^{w_0}=\bigcap_{w\in V(\mathcal
M)}\mathcal M^{w_0}(w)$. So we have $\mathbb V_w(\mathcal M^{w_0})=\mathbb V_w^{w_0}(\mathcal M)$. For
$f\in\mathcal M^{(w_0)}$, we have $f=\d_{j=1}^m(z_j-w_{0j})f_j$, for some choice of $f_1, \ldots ,f_m\in\mathcal
M$. Now for any $q\in\C[\underline z]$, following \cite{cg}, we have
\begin{eqnarray}\label{cg1}q(D)f=\d_{i=1}^mq(D)\{(z_j-w_{0j})f_j\}=\d_{i=1}^m\{(z_j-w_{0j})q(D)f_j+\frac{\partial
q} {\partial z_j}(D)f_j\}.\end{eqnarray} For $w \in V(\mathcal M)$ and $f \in \mathcal M^{(w_0)}$, it follows
from the definitions that
$$
q(D)f\big |_w =
\begin{cases}
\d_{i=1}^m\{(w_j-w_{0j})q(D)f_j|_w+\frac{\partial q} {\partial
z_j}(D)f_j|_w\} = 0 &\: q\in\mathbb V_w^{w_0},\: w\neq w_0\\
\d_{i=1}^m\{\frac{\partial q} {\partial z_j}(D)f_j|_{w_0}\}=0&\:
 q\in\mathbb V^{w_0}_{w_0},\:w=w_0.
\end{cases}
$$
Thus $f\in\mathcal M ^{(w_0)}$ implies that $f\in\mathcal M^{w_0}(w)$ for each $w\in V(\mathcal M)$.  Hence
$\mathcal M^{(w_0)} \subseteq \mathcal M^{w_0}$. Now we describe the Gleason property for $\mathcal M$ at a
point $w_0$.

\begin{defn}
We say that $\mathcal M:= \cap_{w\in V(\mathcal M)}\mathcal M(w)$ has the  Gleason property at a point $w_0\in
V(\mathcal M)$ if $\mathcal M^{w_0}=\mathcal M^{(w_0)}$.
\end{defn}

Analogous to the definition of $\mathbb V_{w_0}(\mathcal M)$ for a Hilbert module $\mathcal M$, we define the
space $\mathbb V_{w_0}(\mathcal S^{\mathcal M}_{w_0}) = \{q\in\C[\underline z]:q(D)f\big |_{w_0}=0,\: f_{w_0}\in
\mathcal S^{\mathcal M}_{w_0}\}$. It will be useful to record the relation between $\mathbb V_{w_0}(\mathcal M)$
and $\mathbb V_{w_0}(\mathcal S^{\mathcal M}_{w_0})$ in a separate lemma.
\begin{lem}\label{obs}For any Hilbert module in $\mathfrak B_1(\Omega)$ and $w_0\in\Omega$, we have $\mathbb V_{w_0}(\mathcal M)=\mathbb V_{w_0}(\mathcal S^{\mathcal M}_{w_0})$.
\end{lem}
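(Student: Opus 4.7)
The inclusion $\mathbb V_{w_0}(\mathcal S^{\mathcal M}_{w_0}) \subseteq \mathbb V_{w_0}(\mathcal M)$ is essentially tautological: every $f\in\mathcal M$ produces a germ $f_{w_0}\in\mathcal S^{\mathcal M}_{w_0}$, so if $q(D)h|_{w_0}=0$ for every $h$ representing a germ in the stalk, then certainly $q(D)f|_{w_0}=0$ for every $f\in\mathcal M$. The content of the lemma is the reverse inclusion, and my plan is to deduce it from two observations: the module structure of $\mathcal M$ over $\C[\underline z]$, and the Leibniz rule for the constant-coefficient operator $q(D)$.

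First I would record the multivariable Leibniz identity, which for $q=\sum_\alpha a_\alpha z^\alpha$ reads
\begin{equation*}
q(D)(fh)\big|_{w_0}=\sum_{\beta}\frac{1}{\beta!}\,(\partial^\beta q)(D)f\big|_{w_0}\cdot \partial^\beta h\big|_{w_0},
\end{equation*}
valid whenever $f,h$ are holomorphic near $w_0$; this follows by expanding $\partial^\alpha(fh)$ via the usual binomial formula and collecting terms of equal order of differentiation of $h$. The second step is to exploit that $\mathcal M$ is a $\C[\underline z]$-module: for any multi-index $\gamma$ and any $f\in\mathcal M$, the function $(z-w_0)^\gamma f$ again lies in $\mathcal M$. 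Taking $h(z)=(z-w_0)^\gamma$ in the Leibniz identity gives $\partial^\beta h|_{w_0}=\gamma!\,\delta_{\beta\gamma}$, so
\begin{equation*}
0=q(D)\big((z-w_0)^\gamma f\big)\big|_{w_0}=(\partial^\gamma q)(D)f\big|_{w_0}
\end{equation*}
whenever $q\in\mathbb V_{w_0}(\mathcal M)$. Since $f\in\mathcal M$ was arbitrary, this shows that $\mathbb V_{w_0}(\mathcal M)$ is closed under all partial differentiations $\partial^\gamma$.

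With this stability in hand, let $q\in\mathbb V_{w_0}(\mathcal M)$ and let $g$ be any holomorphic representative of a germ in $\mathcal S^{\mathcal M}_{w_0}$. By definition of the stalk, shrinking to a small neighborhood of $w_0$ we may write $g=\sum_{i=1}^n f_i h_i$ with $f_i\in\mathcal M$ and $h_i\in\mathcal O_{w_0}$. Applying the Leibniz identity term by term yields
\begin{equation*}
q(D)g\big|_{w_0}=\sum_{i=1}^n\sum_{\beta}\frac{1}{\beta!}\,(\partial^\beta q)(D)f_i\big|_{w_0}\cdot \partial^\beta h_i\big|_{w_0},
\end{equation*}
and every summand vanishes because $\partial^\beta q\in\mathbb V_{w_0}(\mathcal M)$ and $f_i\in\mathcal M$. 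Hence $q\in\mathbb V_{w_0}(\mathcal S^{\mathcal M}_{w_0})$, completing the nontrivial inclusion.

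The main obstacle, if any, is purely bookkeeping: verifying the Leibniz formula with the correct combinatorial coefficient $1/\beta!$ for multi-indices, and making sure the local sum $g=\sum f_i h_i$ may be differentiated term by term at $w_0$ (which is harmless since only finitely many generators $f_i$ appear and only finitely many Taylor coefficients of the $h_i$ are probed by the fixed polynomial $q$). Everything else is a consequence of the module axiom $(z-w_0)^\gamma\mathcal M\subseteq\mathcal M$.
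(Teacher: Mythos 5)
Your proof is correct and follows essentially the same route as the paper: both reduce the nontrivial inclusion to showing $q(D)(fg)|_{w_0}=0$ for $f\in\mathcal M$ and $g\in\mathcal O_{w_0}$, and both ultimately rest on the module property $(z-w_0)^\gamma\mathcal M\subseteq\mathcal M$. The only organizational difference is that the paper expands $g$ in its Taylor series and interchanges $q(D)$ with that infinite sum, whereas you record the (finite) multivariable Leibniz identity and first observe the derivative-stability of $\mathbb V_{w_0}(\mathcal M)$; your version neatly sidesteps the term-by-term differentiation of an infinite series, but it is the same underlying argument.
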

\begin{proof} We note that the inclusion $\mathbb V_{w_0}(\mathcal S^{\mathcal
M}_{w_0})\subseteq \mathbb V_{w_0}(\mathcal M)$ follows from $\mathcal M_{w_0}\subseteq \mathcal S^{\mathcal
M}_{w_0}$. To prove the reverse inclusion, we need to show that $q(D)h|_{w_0}=0$ for $h\in\mathcal S^{\mathcal
M}_{w_0}$, for all $q\in\mathbb V_{w_0}(\mathcal M)$. Since $h\in\mathcal S^{\mathcal M}_{w_0}$, we can find
functions $f_1, \ldots , f_n \in \mathcal M$ and $g_1, \ldots , g_n \in \mathcal O_{w_0}$ such that
$h=\d_{i=1}^nf_ig_i$ in some small open neighborhood of $w_0$.
Therefore, it is enough to show that  $q(D)(fg)|_{w_0}=0$ for $f\in\mathcal M$,  $g$ holomorphic in a
neighborhood, say $U_{w_0}$ of $w_0$, and $q \in \mathbb V_{w_0}(\mathcal M)$. We can choose $U_{w_0}$ to be a
small  enough polydisk such that $g=\d_{\alpha}a_{\alpha}(z-w_0)^\alpha, ~z\in U_{w_0}$.  We then see that
$q(D)(fg)=\d_{\alpha}a_{\alpha}q(D)\{(z-w_0)^\alpha f\}$ for $z\in U_{w_0}$. 
Clearly, $(z-w_0)^\alpha f$ belongs to $\mathcal M$ whenever $f\in \mathcal M$. Hence $q(D)\{(z-w_0)^\alpha
f\}|_{w_0}=0$ and we have $q(D)(fg)|_{w_0}=0$ completing the proof of $\mathbb V_{w_0}(\mathcal M) \subseteq
\mathbb V_{w_0}(\mathcal S^{\mathcal M}_{w_0})$..
\end{proof}

We will show that we have equality in \eqref{ineq} for all Hilbert modules with the Gleason property.

\begin{proof}[Proof of proposition \ref{glea}]  We first show that $\ker(\pi\circ i_{w_0} ) = \mathcal M^{w_0}$. Showing $\ker(\pi\circ i_{w_0} )\subseteq\mathcal M^{w_0}$ is same as showing  $\mathcal M_{w_0}\cap\mathfrak m_{w_0}\mathcal S^{\mathcal M}_{w_0}\subseteq(\mathcal
M^{w_0})_{w_0}$. 
We claim that \begin{equation}\label{gp}\mathbb V_{w_0}(\mathfrak m_{w_0}\mathcal S^{\mathcal M}_{w_0})=\mathbb V_{w_0}^{w_0}(\mathcal M).\end{equation} If $f\in\mathfrak m_{w_0}\mathcal S^{\mathcal M}_{w_0}$, then there exists
$f_{j}\in \mathcal S_{w_0}^{\mathcal M}$ such that  $f=\d_{i=1}^m(z_j-w_{0j})f_{j}$. From equation (\ref{cg1}),
we have
$$q\in\mathbb V_{w_0}(\mathfrak m_{w_0}\mathcal S^{\mathcal M}_{w_0}) \mbox{~if~ and~ only~ if~}
\frac{\partial q}{\partial z_j}\in\mathbb V_{w_0}(\mathcal S^{\mathcal M}_{w_0})=\mathbb V_{w_0}(\mathcal M)
\mbox{~for~ all~} j, 1\leq j\leq m.$$ Now, from lemma \ref{obs}, we see that $\frac{\partial q}{\partial
z_j}\in\mathbb V_{w_0}(\mathcal M)$  $ 1\leq j\leq m$, if and only if $q\in\mathbb V_{w_0}^{w_0}(\mathcal M)$,
which proves our claim.
So for $f\in\mathcal M$, if  $f_{w_0}\in\mathfrak m_{w_0}\mathcal S^{\mathcal M}_{w_0}$, then
$f\in\mathcal M^{w_0}(w)$ for all $w\in V(\mathcal M)$.  Hence $f\in\mathcal M^{w_0}$ and as a result, we have $
\mathcal M_{w_0}\cap\mathfrak m_{w_0}\mathcal S^{\mathcal M}_{w_0}\subseteq(\mathcal
M^{w_0})_{w_0}$.

Now let $f\in\mathcal M^{w_0}$. From \eqref{gp} it follows that $$
f\in\{g\in\mathcal O_{w_0}: q(D)g\big{|}_{w_0}=0 \mbox{~for~all~} q\in\mathbb V_{w_0}(\mathfrak m_{w_0}\mathcal S^{\mathcal M}_{w_0})\}.
$$
Then from \cite[Prposotion 2.3.1]{cg} we have $f\in\mathfrak m_{w_0}\mathcal S^{\mathcal M}_{w_0}$. Therefore $f\in\ker (\pi\circ i_{w_0}$ and $\ker(\pi\circ i_{w_0} ) = \mathcal M^{w_0}$.

Next we show that the map $\pi\circ i_{w_0}$ is onto. Let $\sum_{i=1}^nf_ig_i\in\mathcal S^{\mathcal M}_{w_0}$, where $f_i\in\mathcal M$ and $g_i$'s are holomorphic function in some neighborhood of $w_0, \, 1\leq i\leq n$. We need to show that there exist $f\in\mathcal M$ such that the class $[f]$ is equal to $[\sum_{i=1}^nf_ig_i]$ in $\mathcal S^{\mathcal M}_{w_0}/\mathfrak m_{w_0}\mathcal S^{\mathcal M}_{w_0}$. Let us take $f=\sum_{i=1}^nf_ig_i(w_0)$. Then 
$$
\sum_{i=1}^nf_ig_i - f = \sum_{i=1}^nf_i\{g_i -g_i(w_0)\}\in\mathfrak m_{w_0}\mathcal S^{\mathcal M}_{w_0}.
$$ This completes the proof of surjectivity.

Suppose Gleason property holds for $\mathcal M$ at $w_0$. Since $\mathcal M^{(w_0)}\subseteq \ker(\pi\circ i_{w_0})$, and we have just shown that $ \ker(\pi\circ
i) = \mathcal M^{w_0}$, it follows from the Gleason property at $w_0$ that we have the equality $\ker
(\pi\circ i_{w_0})=\mathcal M^{(w_0)}$. We recall then that the map $\psi:{\mathcal M}/{\mathcal
M^{(w_0)}}\lo{\mathcal S^{\mathcal M}_{w_0}}/\{{\mathfrak m_{w_0}\mathcal S^{\mathcal M}_{w_0}}\}$
is one to one. The equality in \eqref{ineq} is established using the equations \eqref{steq} and \eqref{pivot}.

Now suppose equality holds in \eqref{ineq}. From the above, it is clear that $\mathcal M/\mathcal M^{w_0}$ is isomorphic to $\mathcal S^{\mathcal M}_{w_0}/\mathfrak m_{w_0}\mathcal S^{\mathcal M}_{w_0}$. Thus 
$$
\dim \mathcal M/\mathcal M^{w_0} = \dim \mathcal M/\mathcal M^{(w_0)}.
$$ 
But as $\mathcal M^{(w_0)}\subseteq \mathcal M^{w_0}$, we have $\mathcal M^{(w_0)} = \mathcal M^{w_0}$ and hence Gleason property holds for $\mathcal M$ at $w_0$.
\end{proof}

A class of examples of Hilbert spaces satisfying Gleason property can be found in \cite{fang}. It was shown in
\cite{fang} that Gleason property holds for analytic Hilbert module. However it is not entirely clear if it
continues to hold for submodules of analytic Hilbert module. We will show here, never the less, we have equality
in \eqref{ineq}. Let $\mathcal M$ be a submodule of an analytic Hilbert module over $\C[\underline z]$. Assume
that $\mathcal M$  is a closure of an ideal $\mathcal I \subseteq \C[\underline z]$. From \cite{cg, dg}, we note
that
$$
\dim\displaystyle\cap_{j=1}^m \ker (M_j-w_{0j})^*=\dim \mathcal I/\mathfrak m_{w_0}\mathcal I.
$$
Therefore from \eqref{steq} we have
$$
\dim \mathcal I/\mathfrak m_{w_0}\mathcal I\geq \dim{\mathcal S^{\mathcal M}_{w_0}}/{\mathfrak m_{w_0}\mathcal S^{\mathcal M}_{w_0}}.
$$
So we need to prove the reverse inequality. Fix a point $w_0\in\Omega$. Consider the  map
$$\mathcal I\overset{i_{w_0}}\longrightarrow \mathcal S^{\mathcal M}_{w_0}\overset{\pi}\longrightarrow{\mathcal S^{\mathcal
M}_{w_0}}/{\mathfrak m_{w_0}\mathcal S^{\mathcal M}_{w_0}}.$$
We will show that $\ker (\pi\circ i_{w_0})=\mathfrak m_{w_0}\mathcal I$. Let $V(\mathcal I)$ denote the zero set
of the ideal $\mathcal I$ and $\mathbb V_w(\mathcal I)$ be its characteristic space at $w$. We begin by proving
that the characteristic space of the ideal coincides with that of corresponding Hilbert module.

\begin{lem} Assume that $\mathcal M=[\mathcal I]$. Then $\mathbb V_{w_0}(\mathcal I)=\mathbb V_{w_0}(\mathcal M)$ for $w_0\in\Omega$.
\end{lem}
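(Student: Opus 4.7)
The plan is to prove the two inclusions separately, using density of $\mathcal I$ in $\mathcal M = [\mathcal I]$ together with continuity of differentiated evaluation functionals on a reproducing kernel Hilbert space.

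First I would observe that the inclusion $\mathbb V_{w_0}(\mathcal M) \subseteq \mathbb V_{w_0}(\mathcal I)$ is immediate from the definition: if $q(D)f|_{w_0} = 0$ for every $f \in \mathcal M$, then in particular this holds for every $p \in \mathcal I$, since $\mathcal I \subseteq \mathcal M$. So the content of the lemma lies in the reverse inclusion.

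For the nontrivial direction, fix $q \in \mathbb V_{w_0}(\mathcal I)$ and $f \in \mathcal M$. Since $\mathcal M = [\mathcal I]$, there is a sequence $\{p_n\} \subset \mathcal I$ with $p_n \to f$ in the Hilbert module norm. I would then use the reproducing kernel $K$ of $\mathcal M$ to write the linear functional $f \mapsto q(D)f|_{w_0}$ in the form
\[
q(D)f|_{w_0} = \langle f,\, q(\bar D)_{w} K(\cdot, w)\big|_{w=w_0}\rangle,
\]
where $q(\bar D)_w$ denotes the conjugate of $q(D)$ acting in the $w$-variable. This identity follows by differentiating $f(w) = \langle f, K(\cdot, w)\rangle$ termwise, which is justified because a reproducing kernel of a Hilbert module consisting of holomorphic functions on $\Omega$ is automatically antiholomorphic in $w$, so its derivatives in $\bar w$ produce bounded linear functionals on $\mathcal M$. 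Consequently $f \mapsto q(D)f|_{w_0}$ is continuous on $\mathcal M$.

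Norm convergence $p_n \to f$ therefore yields $q(D)p_n|_{w_0} \to q(D)f|_{w_0}$. By hypothesis $q(D)p_n|_{w_0} = 0$ for every $n$, so the limit vanishes, giving $q(D)f|_{w_0} = 0$. Since $f \in \mathcal M$ was arbitrary, $q \in \mathbb V_{w_0}(\mathcal M)$, completing the proof. The only potential technical point is to justify the reproducing property for the derivatives $q(\bar D)_w K(\cdot, w)|_{w_0}$, which I would handle by the standard argument that $\bar\partial^\alpha_w K(\cdot, w)$ reproduces $\partial^\alpha$ at $w$ in any holomorphic reproducing kernel Hilbert space; this is really a restatement of the continuity of $f \mapsto \partial^\alpha f(w)$, which in turn follows from Cauchy estimates combined with the already established continuity of point evaluation on $\mathcal M$.
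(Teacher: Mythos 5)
Your argument is correct and is essentially the paper's own proof: both establish the trivial inclusion from $\mathcal I\subseteq\mathcal M$ and then obtain the reverse one by approximating $f\in\mathcal M$ by polynomials $p_n\in\mathcal I$ and using the identity $q(D)f|_{w_0}=\langle f, q(\bar D)K(\cdot,w_0)\rangle$, i.e.\ the continuity of the differentiated evaluation functional coming from the reproducing kernel.
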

\begin{proof} Clearly $\mathbb V_{w_0}(\mathcal I)\supseteq\mathbb V_{w_0}(\mathcal M)$, so we prove $\mathbb V_{w_0}(\mathcal I)\subseteq\mathbb V_{w_0}(\mathcal M)$.
For $q\in\mathbb V_{w_0}(\mathcal I)$ and $f\in\mathcal M$, we show that $q(D)f|_{w_0}=0$.  Now, for each
$f\in\mathcal M$, there exists a sequence of polynomial $p_n\in\mathcal I$ such that $p_n \to f$ in the Hilbert
space norm. Recall that if $K$ is the reproducing kernel for $\mathcal M$, then
\begin{eqnarray}\label{drk}  ({\partial}^{\alpha}f)(w) ~=~ {\langle
f,\bar{\partial}^{\alpha}K(\cdot,w)\rangle}, \mbox{~for~}  \alpha\in\mathbb Z_m^+, ~w\in\Omega,
 ~f\in\mathcal M\end{eqnarray}
For $w\in\Omega$ and  a compact neighborhood $C$ of $w$, we have
\begin{eqnarray*}&&|q(D)p_n(w)-q(D)f(w)| \\ &=& |\langle p_n - f, q(\bar D)K(\cdot,w)\rangle|\leq
\parallel p_n-f\parallel_{\mathcal M}\parallel q(\bar D)K(\cdot,w)\parallel_{\mathcal M}\\ &\leq & \parallel
p_n-f\parallel_{\mathcal M}\displaystyle\sup_{w\in C}\parallel q(\bar D)K(\cdot,w)\parallel_{\mathcal
M}.\end{eqnarray*} So, in particular,  $q(D)p_n\big{|}_{w_0}\lo q(D)f\big{|}_{w_0}$ as $n\lo\infty$.  Since
$q(D)p_n\big{|}_{w_0}=0$ for all $n$, it follows that $q(D)f\big{|}_{w_0}=0$. Hence $q\in\mathbb
V_{w_0}(\mathcal M)$ and we are done.
\end{proof}

Now let $\mathcal J=\mathfrak m_{w_0}\mathcal I$. Recall \cite[Proposition 2.3]{dg} that $V(\mathcal J)\setminus
V(\mathcal I):=\{w\in\C^m:\mathbb V_w(\mathcal I)\subsetneq\mathbb V_w(\mathcal J)\}=\{w_0\}$. Here we will
explicitly write down the characteristic space.
\begin{lem}
For $w\in\C^m$, $\mathbb V_w(\mathcal J)=\mathbb V^{w_0}_w(\mathcal I)$. Here $\mathbb V^{w_0}_w(\mathcal I) =\left\{%
\begin{array}{ll}
    \mathbb V_w(\mathcal I), & \hbox{$w\neq w_0$;} \\
    \tilde{\mathbb V}_{w_0}(\mathcal I), & \hbox{$w= w_0$}\\
\end{array}%
\right . , $ and $\tilde{\mathbb V}_{w_0}(\mathcal I)=\{q\in\C[\underline z]: \frac{\partial q}{\partial z_i}\in
\mathbb V_{w_0}(\mathcal I),\: 1\leq i\leq m\}$.

\end{lem}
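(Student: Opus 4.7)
My plan is to split the verification into the two cases $w=w_0$ and $w\neq w_0$, and in both to exploit the Leibniz-type identity (\ref{cg1}),
\[
q(D)\bigl((z_j-c)h\bigr)(x) = (x_j-c)\,q(D)h(x) + (\partial q/\partial z_j)(D)h(x),
\]
which is valid for any polynomial $q$ and any scalar $c$. A useful preliminary structural observation is that the characteristic space $\mathbb V_w(\mathcal K)$ of an arbitrary polynomial ideal $\mathcal K$ is stable under each $\partial/\partial z_j$: for $q\in\mathbb V_w(\mathcal K)$ and $g\in\mathcal K$, the element $(z_j-w_j)g$ also lies in $\mathcal K$, so the identity with $c=w_j$ evaluated at $w$ reads $(\partial q/\partial z_j)(D)g|_w = q(D)\bigl((z_j-w_j)g\bigr)|_w = 0$.

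For $w=w_0$: every $f\in\mathcal J=\mathfrak m_{w_0}\mathcal I$ is a finite sum $\sum_{j=1}^m (z_j-w_{0j})p_j$ with $p_j\in\mathcal I$. Applying the identity with $c=w_{0j}$ and evaluating at $w_0$ kills the first summand on the right, leaving
\[
q(D)f\big|_{w_0} = \sum_{j=1}^m(\partial q/\partial z_j)(D)p_j\big|_{w_0}.
\]
Since the $p_j$ may be chosen independently in $\mathcal I$, this vanishes for every $f\in\mathcal J$ if and only if $\partial q/\partial z_j\in\mathbb V_{w_0}(\mathcal I)$ for every $j$, which is precisely the defining condition for $\tilde{\mathbb V}_{w_0}(\mathcal I)$.

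For $w\neq w_0$: the inclusion $\mathbb V_w(\mathcal I)\subseteq \mathbb V_w(\mathcal J)$ is immediate from $\mathcal J\subseteq\mathcal I$. For the reverse, pick an index $j$ with $u:=w_j-w_{0j}\neq 0$, fix $q\in\mathbb V_w(\mathcal J)$ and $p\in\mathcal I$; since $(z_j-w_{0j})p\in\mathcal J$, the identity (with $c=w_{0j}$) evaluated at $w$ gives
\[
u\,q(D)p\big|_w = -(\partial q/\partial z_j)(D)p\big|_w.
\]
By the preliminary observation each iterated derivative $\partial^k q/\partial z_j^k$ also lies in $\mathbb V_w(\mathcal J)$, so repeating the argument yields $u^k\,q(D)p|_w = (-1)^k(\partial^k q/\partial z_j^k)(D)p|_w$ for every $k$; choosing $k>\deg q$ makes the right-hand side vanish, forcing $q(D)p|_w=0$.

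The main obstacle lies in the $w\neq w_0$ case: a single use of (\ref{cg1}) only relates $q(D)p|_w$ to $(\partial q/\partial z_j)(D)p|_w$, so one must iterate along the coordinate $z_j$. The two ingredients that make the recursion terminate are the structural stability of $\mathbb V_w(\mathcal J)$ under $\partial/\partial z_j$ and the finiteness of $\deg q$; the $w=w_0$ case is, in contrast, a one-line unpacking of the same identity.
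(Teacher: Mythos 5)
Your proof is correct and follows essentially the same strategy as the paper: both hinge on the identity $(w_j-w_{0j})^k\,q(D)f|_w=(-1)^k\bigl(\partial^k q/\partial z_j^k\bigr)(D)f|_w$ and conclude by taking $k>\deg q$, and both treat $w=w_0$ by a direct unpacking of the single-step Leibniz identity. The only difference is organizational: the paper expands $q(D)\{(z_j-w_{0j})^k f\}|_w$ by the full Leibniz rule and then (implicitly) untangles the resulting triangular relation, whereas you obtain the same identity by iterating the one-step identity \eqref{cg1}, justified by your preliminary observation that $\mathbb V_w(\mathcal J)$ is stable under each $\partial/\partial z_j$ --- a slightly more transparent route to the same place.
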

\begin{proof}
Since $\mathcal J\subset\mathcal I$, we have $\mathbb V_w(\mathcal I)\subseteq \mathbb V_w(\mathcal J)$ for all
$w\in\C^m$. Now let $w\neq w_0$. For $f\in\mathcal I$ and $q\in\mathbb V_w(\mathcal J)$, we show that
$q(D)f\big{|}_{w}=0$ which implies $q$ must be in $\mathbb V_w(\mathcal I)$.

Note that for any $k\in\N$ and $j\in\{1,\ldots,m\}$, $q(D)\{(z_j-w_{0j})^kf\}\big{|}_{w}=0$ as
$(z_j-w_{0j})^kf\in\mathcal J$. This implies
$\d_{l=0}^k(w_j-w_{0j})^l\binom{k}{l}\frac{\partial^{k-l}q}{\partial z_j^{k-l}}(D)f\big{|}_{w}=0$.  Hence we
have
$$(w_j-w_{0j})^kq(D)f\big{|}_{w}=(-1)^k\frac{\partial^kq}{\partial z_j^k}(D)f\big{|}_{w}\mbox{~for~all~}k\in\N
\mbox{~and~}j\in\{1,\ldots,m\}.$$ So, if $w\neq w_0$, then there exists  $i\in\{1,\ldots,m\}$ such that $w_i\neq
w_{0i}$. Therefore, by choosing $k$ large enough with respect to the degree of $q$, we can ensure
$(w_i-w_{0i})^kq(D)f\big{|}_{w}=0$. Thus $q(D)f\big{|}_{w}=0$. For $w=w_0$, we have

$q\in\mathbb V_{w_0}(\mathcal J)$ if and only if $q(D)\{(z_j-w_{0j})f\}\big{|}_{w_0}=0$ for all $f\in\mathcal I$
and $j\in\{1,\ldots,m\}$ if and only if $\frac{\partial q}{\partial z_j}(D)f\big{|}_{w_0}=0$ for all
$f\in\mathcal I$ and $j\in\{1,\ldots,m\}$ if and only if $q\in\mathbb V_{w_0}(\mathcal J)$ if and only if
$\frac{\partial q}{\partial z_j}\in\mathbb V_{w_0}(\mathcal I)$ for all $j\in\{1,\ldots,m\}$ if and only if
$q\in\tilde{\mathbb V}_{w_0}(\mathcal I)$.

This completes the proof of  the lemma.
\end{proof}

We have shown that $\mathbb V_{w_0}(\mathcal I)=\mathbb V_{w_0}(\mathcal M)=\mathbb V_{w_0}(\mathcal S^{\mathcal
M}_{w_0})$. The next Lemma provides a relationship between the characteristic space of $\mathcal J$ at the point
$w_0$ and  the sheaf  $\mathcal S^{\mathcal M}_{w_0}$.

\begin{lem}$\mathbb V_{w_0}(\mathcal J)=\mathbb V_{w_0}(\mathfrak m(\O_{w_0})\mathcal S^{\mathcal M}_{w_0}).$
\end{lem}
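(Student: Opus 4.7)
The plan is to reduce this identity to a chain of equalities that have already appeared in the preceding lemmas and in the proof of Proposition \ref{glea}. Specifically, I want to show that both sides coincide with $\tilde{\mathbb V}_{w_0}(\mathcal I)$.

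For the left-hand side, I would simply invoke the previous lemma that computes $\mathbb V_w(\mathcal J)$ for every $w\in\C^m$: specialising to $w=w_0$ gives $\mathbb V_{w_0}(\mathcal J)=\tilde{\mathbb V}_{w_0}(\mathcal I)$ directly from the definition.

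For the right-hand side, the first observation is that, as submodules of $\mathcal S^{\mathcal M}_{w_0}$, one has $\mathfrak m(\mathcal O_{w_0})\mathcal S^{\mathcal M}_{w_0}=\mathfrak m_{w_0}\mathcal S^{\mathcal M}_{w_0}$. Indeed, any germ in $\mathfrak m(\mathcal O_{w_0})$ can be written as $\sum_j(z_j-w_{0j})h_j$ with $h_j\in\mathcal O_{w_0}$, so when multiplied against a stalk element the $h_j$ factors are re-absorbed into $\mathcal S^{\mathcal M}_{w_0}$; the reverse inclusion is obvious from $\mathfrak m_{w_0}\subset\mathfrak m(\mathcal O_{w_0})$. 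Then equation \eqref{gp}, established during the proof of Proposition \ref{glea}, identifies $\mathbb V_{w_0}(\mathfrak m_{w_0}\mathcal S^{\mathcal M}_{w_0})=\mathbb V_{w_0}^{w_0}(\mathcal M)=\tilde{\mathbb V}_{w_0}(\mathcal M)$.

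To close the loop, I would use the earlier lemma of this section which asserts $\mathbb V_{w_0}(\mathcal I)=\mathbb V_{w_0}(\mathcal M)$. Since the ``tilde'' operation is defined purely in terms of $\mathbb V_{w_0}$ of its argument, this immediately yields $\tilde{\mathbb V}_{w_0}(\mathcal I)=\tilde{\mathbb V}_{w_0}(\mathcal M)$, and chaining the identifications gives the desired equality. The only real obstacle is bookkeeping, namely keeping straight the distinction between the polynomial maximal ideal $\mathfrak m_{w_0}\subset\C[\underline z]$ and the analytic maximal ideal $\mathfrak m(\mathcal O_{w_0})\subset\mathcal O_{w_0}$ and recognising that, after multiplication with the stalk, they produce the same submodule and hence the same characteristic space; no new analytic input is required beyond what has already been assembled.
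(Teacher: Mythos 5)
Your proof is correct and follows essentially the same route as the paper: both rely on the preceding lemma identifying $\mathbb V_{w_0}(\mathcal J)=\tilde{\mathbb V}_{w_0}(\mathcal I)$, the earlier lemma giving $\mathbb V_{w_0}(\mathcal I)=\mathbb V_{w_0}(\mathcal M)$, and equation \eqref{gp} from the proof of Proposition \ref{glea}. The paper establishes one inclusion trivially (from $\mathcal J\subset\mathfrak m(\O_{w_0})\mathcal S^{\mathcal M}_{w_0}$) and derives the other by a chain of implications, whereas you organize the same ingredients as a chain of equalities through the common object $\tilde{\mathbb V}_{w_0}(\mathcal M)$; this is only a cosmetic difference, and your remark that $\mathfrak m(\mathcal O_{w_0})\mathcal S^{\mathcal M}_{w_0}=\mathfrak m_{w_0}\mathcal S^{\mathcal M}_{w_0}$ is one the paper makes explicitly earlier in Section \ref{anHM}.
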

\begin{proof} We have $\mathbb V_{w_0}(\mathfrak m(\O_{w_0})\mathcal S^{\mathcal M}_{w_0}) \subseteq\mathbb V_{w_0}(\mathcal
J)$. 
From the previous lemma, it follows that if $q\in\mathbb V_{w_0}(\mathcal J)$, then $q\in\tilde{\mathbb
V}_{w_0}(\mathcal I)$, that is, $\frac{\partial q}{\partial z_j}\in \mathbb V_{w_0}(\mathcal I)= \mathbb
V_{w_0}(\mathcal S^{\mathcal M}_{w_0})$ for all $j\in\{1,\ldots,m\}$.  From \eqref{gp},  it follows that
$q\in\mathbb V_{w_0}(\mathfrak m(\O_{w_0})\mathcal S^{\mathcal M}_{w_0})$.
\end{proof}

Now, we have all the ingredients to prove that we must have equality in \eqref{ineq} for submodules of analytic
Hilbert modules which are obtained as closure of some polynomial ideal.
\begin{prop}
Let $\mathcal M = [\mathcal I]$ be a submodule of an analytic Hilbert module  over $\C[\underline z]$, where
$\mathcal I$ is an ideal in the polynomial ring $\C[\underline z]$. Then $$\sharp\{minimal ~set~ of ~ generators
~for ~\mathcal S^{\mathcal M}_{w_0}\} = \dim \cap_{j=1}^m \ker ({M_j}-w_{0j})^*.$$
\end{prop}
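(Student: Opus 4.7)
The plan is to exploit the map
$$\mathcal I \xrightarrow{i_{w_0}} \mathcal S^{\mathcal M}_{w_0} \xrightarrow{\pi} \mathcal S^{\mathcal M}_{w_0}/\mathfrak m_{w_0}\mathcal S^{\mathcal M}_{w_0}$$
set up just before the proposition. Since the number of minimal generators of $\mathcal S^{\mathcal M}_{w_0}$ equals $\dim \mathcal S^{\mathcal M}_{w_0}/\mathfrak m_{w_0}\mathcal S^{\mathcal M}_{w_0}$ (Nakayama's lemma) and $\dim \cap_j \ker (M_j-w_{0j})^* = \dim \mathcal I/\mathfrak m_{w_0}\mathcal I$ (recalled from \cite{cg, dg}), while the inequality $\dim \mathcal I/\mathfrak m_{w_0}\mathcal I \geq \dim \mathcal S^{\mathcal M}_{w_0}/\mathfrak m_{w_0}\mathcal S^{\mathcal M}_{w_0}$ has already been established via \eqref{steq}, the whole problem reduces to the reverse inequality. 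For this it suffices to prove that $\pi \circ i_{w_0}$ is surjective and that $\ker (\pi \circ i_{w_0}) = \mathfrak m_{w_0}\mathcal I$, so that the induced map $\mathcal I/\mathfrak m_{w_0}\mathcal I \to \mathcal S^{\mathcal M}_{w_0}/\mathfrak m_{w_0}\mathcal S^{\mathcal M}_{w_0}$ is a linear isomorphism.

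Surjectivity is the easier half: any germ in $\mathcal S^{\mathcal M}_{w_0}$ has the form $\sum_{i=1}^n f_i g_i$ with $f_i \in \mathcal M$ and $g_i \in \mathcal O_{w_0}$; replacing each $g_i$ by its constant term $g_i(w_0)$ changes the germ only by an element of $\mathfrak m_{w_0}\mathcal S^{\mathcal M}_{w_0}$, reducing us to representatives of the form $\sum_i f_i g_i(w_0) \in \mathcal M$. Since polynomials in $\mathcal I$ are dense in $\mathcal M$ and $\pi \circ i_{w_0}$ is continuous after passing to a small polydisc (as in the proof of Theorem \ref{decomp}, using the local $L^2$ to sup norm estimate), the class of any element of $\mathcal M$ can be realized by a class coming from $\mathcal I$.

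The heart of the matter is the kernel computation. The inclusion $\mathfrak m_{w_0}\mathcal I \subseteq \ker(\pi \circ i_{w_0})$ is immediate: writing $p = \sum_j (z_j - w_{0j}) p_j$ with $p_j \in \mathcal I$, the germ $p_{w_0}$ manifestly lies in $\mathfrak m_{w_0} \mathcal S^{\mathcal M}_{w_0}$. For the reverse inclusion, the plan is to invoke the characterization of ideal membership via characteristic spaces (Proposition 2.3.1 of \cite{cg}), which says that a polynomial $p$ lies in an ideal $\mathcal J \subseteq \C[\underline z]$ if and only if $q(D) p\big|_w = 0$ for every $w \in V(\mathcal J)$ and every $q \in \mathbb V_w(\mathcal J)$. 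Take $p \in \mathcal I$ with $p_{w_0} \in \mathfrak m_{w_0}\mathcal S^{\mathcal M}_{w_0}$, and apply this criterion to $\mathcal J = \mathfrak m_{w_0}\mathcal I$.

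For $w \neq w_0$ the preceding lemma gives $\mathbb V_w(\mathcal J) = \mathbb V_w(\mathcal I)$, so $q(D)p|_w = 0$ automatically from $p \in \mathcal I$. For $w = w_0$, the lemma $\mathbb V_{w_0}(\mathcal J) = \mathbb V_{w_0}(\mathfrak m(\mathcal O_{w_0})\mathcal S^{\mathcal M}_{w_0})$ kicks in: every $q \in \mathbb V_{w_0}(\mathcal J)$ annihilates $\mathfrak m_{w_0}\mathcal S^{\mathcal M}_{w_0}$ under $q(D)|_{w_0}$, and by hypothesis $p_{w_0}$ sits in that very module, so $q(D)p|_{w_0} = 0$. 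Hence $p \in \mathfrak m_{w_0}\mathcal I$, establishing $\ker(\pi \circ i_{w_0}) = \mathfrak m_{w_0}\mathcal I$. The main technical obstacle is the careful bookkeeping of the three characteristic spaces ($\mathbb V_{w_0}(\mathcal I)$, $\mathbb V_{w_0}(\mathcal M)$, and $\mathbb V_{w_0}(\mathcal S^{\mathcal M}_{w_0})$), but the lemmas proved immediately above the statement have already done this work, so the argument reduces to assembling them.
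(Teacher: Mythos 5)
Your proposal is correct and mirrors the paper's own argument: the essential step is the kernel computation $\ker(\pi \circ i_{w_0}) = \mathfrak m_{w_0}\mathcal I$, carried out exactly as you do via the characteristic-space lemmas preceding the proposition together with the Chen--Guo criterion for ideal membership (the paper cites Corollary~2.1.2 of \cite{cg} for the statement $\bigcap_w \mathcal J^e_w = \mathcal J$, but it is the same criterion you invoke), and this combined with \eqref{steq} gives the equality. Your surjectivity paragraph is superfluous once \eqref{steq} is in hand --- as you yourself observe --- and indeed the paper omits it, using only injectivity of the induced map on $\mathcal I/\mathfrak m_{w_0}\mathcal I$.
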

\begin{proof} Let $p\in\mathcal I$ such that $\pi\circ i_{w_0}(p)=0$, that is, $p_{w_0}\in
\mathfrak m(\O_{w_0})\mathcal S^{\mathcal M}_{w_0}$. The preceding Lemma implies $q(D)p\big{|}_{w_0}=0$ for all
$q\in\mathbb V_{w_0}(\mathcal J)$. So, $p\in \mathcal J^e_{w_0}:=\{r\in\C[\underline z]: q(D)p\big{|}_{w_0}=0,
\mbox{~for~ all~}q\in\mathbb V_{w_0}(\mathcal J)\}$. Therefore, from \cite[Corollary 2.1.2]{cg} we have
$p\in\bigcap_{w\in\C^m}\mathcal J^e_{w}=\mathcal J$. Thus $\ker (\pi\circ i_{w_0})=\mathcal J=\mathfrak
m_{w_0}\mathcal I$. Then the map $\pi\circ i_{w_0} : \dim \mathcal I/\mathfrak m_{w_0}\mathcal I\ra
\dim{\mathcal S^{\mathcal M}_{w_0}}/{\mathfrak m_{w_0}\mathcal S^{\mathcal M}_{w_0}}$ is one-one and
we have
$$\dim \mathcal
I/\mathfrak m_{w_0}\mathcal I\leq \dim{\mathcal S^{\mathcal M}_{w_0}}/{\mathfrak m_{w_0}\mathcal S^{\mathcal M}_{w_0}}.$$ Therefore, we have equality in \eqref{ineq}.\end{proof}

\begin{rem}
Corollary \ref{glea1} is immediate from the Theorem \ref{glea} and the  proposition given above.
\end{rem}

\begin{rem} 
In the paper \cite{dg}, it is proved that if $\mathcal M$ is a closure of an ideal in the polynomial ring and
$w_0\in V(\mathcal I)$ is a smooth point then,
$$\dim\cap_{i=1}^m\ker (M_j-w_{0j})^*=\left\{
\begin{array}{ll}
1 & \hbox{\rm{for} $w_0\notin V(\mathcal I)\cap\Omega$;} \\
\rm{codimension ~of~} \textit{V}(\mathcal I) & \hbox{\rm{for} $w_0\in V(\mathcal I)\cap\Omega$.}
\end{array}
\right.$$ This can be easily derived from the Proposition given above. In the course of the proof of the main
theorem in \cite{dg},  a change of variable argument is used to show that one may assume without of loss of
generality that the stalk at $w_0$
is generated by the co-ordinate functions $z_1,\ldots,z_r$, where $r$ is the co-dimension of $V(\mathcal I)$.
Therefore, the number of minimal generators for the stalk at a smooth point is equal to the codimension of
$V(\mathcal I)$. It now follows from the Proposition that the dimension of the joint kernel at a smooth point is
equal to the co-dimension of $V(\mathcal I)$.
\end{rem}

\section{Bergman space privilege}\label{putinar}
Fix two positive integer $p,q$. The division problem asks if the solution $u\in\O(\Omega)^q$ to the linear
equation $ A u = f $ must belong to $L^2_a(\Omega)^q$ if $f \in L^2_a(\Omega)^p$  and the matrix $A\in
M_{p,q}(\O(\overline{\Omega}))$ of analytic functions defined in a neighborhood of $\overline{\Omega}$ are
given. Two independent steps are necessary to understand the nature of the Division problem.

First, the solution $u$ may not be unique, simply due to the non-trivial relations among the columns of the
matrix $A$. This difficulty is clarified by homological algebra: at the level of coherent analytic sheaves,
$\mathfrak N = {\rm coker}(A: \O|_{\overline\Omega}^p \longrightarrow \O|_{\overline\Omega}^q)$ admits a finite
free resolution
\begin{equation}\label{sheafresolution}
0\rightarrow \O|_{\overline\Omega}^{n_p}\xrightarrow{d_p}\cdots\rightarrow
\O|_{\overline\Omega}^{n_1}\xrightarrow{d_1} \O|_{\overline\Omega}^{n_0}\rightarrow \mathfrak N \rightarrow 0,
\end{equation}
where $n_1 = p, n_0 = q$ and $d_1 = A$. The existence of such a resolution is assured by the analogue of Hilbert
syzygies theorem in the analytic context, see for instance \cite{GR}.

The second step, of circumventing the non-existence of boundary values for Bergman space functions, is resolved
by a canonical quantization method, that is, by passing to the algebra of Toeplitz operators with continuous
symbol on $L^2_a(\Omega)$.  We import below, from the well understood theory of Toeplitz operators on domains of
$\C^m$, a crucial criterion for a matrix of Toepliz operators to be Fredholm (cf. \cite{Sun,V}).

Assume that the analytic matrix $A(z)$ is defined on a neighborhood of $\overline{\Omega}$. One proves by
standard homological techniques that every free, finite type resolution of the analytic coherent sheaf $
\mathfrak N= {\rm coker}(A: \O|_{\overline\Omega}^p \longrightarrow \O|_{\overline\Omega}^q)$ induces at the
level of the Bergman space $L^2_a(\Omega)$ an exact complex, see \cite{D0}. The similarity between the two
resolutions given above are not accidental, as it will be revealed in the next theorem. After understanding the
disc-algebra privilege on a strictly convex domain \cite{PS}, the statement of Theorem \ref{Privilege} is not
surprising.

\begin{proof}[Proof of Theorem \ref{Privilege}] The proof is very similar to the one of the disk algebra case \cite{PS}, and we only sketch below
the main ideas. Assume that the resolution \ref{Bergmanresolution} exists and that the last arrow has closed
range. The exactness at each degree of the resolution is equivalent to the invertibility of the Hodge operator:
$$ d_k^\ast d_k + d_{k+1} d_{k+1}^\ast : L^2_a(\Omega)^{n_k}
\longrightarrow L^2_a(\Omega)^{n_k},\ \ \ 1 \leq k \leq p,$$ where we put $d_{p+1} = 0$. To be more specific:
the condition $\ker [d_k^\ast d_k + d_{k+1} d_{k+1}^\ast] = 0$ is equivalent to the exactness of the complex at
stage $k$, implying hence that ${\rm ran} (d_{k+1})$ is closed. In addition, if the range of $d_k$ is closed,
then, and only then, the self-adjoint operator $d_k^\ast d_k + d_{k+1} d_{k+1}^\ast$ is invertible.

Since the boundary of $\Omega$ is smooth, the commutator $[T_f,T_g]$ of two Toeplitz operators acting on the
Bergman space and with continuous symbols $f,g \in C(\overline{\Omega})$ is compact, see for details and
terminology \cite{B,Sun,V}. Consequently for every $k$, $d_k^\ast d_k + d_{k+1} d_{k+1}^\ast$ is, modulo compact
operators, a $n_k \times n_k$ matrix of Toeplitz operators with symbol
$$ d_k(z)^\ast d_k(z) + d_{k+1}(z) d_{k+1}(z)^\ast, \ \ w \in
\overline{\Omega},$$ where the adjoint is now taken with respect to the canonical inner product in $\C^{n_k}$.
According to a main result of \cite{B}, or \cite{V,Sun}, if the Toeplitz operator $d_k^\ast d_k + d_{k+1}
d_{k+1}^\ast$ is Fredholm, then its matrix symbol is invertible. Hence
$$ \ker [d_k(z)^\ast d_k(z) + d_{k+1}(z) d_{k+1}(z)^\ast] = 0, \ \ 1
\leq k \leq p.$$
Thus, for every $z \in \partial \Omega$,
$$ {\rm rank} A(z) = \dim {\rm coker} (d_1(w)) =
n_0-n_1+n_2-...+(-1)^p n_p.$$

To prove the other implication, we rely on the disk algebra privilege criterion obtained in the note \cite{PS}.
Namely, in view of Theorem 2.2 of \cite{PS}, if the rank of the matrix $A(z)$ does not jump for $z$ belonging to
the boundary of $\Omega$, then there exists a resolution of $\mathbf N = {\rm coker} \ A : \A(\Omega)^p
\longrightarrow \A(\Omega)^q$ with free, finite type $\A(\Omega)$-modules:
\begin{equation}\label{diskalgresolution}
0\rightarrow \A(\Omega)^{n_p}\xrightarrow{d_p}\cdots\rightarrow \A(\Omega)^{n_1}\xrightarrow{d_1}
\A(\Omega)^{n_0}\rightarrow \mathbf N\rightarrow 0.
\end{equation}
As before, we denote $d_1 = A$. We have to prove that the induced complex (\ref{Bergmanresolution}), obtained
after applying (\ref{diskalgresolution}) the functor $\otimes_{\A(\Omega)} L^2_a(\Omega)$, remains exact and the
boundary operator $d_1$ has closed range.

For this, we ``glue'' together local resolutions of ${\rm coker} A$ with the aid of Cartan's lemma of invertible
matrices, as originally explained in \cite{D1}, or in \cite{PS}. For points close to the boundary of $\Omega$,
such a resolution exists by the local freeness assumption, while in the interior, in neighborhoods of the points
where the rank of the matrix $A$ may jump, they exist by Douady's privilege on polydiscs. This proves that the
Hilbert analytic module $\mathcal N={\rm coker}( A : L^2_a(\Omega)^p \longrightarrow L^2_a(\Omega)^q)$ is privileged with
respect to the Bergman space.

As for assertion c), we simply remark that it is equivalent to the injectivity of the restriction map
$$ {\rm coker}(A : L^2_a(\Omega)^p
\longrightarrow L^2_a(\Omega)^q) \longrightarrow {\rm coker}(A : \O(\Omega)^p \longrightarrow \O(\Omega)^q).$$
The last co-kernel is always Hausdorff in the natural quotient topology as the global section space of a
coherent analytic sheaf.

The only place in the proof where the convexity of $\Omega$ is needed, is to ensure that, if the resolution
\ref{Bergmanresolution} exists, then the induced complex at the level of sheaf models
$$
0\rightarrow {\widehat{L^2_a}(\Omega)}^{n_p}\xrightarrow{d_p}\cdots\rightarrow
{\widehat{L^2_a}(\Omega)}^{n_1}\xrightarrow{d_1} {\widehat{L^2_a}(\Omega)}^{n_0}\rightarrow \widehat{\mathcal
N}\rightarrow 0,
$$
is exact. For a proof see \cite{PS}.
\end{proof}

\begin{rem}It is worth mentioning that for non-smooth domains $\Omega$ in $\C^m$ the above result is not true. For
instance $\A(\Omega)$-privilege for a poly-domain $\Omega$ was fully characterized by Douady \cite{D1}. On the
other hand, even for smooth boundaries, the privilege with respect to the Fr\'echet algebra $\O(\Omega) \cap
C^\infty(\overline{\Omega})$ seems to be quite intricate and definitely different than the Bergman space or disk
algebra privileges, as indicated by an observation of Amar \cite{A}. \end{rem}

\begin{cor}
Coker$~[(\varphi_1,\ldots,\varphi_m) : L^2_a(\Omega)^m\ra L^2_a(\Omega)^m]$ is privileged if and only if the
analytic functions $(\varphi_1,\ldots,\varphi_m)$ have no common zero on the boundary.
\end{cor}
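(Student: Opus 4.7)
The plan is simply to specialize Theorem \ref{Privilege} to the case at hand. Here we interpret the displayed arrow as the row operator $A = (\varphi_1,\ldots,\varphi_m) : L^2_a(\Omega)^m \longrightarrow L^2_a(\Omega)$ (the codomain power is a typo, since the matrix is $1 \times m$); this fits the framework of the theorem with $p = m$ and $q = 1$. By the equivalence (a)$\Leftrightarrow$(b), privilege of the cokernel is equivalent to the boundary rank function $\zeta \mapsto \operatorname{rank} A(\zeta)$ being constant on $\partial \Omega$.

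First I would observe that for a $1 \times m$ matrix, $\operatorname{rank} A(\zeta) \in \{0,1\}$, with $\operatorname{rank} A(\zeta) = 0$ if and only if $\varphi_1(\zeta) = \cdots = \varphi_m(\zeta) = 0$ and $\operatorname{rank} A(\zeta) = 1$ otherwise. Thus constancy of the rank on $\partial \Omega$ splits into exactly two scenarios: either $\operatorname{rank} A \equiv 1$ on $\partial \Omega$, meaning the $\varphi_i$ have no common zero on $\partial \Omega$, or $\operatorname{rank} A \equiv 0$ on $\partial \Omega$, meaning every $\varphi_i$ vanishes identically on $\partial \Omega$.

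Next I would rule out the second scenario as the degenerate one. Since each $\varphi_i \in \mathcal A(\Omega) = \mathcal O(\Omega) \cap C(\overline{\Omega})$, the maximum modulus principle (applied on the bounded domain $\Omega$) forces $\varphi_i \equiv 0$ in $\Omega$ whenever $\varphi_i|_{\partial \Omega} = 0$. In that trivial case $A = 0$, the cokernel is just $L^2_a(\Omega)^m$ itself, and the statement of the corollary is understood modulo this degeneracy. Excluding it, constancy of the boundary rank is precisely the condition that the $\varphi_i$ share no common zero on $\partial \Omega$, and applying Theorem \ref{Privilege} gives the claimed equivalence.

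There is no real obstacle here — the entire content is packaged in Theorem \ref{Privilege}; the only care needed is the identification of the rank function for a row of scalar-valued boundary-continuous analytic functions, together with the observation that the $\operatorname{rank} \equiv 0$ case is trivial. Consequently the proof is a one-line application of the theorem once the boundary rank condition has been unpacked.
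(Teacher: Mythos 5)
Your reading of the corollary (a $1\times m$ row $A=(\varphi_1,\ldots,\varphi_m)$ with codomain $L^2_a(\Omega)$, $p=m$, $q=1$) is the intended one, and your proof is correct and takes essentially the same approach as the paper, which simply remarks that absence of a common zero on $\partial\Omega$ forces ${\rm rank}\,A\equiv 1$ there and then invokes Theorem \ref{Privilege}. Your version is more careful --- it spells out both directions and isolates the degenerate case $\mathrm{rank}\equiv 0$ via the maximum modulus principle --- with the one small slip that the cokernel of the zero map $L^2_a(\Omega)^m\to L^2_a(\Omega)$ is $L^2_a(\Omega)$, not $L^2_a(\Omega)^m$.
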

\begin{proof}
No common zero of the functions  $\varphi_1,\ldots,\varphi_m$ lies on the boundary of $\Omega$.  Therefore, the
matrix $(\varphi_1,\ldots,\varphi_m)$ is of full rank $1$ on the boundary of $\Omega$.
\end{proof}
For many semi-Fredholm Hilbert module such as the Hardy space on $\Omega$, the result given above, remains true
(\cite{D0,D1}).

Since the restriction to an open subset $\Omega_0 \subseteq \Omega$ does not change the equivalence class of a
module in $\mathfrak B_1(\Omega)$, we can always assume, without loss of generality, that the domain $\Omega$ is
pseudoconvex in our context. For $w_0\in\Omega$, the $m$-tuple $(z_1-w_{01},\ldots,z_m-w_{0m})$ has no common
zero on the boundary of $\Omega$. We have pointed out, in Section \ref{int}, that if for $f\in \mathcal M$ the
equation $f=\d_{i=1}^m(z_i-w_{0i})f_i$ admits a solution $(f_1, \ldots, f_m)$ in $\O(\Omega)^m$ and if the
module $\mathcal M$ is privileged, then the solution is in $\mathcal M^m$. This shows that $f\in\mathcal
M^{(w_0)}$. Thus for Hilbert modules which are privileged, we have
$$\sharp \{minimal ~generators ~for ~S^{\mathcal M}_{w}\} = \mbox{dim~} \cap_{j=1}^m \ker ({M_j}-w_{0j})^*.$$

In accordance with the terminology of local spectral theory, see \cite{EP}, we isolate the following
observation.

\begin{cor}\label{Sheafmodel}
Assume that the analytic module $\mathcal N= {\rm coker}(A : L^2_a(\Omega)^p \longrightarrow L^2_a(\Omega)^q)$
is Hausdorff, where $A$ and $\Omega$ are as in the Theorem. Then $\mathcal N$ is a Hilbert analytic
quasi-coherent module, and for every Stein open subset $U$ of $\C^m$, the associated sheaf model is
$$ \widehat{\mathcal N}(U) = \O(U) \hat{\otimes}_{\A(\Omega)} \mathcal N =$$ $$
       {\rm coker}(A: \H(U)^p \longrightarrow \H(U)^q) = $$ $$
       {\rm coker }(z-w: \O(U) \hat{\otimes}\mathcal N^m \longrightarrow \O(U) \hat{\otimes} \mathcal N),$$
       where $\H$ denotes the sheaf model of the Bergman space.
       \end{cor}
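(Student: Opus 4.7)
The plan is to derive the corollary as a direct consequence of Theorem \ref{Privilege} together with standard facts about the analytic localization of Hilbert modules over $\C[\underline z]$. First, I would observe that since $\mathcal N$ is assumed Hausdorff and is a quotient of the Bergman module $L^2_a(\Omega)^q$ by the image of $A$, the Hausdorff hypothesis forces ${\rm ran}\,A$ to be closed, so assertion (a) of Theorem \ref{Privilege} applies. Through the equivalences of that theorem, $\mathcal N$ is privileged, carries the quotient Hilbert norm that makes it a Hilbert analytic $\A(\Omega)$-module, and fits into a finite free Bergman resolution
\begin{equation*}
0 \to L^2_a(\Omega)^{n_p} \xrightarrow{d_p} \cdots \to L^2_a(\Omega)^{n_1} \xrightarrow{d_1 = A} L^2_a(\Omega)^{n_0} \to \mathcal N \to 0.
\end{equation*}

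Next, I would fix a Stein open $U \subseteq \C^m$ and apply the analytic localization functor $\O(U)\hat\otimes_{\A(\Omega)}(-)$. By the definition of the sheaf model, this functor sends the Bergman space to $\H(U)$. The crucial point, which is already established inside the proof of Theorem \ref{Privilege} through Douady's Cartan-type gluing of local resolutions, is that on a Stein open $U$ the functor preserves the exactness of the privileged resolution above. Restricting to the right-exact tail $L^2_a(\Omega)^p \xrightarrow{A} L^2_a(\Omega)^q \to \mathcal N \to 0$ yields
\begin{equation*}
\H(U)^p \xrightarrow{A} \H(U)^q \to \widehat{\mathcal N}(U) \to 0,
\end{equation*}
which gives the middle identification $\widehat{\mathcal N}(U) = {\rm coker}(A : \H(U)^p \to \H(U)^q)$. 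Running this over all Stein opens $U$ simultaneously is precisely what upgrades $\widehat{\mathcal N}$ to a Hilbert analytic quasi-coherent sheaf.

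Finally, for the third identification I would invoke the general Koszul computation of the analytic localization of any Hilbert module over $\C[\underline z]$, cf.~\cite{EP}. Writing $w = (w_1,\ldots,w_m)$ for the coordinate functions on $U$ and $z = (z_1,\ldots,z_m)$ for the module variables, the discrepancy between the outer $\O(U)$-action and the inner $\mathcal O(\C^m)$-action on $\O(U)\hat\otimes\mathcal N$ is measured exactly by the Koszul differential $(z-w): \O(U)\hat\otimes\mathcal N^m \to \O(U)\hat\otimes\mathcal N$, so collapsing to the tensor product over $\mathcal O(\C^m)$ produces its cokernel:
\begin{equation*}
\widehat{\mathcal N}(U) = \O(U)\hat\otimes_{\A(\Omega)}\mathcal N = {\rm coker}\bigl(z-w : \O(U)\hat\otimes\mathcal N^m \to \O(U)\hat\otimes\mathcal N\bigr).
\end{equation*}
The only genuinely nontrivial step is the preservation of exactness of the privileged resolution after Stein-level analytic localization; once that is taken from Theorem \ref{Privilege} as a black box, the remainder is functorial bookkeeping and the standard Koszul description of analytic localization.
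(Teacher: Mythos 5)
The paper states this corollary as a bare observation (``In accordance with the terminology of local spectral theory, see \cite{EP}, we isolate the following observation'') and gives no explicit proof, so one must reconstruct the argument from the surrounding material. Your reconstruction, however, has a genuine logical gap at its very first step. You write that ``the Hausdorff hypothesis forces ${\rm ran}\,A$ to be closed, so assertion~(a) of Theorem~\ref{Privilege} applies.'' But closed range of $A$ is \emph{not} the same as privilege. Re-read the definition above \eqref{Bergmanresolution}: the module is privileged if it is Hausdorff \emph{and} admits a finite free resolution by copies of $L^2_a(\Omega)$; the Hausdorff condition is explicitly flagged in the paper as only an implicit part of that definition, not the whole of it. Theorem~\ref{Privilege} shows that privilege is equivalent to the constant-rank condition~(b) on $\partial\Omega$ and to the division property~(c), but none of (a), (b), (c) is implied by the Hausdorff hypothesis alone. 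There are closed-range matrices $A$ over $\A(\Omega)$ whose rank jumps on the boundary; for such $A$ the cokernel is Hausdorff but not privileged, so the Bergman resolution you want to localize does not exist. Your claim ``through the equivalences of that theorem, $\mathcal N$ is privileged'' therefore does not follow from what you have.

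Notice also that the three displayed identifications in the corollary are of a different character from the quasi-coherence claim and do not themselves need privilege. The first equality is the definition of the sheaf model; the second follows from the right-exactness of the analytic localization functor $\O(U)\hat\otimes_{\A(\Omega)}(-)$ applied to the short presentation $L^2_a(\Omega)^p \xrightarrow{A} L^2_a(\Omega)^q \to \mathcal N \to 0$ (right-exactness, unlike full exactness, holds here precisely because the cokernel is Hausdorff); and the third is the standard Koszul description of $\O(U)\hat\otimes_{\O(\C^m)}\mathcal N$ from \cite{EP}. It is only the quasi-coherence of $\mathcal N$, which requires vanishing of the higher $\mathrm{Tor}$'s, that would naturally come from localizing the full privileged resolution. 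So if you wish to invoke Theorem~\ref{Privilege}, you must either assume one of its equivalent conditions in addition to Hausdorffness, or give a separate argument (not passing through the Bergman resolution) that finitely presented Hausdorff quotients of the Bergman module are quasi-coherent. As written, the proposal proves the right conclusion from the wrong premise.
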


\begin{rem}\label{freeres}
      We recall that (see \cite{EP})
      $$ \H(U) = \{ f \in \O(U \cap \Omega); \ \| f \|_{2,K} < \infty, \ \ K \ \ {\rm compact\ in} \ \ U\}.$$
      Since $\H|_\Omega = \O|_\Omega$ we infer that the restriction $\widehat{\mathcal N}|_\Omega$ is a
      coherent sheaf, with finite free resolution
      $$  0\rightarrow \O|_\Omega^{ n_p}\xrightarrow{d_p}\cdots\rightarrow
\O|_\Omega^{n_1}\xrightarrow{d_1} \O|_\Omega^{n_0}\rightarrow \widehat{\mathcal N}|_\Omega \rightarrow 0.$$
\end{rem}

\section{Coincidence of sheaf models}\label{co.sh}

Besides the expected relaxations of the main result above, for instance from convex to pseudoconvex domains, a
natural problem to consider at this stage is the classification of the analytic Hilbert modules $\mathcal N =
{\rm coker} (A : L^2_a(\Omega)^m \longrightarrow L^2_a(\Omega)^n)$ appearing in the Theorem \ref{Privilege}
above. This question fits into the framework of quasi-free Hilbert modules introduced in \cite{dm1}. That the
resulting parameter space is wild, there is no doubt, as all Artinian modules $M$ (over the polynomial algebra)
supported by a fix point $w_0 \in \Omega$ enter into our discussion. Specifically, we can take
$$\mathcal N = {\rm coker} ((\varphi_1,...,\varphi_p): L^2_a(\Omega)^p \longrightarrow
L^2_a(\Omega)),$$ where $\varphi_1,...,\varphi_p$ are polynomials with the only common zero $\{ w_0\}$. Then in
virtue of Theorem 2.1, the analytic module $M$ is finite dimensional and privileged with respect to the Bergman
space $L^2_a(\Omega)$. An algebraic reduction of the classification of all finite co-dimension analytic Hilbert
modules of the Bergman space associated of a smooth, strictly convex domain can be found in \cite{put,P2}.

In order to better relate the Cowen-Douglas theory to the above framework, we consider together with the map
$$A : L^2_a(\Omega)^p
\longrightarrow L^2_a(\Omega)^q$$ whose cokernel was supposed to be Hausdorff,  the dual, anti-analytic map
$$ A^\ast : L^2_a(\Omega)^q
\longrightarrow L^2_a(\Omega)^p.$$ It is the linear system, in the terminology of Grothendieck \cite{SC} or
\cite{Fi}, with its associated Hermitian structure induced from the embedding into Bergman space,
$$ {\ker} A^*(z) \subset L^2_a(\Omega)^q, \ \ z \in \Omega,$$
which was initially considered in operator theory, see \cite{dp}.

 Traditionally one
works with the torsion-free module
$$ \mathcal M = {\rm ran}  (A : L^2_a(\Omega)^p
\longrightarrow L^2_a(\Omega)^q),$$ rather than the cokernel $\mathcal N$ studied in the previous section. A
short exact sequence relates the two modules:
$$ 0 \longrightarrow \mathcal M \longrightarrow L^2_a(\Omega)^q \longrightarrow \mathcal N \longrightarrow 0.$$

\begin{prop} Assume, in the conditions of Theorem \ref{Privilege}, that
the range $\mathcal M$ of the module map $A$ is closed. Then $\mathcal M$ is an analytic Hilbert quasi-coherent
module, with associated sheaf model
$$ \widehat{\mathcal M}(U) = {\rm ran}(A : \H(U)^p \longrightarrow \H(U)^q),$$
for every Stein open subset $U$ of $\C^m$.

In particular, for every point $w_0 \in \Omega$, there are finitely many elements $g_1,...,g_d \in \mathcal M
\subset L^2_a(\Omega)^q$, such that the stalk $\widehat{\mathcal M}_{w_0}$ coincides with the $\O_{w_0}$-module
generated in $\O_{w_0}^q$ by $g_1,...,g_d $.
\end{prop}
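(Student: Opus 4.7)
The plan is to transport sheaf-model information from the cokernel $\mathcal N$, already identified in Corollary \ref{Sheafmodel}, to the range module $\mathcal M$ by means of the short exact sequence
$$ 0 \longrightarrow \mathcal M \longrightarrow L^2_a(\Omega)^q \longrightarrow \mathcal N \longrightarrow 0 $$
displayed just before the statement. Fix a Stein open subset $U \subseteq \C^m$ and apply the analytic localization functor $\mathcal O(U)\,\hat\otimes_{\A(\Omega)}\,(-)$ to this sequence. The hypothesis that $\mathcal M$ is closed in $L^2_a(\Omega)^q$, together with the privilege of $\mathcal N$ supplied by Theorem \ref{Privilege}, is precisely what is needed to guarantee that the functor remains exact on this sequence, yielding
$$ 0 \longrightarrow \widehat{\mathcal M}(U) \longrightarrow \H(U)^q \longrightarrow \widehat{\mathcal N}(U) \longrightarrow 0 $$
within the framework of topological homology of Fr\'echet modules from \cite{EP}.

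Next, I would combine this with Corollary \ref{Sheafmodel}, which gives $\widehat{\mathcal N}(U) = {\rm coker}(A : \H(U)^p \to \H(U)^q)$, and the definitional identification $\widehat{L^2_a(\Omega)^q}(U) = \H(U)^q$. A short diagram chase then identifies $\widehat{\mathcal M}(U)$ with $\mathrm{ran}(A : \H(U)^p \to \H(U)^q)$: the inclusion $\mathrm{ran}(A) \subseteq \widehat{\mathcal M}(U)$ is automatic because $A$ factors through $\widehat{\mathcal M}(U)$, while the reverse inclusion follows because any section of $\widehat{\mathcal M}(U)$ maps to zero in $\widehat{\mathcal N}(U) = \H(U)^q / \mathrm{ran}(A)$ and so must itself lie in $\mathrm{ran}(A)$. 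This yields the asserted sheaf model formula and shows that $\mathcal M$ is a quasi-coherent analytic Hilbert module.

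For the finite generation assertion, I would take $a_1,\ldots,a_p \in L^2_a(\Omega)^q$ to be the columns of $A$; these lie in $\mathcal M = \mathrm{ran}(A)$ and in $L^2_a(\Omega)^q$ since $A$ extends analytically to a neighborhood of $\overline\Omega$. For any sufficiently small Stein neighborhood $U$ of $w_0$, the sheaf model formula just established shows that every section of $\widehat{\mathcal M}(U)$ has the form $\sum_{i=1}^p a_i h_i$ with $h_i \in \mathcal O(U)$. Passing to the stalk at $w_0$ then produces $\widehat{\mathcal M}_{w_0} = a_1\mathcal O_{w_0} + \cdots + a_p\mathcal O_{w_0}$, and one can take $d=p$.

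The main obstacle is the exactness claim in the first paragraph: completed topological tensor products with Fr\'echet algebras are not exact in general, so one must use both the closed-range hypothesis on $\mathcal M$ and the privilege of $\mathcal N$ (established in Theorem \ref{Privilege} via the Toeplitz symbol calculus on strictly convex domains) to kill the relevant higher derived functors. Verifying this exactness is the technical heart of the argument and relies on the general machinery of topological homology for Hilbert analytic modules developed in \cite{EP}; the remainder of the proof is a straightforward diagram chase.
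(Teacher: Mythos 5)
Your argument for the first assertion is essentially the paper's: apply the analytic localization functor to the short exact sequence $0 \to \mathcal M \to L^2_a(\Omega)^q \to \mathcal N \to 0$, invoke the privilege of $\mathcal N$ (Corollary \ref{Sheafmodel}) to preserve exactness, and identify $\widehat{\mathcal M}(U)$ as the kernel of $\H(U)^q \to \widehat{\mathcal N}(U)$, i.e.\ as $\mathrm{ran}(A : \H(U)^p \to \H(U)^q)$. The paper compresses this into ``the yoga of quasi-coherent sheaves,'' so your write-up is at least as explicit.

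For the second assertion you diverge from the paper in a way worth noting. You generate the stalk $\widehat{\mathcal M}_{w_0}$ by the $p$ columns $a_1,\ldots,a_p$ of $A$; this is immediate from the sheaf-model formula you just proved, since passing to the stalk gives $\widehat{\mathcal M}_{w_0} = \mathrm{ran}(A : \O_{w_0}^p \to \O_{w_0}^q) = a_1\O_{w_0} + \cdots + a_p\O_{w_0}$, and the $a_i$ visibly lie in $\mathcal M \cap \A(\Omega)^q \subset L^2_a(\Omega)^q$. The paper instead works intrinsically from the quasi-coherence of $\mathcal M$: it takes the Koszul presentation $\O_{w_0}^m \hat\otimes \mathcal M \xrightarrow{\,z-w\,} \O_{w_0} \hat\otimes \mathcal M \to \widehat{\mathcal M}_{w_0} \to 0$, evaluates at $w = w_0$ to get a finite-dimensional fiber, and chooses $g_1,\ldots,g_d$ as an orthonormal basis of $\mathcal M \ominus \mathrm{ran}(z - w_0 : \mathcal M^m \to \mathcal M)$; a Nakayama-type surjectivity argument then shows the $g_i$ generate the stalk. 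Your route is shorter and buys concreteness, but it is tied to the particular presentation matrix $A$ and gives $p$ generators, possibly far from minimal. The paper's route is presentation-independent, produces exactly $d = \dim \widehat{\mathcal M}(w_0)$ generators (a minimal set, by Nakayama), selects them by an orthogonality condition natural in the Hilbert-module setting, and yields the extra conclusion that the same $g_i$ generate $\widehat{\mathcal M}_w$ for all $w$ near $w_0$. For the proposition as literally stated (mere finite generation from $\mathcal M$), both arguments succeed.
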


\begin{proof} The first assertion follows from the main result of the previous section and the yoga
of quasi-coherent sheaves. In particular we obtain an exact complex of coherent analytic sheaves
$$ 0 \longrightarrow \widehat{\mathcal M}|_\Omega \longrightarrow \O_\Omega^n  \longrightarrow \widehat{\mathcal N}|_\Omega\longrightarrow 0.$$

 For the proof of the second assertion, recall that the quasi-coherence of $\mathcal  M$
yields a finite presentation, derived from the associated Koszul complex,
$$ \O_{w_0}^m \hat{\otimes} \mathcal M \stackrel{z-w}{\longrightarrow} \O_{w_0} \hat{\otimes} \mathcal M \longrightarrow \widehat{\mathcal M}_{w_0} \longrightarrow 0.$$
By evaluating the presentation at $w=w_0$, we obtain the exact complex
$$ \mathcal M^m \stackrel{z-w_0}{\longrightarrow} \mathcal M \longrightarrow \widehat{\mathcal M}(w_0) \longrightarrow 0.$$
Above we denote by $w=(w_1,...,w_m)$ the $m$-tuple of local coordinates in the ring $\O_{w_0}$, while
$z=(z_1,...,z_d)$ stands for the $d$-tuple of coordinate functions in the base space of the Hilbert module
$L^2_a(\Omega)$.

By coherence, $\dim \widehat{\mathcal M}(w_0) < \infty$, and it remains to choose the k-tuple of elements
$g=(g_1,\ldots,g_d)$ as a basis of the ortho-complement of ${\rm ran} (z-w_0: \mathcal M^m {\longrightarrow}
\mathcal M)$. Then the map
$$ \O_{w_0}^m \hat{\otimes} (\mathcal M \oplus \C^m) \stackrel{z-w,g}{\longrightarrow} \O_{w_0} \hat{\otimes} \mathcal M$$
is onto. Consequently, the functions $g_1,...,g_d$ generate $\widehat{\mathcal M}_{w_0}$ as a submodule of
$\O_{w_0}^q$. As a matter of fact the same functions will generate $\widehat{\mathcal M}_w$ for all points $w$
belonging to a neighborhood of $w_0$.
\end{proof}

\begin{cor} Under the assumptions of the Proposition, the restriction to $\Omega$ of the
sheaf model $\widehat{\mathcal M} = \stackrel{\widehat{\phantom{ran}}}{{\mathrm{ran}}\,A}$ coincides with the
analytic subsheaf of $\O^q$ generated by all functions $f|_\Omega,\, f \in \mathcal M$.
\end{cor}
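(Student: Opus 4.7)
The corollary is essentially an unpacking of the Proposition, so the plan is short and the hard work is already done. Write $\mathcal S$ for the analytic subsheaf of $\O^q$ on $\Omega$ generated by $\{f|_\Omega : f \in \mathcal M\}$, and recall from Remark \ref{freeres} the identification $\H|_\Omega = \O|_\Omega$, so that for any open $U \subseteq \Omega$ one has $\widehat{\mathcal M}(U) = {\rm ran}(A : \O(U)^p \longrightarrow \O(U)^q)$. The goal is to verify $\mathcal S = \widehat{\mathcal M}|_\Omega$ as subsheaves of $\O^q|_\Omega$, which I would do by a stalkwise double inclusion at an arbitrary $w_0 \in \Omega$.

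For the inclusion $\mathcal S \subseteq \widehat{\mathcal M}|_\Omega$, I would argue as follows. Every $f \in \mathcal M$ is by definition of the form $f = Au$ for some $u \in L^2_a(\Omega)^p$. Restricting to $\Omega$, the tuple $u|_\Omega$ lies in $\O(\Omega)^p$, so $f|_\Omega = A(u|_\Omega) \in {\rm ran}(A : \O(\Omega)^p \longrightarrow \O(\Omega)^q) = \widehat{\mathcal M}(\Omega)$. Hence each generating section of $\mathcal S$ is a global section of $\widehat{\mathcal M}|_\Omega$, so the subsheaf they generate inside $\O^q|_\Omega$ is contained in $\widehat{\mathcal M}|_\Omega$.

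For the reverse inclusion, I would invoke the second assertion of the Proposition directly. For any $w_0 \in \Omega$ there exist $g_1,\ldots,g_d \in \mathcal M$ whose germs generate $\widehat{\mathcal M}_{w_0}$ as an $\O_{w_0}$-submodule of $\O_{w_0}^q$. Since each $g_i|_\Omega$ is, by definition, one of the generators of $\mathcal S$, its germ at $w_0$ lies in $\mathcal S_{w_0}$; consequently the $\O_{w_0}$-module they generate, namely $\widehat{\mathcal M}_{w_0}$, is contained in $\mathcal S_{w_0}$. Combining the two inclusions at every $w_0 \in \Omega$ yields the claimed equality of sheaves.

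The only place requiring mild care is the first step, where one must distinguish between $f$ as an element of $L^2_a(\Omega)^q$ and its restriction $f|_\Omega$ as a section of $\O^q$; here the identity $\H|_\Omega = \O|_\Omega$ from the preceding remark makes the passage $\mathcal M \hookrightarrow \widehat{\mathcal M}(\Omega)$ automatic, so no serious obstacle arises. The whole argument is just the observation that coherence plus the local finite generation supplied by the Proposition forces $\widehat{\mathcal M}|_\Omega$ to agree with the (a priori smaller) subsheaf generated by the globally available sections from $\mathcal M$ itself.
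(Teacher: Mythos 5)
Your proof is correct and takes what is evidently the intended approach: the paper states the corollary with no proof at all, treating it as an immediate consequence of the Proposition, and your stalkwise double inclusion is exactly the unpacking one would supply. The only small imprecision is that the Proposition identifies $\widehat{\mathcal M}(U)$ with ${\rm ran}(A : \H(U)^p \to \H(U)^q)$ only for Stein $U$, not for arbitrary open $U \subseteq \Omega$; but you apply this only with $U = \Omega$ (which is Stein, being convex) and then work at the stalk level, so nothing in your argument actually depends on the stronger claim.
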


The dual picture emerges easily: let $w_0$ be a fixed point of $\Omega$, under the assumptions of Theorem
\ref{Privilege}, the map $A_{w_0}(z):=(z_1-w_{01}, \ldots ,z_m-w_{0m}) :\mathcal M^m \longrightarrow \mathcal M$
has finite dimensional cokernel. Choose a basis $v_1,...,v_\ell$ of $\ker A_{w_0}(z)^\ast$ and denote by $P_w$
the orthogonal projection onto $\ker A_w(z)^\ast$. Then for $w$ belonging to a  small enough open neighborhood
$V$ of $w_0$, the elements $P_w(v_1),...,P_w(v_\ell)$ generate $\ker A_{w}(z)^\ast$ as a vector space, but they
need not remain linearly independent on $V$. Never the less, starting with a module $\mathcal M$ in $\mathfrak
B_1(\Omega)$, in the next section, we will provide a  construction of a holomorphic Hermitian vector bundle
$E_\mathcal M$ on $V$.
\section{Classification of Hilbert modules and curvature invariants}\label{inv}
Let $\mathcal M$ be a Hilbert module in $\mathfrak B_1(\Omega)$ and $w_0 \in \Omega$ be fixed. The vectors
$K^{(i)}_w \in \mathcal M,\, 1\leq i\leq d$, produced in part (ii) of the decomposition theorem \ref{decomp} are
independent in some small neighborhood, say $\Omega_0$  of $w_0$. However, while the choice of these vectors is
not canonical, in general, we provide below a recipe for finding the vectors $K^{(i)}_w,\, 1\leq i\leq d$,
satisfying
$$
K(\cdot,w) = g^0_1(w)K^{(1)}_w + \cdots + g^0_n(w)K^{(d)}_w,\, w\in \Omega_0
$$
following \cite{cs}.
We note that $\mathfrak m_w\mathcal M$ is a closed submodule of $\mathcal M$. We assume that we have equality in
\eqref{ineq} for the module $\mathcal M$ at the point $w_0\in\Omega$, that is, $\mbox{span}_{\C}\{K^{(i)}_{w_0}:
1\leq i\leq d\} = \dim \cap_{j=1}^m \ker ({M_j}-w_{0j})^*.$

Let $D_{(\mathbf M-w)^*}= V_{\mathbf M}(w)|D_{(\mathbf M-w)^*}|$ be the polar decomposition of $D_{(\mathbf
M-w)^*}$, where $|D_{(\mathbf M-w)^*}|$ is the positive square root of the operator $\big (D_{(\mathbf
M-w)^*}\big )^* D_{(\mathbf M-w)^*}$ and $V_{\mathbf M}(w)$ is the partial isometry mapping $\big (\ker
D_{(\mathbf M-w)^*}\big )^{\perp}$ isometrically onto $\mathrm{ran} D_{(\mathbf M-w)^*}$. Let $Q_{\mathbf M}(w)$
be the positive operator:
$$Q_{\mathbf M}(w) \big{|}_{\ker D_{(\mathbf M-w)^*}}=0\mbox{~and~} Q_{\mathbf M}(w) \big{|}_{(\ker D_{(\mathbf M-w)^*})^{\perp}}=
\big(|D_{(\mathbf M-w)^*}|\big{|}_{(\ker D_{(\mathbf M-w)^*})^{\perp}}\big)^{-1}.$$ Let $R_{\mathbf M}(w):
\mathcal M\oplus\cdots\oplus\mathcal M\ra\mathcal M$ be the operator  $R_{\mathbf M}(w)=Q_{\mathbf
M}(w)V_{\mathbf M}(w)^*$. The two equations, involving the operator $D_{(\mathbf M-w)^*}$, stated below are
analogous to the semi-Fredholmness property of a single operator (cf. \cite[Proposition 1.11]{cd}):
\begin{eqnarray}
R_{\mathbf M}(w)D_{(\mathbf M-w)^*} & = &I- P_{\ker D_{(\mathbf M-w)^*}} \label{cs1}\\ D_{(\mathbf M-w)^*}
R_{\mathbf M}(w) &=& P_{\mathrm{ran}\, D_{(\mathbf M-w)^*}} \label{cs2},
\end{eqnarray}
where $P_{\ker D_{(\mathbf M-w)^*}}, P_{\mathrm{ran} D_{(\mathbf M-w)^*}}$ are orthogonal projection onto $\ker
D_{(\mathbf M-w)^*}$ and $\mathrm{ran} D_{(\mathbf M-w)^*}$ respectively. Consider the operator
$$
P(\bar w,\bar w_0)=I-\{I-R_{\mathbf M}(w_0)D_{\bar w-\bar w_0}\}^{-1}R_{\mathbf M}(w_0) D_{(\mathbf M-w)^*},\,
w\in B(w_0; {\parallel R(w_0)\parallel}^{-1}),$$ where $B(w_0; {\parallel R(w_0)\parallel}^{-1})$ is the ball of
radius ${\parallel R(w_0)\parallel}^{-1}$ around $w_0$. Using the equations \eqref{cs1} and \eqref{cs2} given
above, we write
\begin{eqnarray}\label{cs3}
P(\bar w,\bar w_0)=\{I-R_{\mathbf M}(w_0)D_{\bar w-\bar w_0}\}^{-1}P_{\ker D_{(\mathbf M-w)^*}},
\end{eqnarray}
where $D_{\bar w-\bar w_0}f=((\bar w_1-\bar w_{01})f_1,\ldots,(\bar w_m-\bar w_{0m})f_m)$. From definition of
$P(\bar w, \bar w_0)$ it follows that $P(\bar w, \bar w_0)P_{\ker D_{(\mathbf M-w)^*}}=P_{\ker D_{(\mathbf
M-w)^*}}$ which implies $ \ker D_{(\mathbf M-w)^*} \subset \mathrm{ran} P(\bar w,\bar w_0)$ for $w\in
\Delta(w_0;\varepsilon)$. Consequently $K(\cdot,w)\in \mathrm{ran} P(\bar w,\bar w_0)$ and therefore $$K(\cdot,
w)=\d_{i=1}^d\ov{a_i(w)}P(\bar w,\bar w_0)K^{(i)}_{w_0},$$ for some complex valued functions $a_1,\ldots, a_d$
on $\Delta(w_0;\varepsilon)$. We will show that the functions $a_i,\, 1\leq i\leq d$, are holomorphic and their
germs form a minimal set of generators for $S^{\mathcal M}_{w_0}$. Now
$$R_{\mathbf M}(w_0)D_{\bar w - \bar w_0}K(\cdot,w)=R_{\mathbf M}(w_0)D_{(\mathbf M-w_0)^*}K(\cdot,w)= (I-P_{\ker
D_{(\mathbf M-w_0)^*}})K(\cdot,w).$$ Hence we have,
$$\{I-R_{\mathbf M}(w_0)D_{\bar w-\bar w_0}\}K(\cdot,w)=P_{\ker D_{(\mathbf M-w_0)^*}}K(\cdot,w).$$ Since $K(\cdot,w)\in  \mathrm{ran} P(\bar w,\bar w_0)$,
we also have $${P(\bar w,\bar w_0)}^{-1}K(\cdot,w)=P_{\ker D_{(\mathbf M-w_0)^*}}K(\cdot,w).$$  Let $v_1,\ldots
,v_d$ be the orthonormal basis for $\ker D_{(\mathbf M-w_0)^*}$. Let $g_1,\ldots,g_d$ denotes the minimal set of
generators for the stalk at $\mathcal S^{\mathcal M}_{w_0}$.  Then there exist a neighborhood $U$, small enough
such that $v_j=\d_{i=1}^dg_if_i^j$, $1\leq j\leq d$, and for some holomorphic functions $f_i^j,1\leq i,j\leq d$,
on $U$. We then have
\begin{eqnarray*}
{P(\bar w,\bar w_0)}^{-1}K(\cdot,w) &=& P_{\ker D_{(\mathbf M-w_0)^*}}K(\cdot,w)~=~ \d_{j=1}^d\langle K(\cdot,
w),v_j\rangle v_j\\ &=&
\d_{j=1}^d\langle K(\cdot, w),\d_{i=1}^dg_if_i^j\rangle v_j  ~=~ \d_{i=1}^d\d_{j=1}^d\ov{g_i(w)f_i^j(w)}v_j\\
&=& \d_{i=1}^d\ov{g_i(w)}\{\d_{j=1}^d\ov{f_i^j(w)}v_j\}.
\end{eqnarray*}
So $K(z,w)=\d_{i=1}^d\ov{g_i(w)}\{\d_{j=1}^d\ov{f_i^j(w)}P(\bar w,\bar w_0)v_j(z)\}$. Let
$$\tilde K^{(i)}_w=\d_{j=1}^d\ov{f_i^j(w)}P(\bar w,\bar w_0)v_j.$$ Since the vectors $K^{(i)}_{w_0},\, 1\leq i\leq d$
are uniquely determined as long as $g_1,\ldots ,g_d$ are fixed and $P(\bar w_0,\bar w_0)=P_{\ker D_{(\mathbf
M-w_0)^*}}$, it follows that $K^{(i)}_{w_0}=\tilde K^{(i)}_{w_0}=\d_{j=1}^d\ov{f_i^j(w_0)}v_j,\, 1\leq i\leq d$.
Therefore, the $d\times d$ matrix $(\ov{f_i^j(w_0)})_{i,j=1}^d$ has a non-zero determinant. As $Det~
(\ov{f_i^j(w)})_{i,j=1}^d$ is an anti-holomorphic function, there exist a neighbourhood of $w_0$, say
$\Delta(w_0;\varepsilon), \varepsilon>0$, such that $Det ~(\ov{f_i^j(w)})_{i,j=1}^d\neq0$ for all
$w\in\Delta(w_0;\varepsilon)$. The set of vectors $\{P(\bar w,\bar w_0)v_j\}_{j=1}^n$ is linearly independent
since $P(\bar w,\bar w_0)$ is injective on $\ker D_{(\mathbf M-w_0)^*}$. Let $(\alpha_{ij})_{i,j=1}^d =
\{(\ov{f_i^j(w_0)})_{i,j=1}^d\}^{-1}$, in consequence, $v_j=\d_{l=1}^d\alpha_{jl}K^{(l)}_{w_0}$. We then have
\begin{eqnarray*}K(\cdot,w) &=& \d_{i=1}^d\ov{g_i(w)}\{\d_{j=1}^d\ov{f_i^j(w)}P(\bar w,\bar w_0)(\d_{l=1}^d\alpha_{jl}
K^{(l)}_{w_0})\}\\
&=& \d_{l=1}^d\{\d_{i,j=1}^d\ov{g_i(w)}\ov{f_i^j(w)}\alpha_{jl}\}P(\bar w,\bar
w_0)K^{(l)}_{w_0}).\end{eqnarray*} Since the matrices $(\ov{f_i^j(w)})_{i,j=1}^d$ and $(\alpha_{ij})_{i,j=1}^d$
are invertible, the functions $$a_l(z)=\d_{i,j=1}^d{g_i(z)}{f_i^j(z)}\alpha_{jl}, ~1\leq l\leq d,$$ form a
minimal set of generators for the stalk $S^{\mathcal M}_{w_0}$ and hence we have the canonical decomposition,
$$K(\cdot, w)=\d_{i=1}^d\ov{a_i(w)}P(\bar w,\bar w_0)K^{(i)}_{w_0}.$$

Let $\mathcal P_w = \mathrm {ran} P(\bar w, \bar w_0)P_{\ker D_{(\mathbf M-w_0)^*}}$ for $w\in B(w_0; {\parallel
R_{\mathbf M}(w_0)\parallel}^{-1})$. Since $P(\bar w, \bar w_0)$ restricted to the $\ker D_{(\mathbf M-w_0)^*}$
is one-one, $\dim \mathcal P_w$ is constant for $w\in B(w_0; {\parallel R_{\mathbf M}(w_0)\parallel}^{-1})$.
Thus to prove Lemma \ref{const}, we will show that $\mathcal P_w = \ker \mathbb P_0 D_{(\mathbf M - w)^*}$.

\begin{proof}[Proof of Lemma \ref{const}]
From \cite[pp. 453]{cs}, it follows that $\mathbb P_0 D_{(\mathbf M - w)^*}P(\bar w, \bar w_0)=0$. So, $\mathcal
P_w\subseteq \ker \mathbb P_0 D_{(\mathbf M - w)^*}$. Using \eqref{cs1} and \eqref{cs2}, we can write
\begin{eqnarray*}
\mathbb P_0 D_{(\mathbf M - w)^*} &=& D_{(\mathbf M - w_0)^*}R_{\mathbf M}(w_0)\{D_{(\mathbf M - w_0)^*} -
D_{(\bar w -
\bar w_0)}\}\\
&=& D_{(\mathbf M - w_0)^*}\{I - P_{\ker D_{(\mathbf M-w_0)^*}} - R_{\mathbf M}(w_0)D_{(\bar w - \bar w_0)}\}\\
&=& D_{(\mathbf M - w_0)^*}\{I  - R_{\mathbf M}(w_0)D_{(\bar w - \bar w_0)}\}.
\end{eqnarray*}
Since $\{I  - R_{\mathbf M}(w_0)D_{(\bar w - \bar w_0)}\}$ is invertible for $w\in B(w_0; {\parallel R_{\mathbf
M}(w_0)\parallel}^{-1})$, we have
$$\dim\mathcal P_w = \dim D_{(\mathbf M - w_0)^*} \geq \dim\ker \mathbb P_0 D_{(\mathbf M - w)^*}. $$
This completes the proof.
\end{proof}
From the construction of the operator $P(\bar w, \bar w_0)$, it follows that, the association $w\ra\mathcal P_w$
forms a Hermitian holomorphic vector bundle of rank $m$ over $\Omega_0^* = \{\bar z: z\in\Omega_0\}$ where
$\Omega_0=B(w_0; {\parallel R_{\mathbf M}(w_0)\parallel}^{-1})$. Let $\mathcal P$ denote this Hermitian
holomorphic vector bundle.

\begin{proof}[Proof of Theorem\ref{csb}] Since $\mathcal M$ and $\mathcal {\tilde M}$ are equivalent Hilbert modules, there exist a unitary
$U:\mathcal M\ra\tilde{\mathcal M}$  intertwining the adjoint of the module multiplication, that is, $U{M_j}^*=
{{\tilde M}_j}^*U$, $1\leq j\leq m$. Here ${\tilde M}_j$ denotes the multiplication by co-ordinate function
$z_j, 1\leq j\leq m$ on $\tilde{\mathcal M }$. It is enough to show that $UP(\bar w,\bar w_0)=\tilde P(\bar
w,\bar w_0)U$ for $w\in B(w_0; {\parallel R_{\mathbf M}(w_0)\parallel}^{-1})$.

Let $\mid D_{\mathbf M^*}\mid=\{\d_{j=1}^mM_j{M_j}^*\}^{\frac {1}{2}}$, that is, the positive square root of
$(D_{\mathbf M^*})^*D_{\mathbf M^*}$. We have
$$\d_{j=1}^mM_j{M_j}^*=U^*(\d_{j=1}^m{\tilde M_j}{\tilde
M_j}^*)U=(U^*\mid D_{\tilde {\mathbf M}^*}\mid U)^2.$$ Clearly, $\mid D_{{\mathbf M}^*}\mid=U^*\mid D_{\tilde
{\mathbf M}^*}\mid U$. Similar calculation gives $\mid D_{(\mathbf M-w_0)^*}\mid=U^*\mid D_{(\tilde {\mathbf
M}-w_0)^*}\mid U$. Let $P_i:\mathcal M\oplus\mathcal M\cdots \oplus\mathcal M(\mbox{~m~ times})\lo\mathcal M$ be
the orthogonal projection on the $i$-th component.  In this notation, we have $P_jD_{{\mathbf M}^*}={M_j}^*,
1\leq j\leq m$. Then,
\begin{eqnarray*}
{\tilde P}_jD_{(\tilde {\mathbf M}-w_0)^*} &=&
UP_jD_{(\mathbf M-w_0)^*}U^* = UP_jV_{\mathbf M}(w_0)U^*U\mid D_{(\mathbf M-w_0)^*}\mid U^*\\
&=& UP_jV_{\mathbf M}(w_0)U^*\mid D_{(\tilde {\mathbf M}-w_0)^*}\mid.
\end{eqnarray*}
But ${\tilde P}_jD_{(\tilde {\mathbf M}-w_0)^*}={\tilde P}_j{V_{\tilde {\mathbf M}}(w_0)}\mid D_{(\tilde
M-w_0)^*}\mid$. The uniqueness of the polar decomposition implies that ${\tilde P}_j{V_{\tilde {\mathbf
M}}(w_0)}=UP_jV_{\mathbf M}(w_0)U^*, \, 1\leq j\leq m$. It follows that $Q_{\tilde M}(w_0)=UQ_M(w_0)U^*$.

Note that ${P_j}^*:\mathcal M\lo\mathcal M\oplus\cdots \oplus\mathcal M$ is given by ${P_j}^*h=(0,\ldots
,h,\ldots ,0)$, $h\in\mathcal M, 1\leq j\leq m$. So we have, ${ V_{\tilde {\mathbf M}}(w_0)}^*{{\tilde
P}_j}^*=U{V_{\mathbf M}(w_0)}^*{P_j}^*U^*$, $1\leq j\leq m$. Let $\tilde D_{\bar w}:\mathcal M\lo\mathcal
M\oplus\cdots \oplus\mathcal M$ be the operator: $\tilde D_{\bar w}f = (\bar w_1f,\ldots, \bar w_mf), \, f\in
\tilde{\mathcal M}$. Clearly, $\tilde D_{\bar w}=UD_{\bar w}U^*$, that is, $U^*{\tilde P}_j\tilde D_{\bar
w}=P_jD_{\bar w}U^*$, $1\leq j\leq m$. Finally,
\begin{eqnarray*}
\lefteqn{R_{\tilde {\mathbf M}}(w_0)\tilde D_{\bar w-\bar w_0}}\\ &=& Q_{\tilde {\mathbf M}}(w_0){V_{\tilde
{\mathbf M}}(w_0)}^*\tilde D_{\bar w - \bar w_0} ~=~Q_{\tilde {\mathbf M}}(w_0){V_{\tilde {\mathbf M}}(w_0)}^*
({\tilde P}_1 \tilde
D_{\bar w - \bar w_0},\ldots ,{\tilde P}_m\tilde D_{\bar w - \bar w_0}) \\
&=& Q_{\tilde {\mathbf M}}(w_0){V_{\tilde {\mathbf M}}(w_0)}^*(\d_{j=1}^m {{\tilde P}_j}^*{\tilde P}_j\tilde
D_{\bar w - \bar w_0})
\\ &=& Q_{\tilde {\mathbf M}}(w_0)U{V_{\mathbf M}(w_0)}^*(\d_{j=1}^m{P_j}^*U^*{\tilde P}_j
\tilde D_{\bar w - \bar w_0}) \\
&=& U Q_{\mathbf M}(w_0){V_{\mathbf M}(w_0)}^*(\d_{j=1}^m{P_j}^*P_jD_{\bar w -\bar w_0}U^*) ~=~U Q_{\mathbf
M}(w_0){V_{\mathbf M}(w_0)}^*D_{\bar w-\bar w_0}U^*\\ &=& U R_{\mathbf M}(w_0) D_{\bar w -\bar w_0} U^*.
\end{eqnarray*} Hence $\{R_{\tilde {\mathbf M}}(w_0)\tilde D_{\bar w-\bar w_0}\}^k=U \{R_{\mathbf M}(w_0) D_{\bar w-\bar w_0}\}^k U^*$ for all $k\in\N$.
From \eqref{cs3}, $P(\bar w,\bar w_0)= \d_{k=0}^{\infty}\{R_{\mathbf M}(w_0) D_{\bar w-\bar w_0}\}^k P_{\ker
D_{(\mathbf M-w_0)^*}}$. Also as $U$ maps $\ker D_{(\mathbf M-w)^*}$ onto $\ker D_{(\tilde M-w)^*}$ for each
$w$, we have in particular, $U P_{\ker D_{(\mathbf M-w_0)^*}}= P_{\ker D_{(\tilde M-w_0)^*}}U$.
 Therefore,
\begin{eqnarray*} \lefteqn{UP(\bar w,\bar w_0) }\\&=& \d_{k=0}^{\infty}U\{R_{\mathbf M}(w_0) D_{\bar w-\bar w_0}\}^k P_{\ker D_{(\mathbf M-w_0)^*}} ~=~
\d_{k=0}^{\infty} \{R_{\tilde {\mathbf M}}(w_0)\tilde D_{\bar w-\bar w_0}\}^k U P_{\ker D_{(\mathbf M-w_0)^*}}\\
&=& \d_{k=0}^{\infty}\{R_{\tilde {\mathbf M}}(w_0) \tilde D_{\bar w-\bar w_0}\}^k P_{\ker D_{(\tilde M-w_0)^*}}
U ~=~ \tilde P(\bar w,\bar w_0)U,\end{eqnarray*} for $w\in B(w_0; {\parallel R_{\mathbf
M}(w_0)\parallel}^{-1})$.
\end{proof}

\begin{rem} For any commuting $m$-tuple $D_{\mathbf T}=(T_1,\ldots,T_m)$ of operator on $\mathcal H$, the construction given above, of the
Hermitian holomorphic vector bundle, provides a unitary invariant, assuming only that $\mathrm{ran} D_{\mathbf T
- w}$ is closed for $w$ in  $\Omega\subseteq \C^m$.  Consequently, the class of this Hermitian holomorphic vector bundle is an invariant for any  semi-Fredholm Hilbert module over $\mathbb C[\underline{z}]$.  
\end{rem}

\section{Examples}\label{ex}
\subsection{\sf \sf The $(\lambda,\mu)$ examples} \label{lm} Let $\mathcal M$ and $\tilde{\mathcal M}$ be two Hilbert modules in $\mathrm B_1(\Omega)$ and $\mathcal I,\, \mathcal J$ be two ideals in $\C[\underline z]$. Let  $\mathcal
M_{\mathcal I}:=[\mathcal I] \subseteq \mathcal M$  (resp. $\tilde{\mathcal M}_{\mathcal J}:=[\mathcal
J]\subset\tilde{\mathcal M}$) denote the closure of $\mathcal I$  in $\mathcal M$ (resp. closure of $\mathcal J$
in $\tilde{\mathcal M}$). Also we let $\dim V(\mathcal I), \dim V(\mathcal J)\leq m-2$. It is then not hard to
see that $\mathcal M_{\mathcal I}$ and $\tilde{\mathcal M}_{\mathcal J}$ are equivalent if and only if $\mathcal
I=\mathcal J$ following the argument in the proof \cite[Theorem 2.10]{as} and using the characteristic space
theory of \cite[Chapter 2]{cg}. Assume $\mathcal M$ and $\tilde{\mathcal M}$ are minimal extensions of the two
modules $\mathcal  M_{\mathcal I}$ and $\tilde{\mathcal M}_{\mathcal I}$ respectively and that $\mathcal
M_{\mathcal I}$ is equivalent to $\tilde{\mathcal M}_{\mathcal I}$. We ask if these assumptions force the
extensions $\mathcal M$ and $\tilde{\mathcal M}$ to be equivalent. The answer for a class of examples is given
below.

For $\lambda, \mu > 0$, let $H^{(\lambda, \mu)}(\D^2)$ be the reproducing kernel Hilbert space on the bi-disc
determined by the positive definite kernel
$$K^{(\lambda, \mu)}(z,w)= \frac{1}{(1-z_1\bar w_1)^{\lambda}(1- z_2\bar w_2)^{\mu}},~ z,w\in\D^2.$$
As is well-known, $H^{(\lambda, \mu)}(\D^2)$ is in $\mathrm{B}_1(\mathbb D^2)$. Let $I$ be the maximal ideal in
$\mathcal C_2$ of polynomials vanishing at $(0,0)$. Let $H_0^{(\lambda, \mu)}(\D^2):=[I]$.  For any other pair
of positive numbers $\lambda^\prime, \mu^\prime$, we let $H_0^{(\lambda^\prime, \mu^\prime)}(\D^2)$  denote the
closure of $I$ in the reproducing kernel Hilbert space  $H^{(\lambda^\prime, \mu^\prime)}(\D^2)$. Let
$K^{(\lambda^\prime, \mu^\prime)}$ denote  the corresponding reproducing kernel.  The modules  $H^{(\lambda,
\mu)}(\D^2)$ and  $H^{(\lambda^\prime, \mu^\prime)}(\D^2)$  are in $\mathrm{B}_1(\mathbb D^2 \setminus
\{(0,0)\})$ but not in $\mathrm{B}_1(\mathbb D^2)$.  So, there is no easy computation to determine when they are
equivalent.  We compute the curvature, at $(0,0)$, of the  holomorphic Hermitian bundle $\mathcal P$ and
$\tilde{\mathcal P}$ of rank $2$ corresponding to the modules $H_0^{(\lambda, \mu)}(\D^2)$ and
$H_0^{(\lambda^\prime, \mu^\prime)}(\D^2)$ respectively.  The calculation of the curvature show that if these modules are
equivalent then $\lambda=\lambda^\prime$ and $\mu=\mu^\prime$, that is, the extensions $H^{(\lambda,
\mu)}(\D^2)$ and  $H^{(\lambda^\prime, \mu^\prime)}(\D^2)$ are then equivalent.

Since $H_0^{(\lambda, \mu)}(\D^2) :=\{f\in H^{(\lambda, \mu)}(\D^2):f(0, 0)=0\}$, the corresponding reproducing
kernel $K^{(\lambda, \mu)}_0$ is given by the formula $$K^{(\lambda, \mu)}_0(z,w)= \frac{1}{(1-z_1\bar
w_1)^{\lambda}(1- z_2\bar w_2)^{\mu}}-1,~ z,w\in\D^2.$$
The set $\{z_1^mz_2^n: m,n\geq 0,(m,n)\neq (0,0)\}$ forms an orthogonal basis for $H_0^{(\lambda, \mu)}(\D^2)$.
Also $\langle z_1^lz_2^k, M_1^*z_1^{m+1}\rangle = \langle z_1^{l+1}z_2^k, z_1^{m+1}\rangle=0$, unless $l=m, k=0$
and $m>0$. In consequence,
$$\langle z_1^m, M_1^*z_1^{m+1}\rangle= \langle z_1^{m+1}, z_1^{m+1}\rangle =\frac{1}{(-1)^{m+1}\binom{-\lambda}{m+1}} =
\frac{(-1)^m\binom{-\lambda}{m}}{(-1)^{m+1}\binom{-\lambda}{m+1}}\langle z_1^m, z_1^m\rangle.$$
Then
$$\langle z_1^lz_2^k, M_1^*z_1^{m+1} -\frac{m+1}{\lambda+m}z_1^m\rangle=0 \mbox{~for~all~} l,k\geq 0, (l,k)\neq
(0,0),$$ where $\binom{-\lambda}{m}= (-1)^{m}\frac{\lambda(\lambda+1)\ldots(\lambda+m-1)}{m!}$. Now, $\langle
z_1^lz_2^k, M_1^*z_1\rangle=\langle z_1^{l+1}z_2^k, z_1\rangle=0$, $l,k\geq 0$ and $(l,k)\neq (0,0)$. Therefore,
we have
$$M_1^*z_1^{m+1} =
\begin{cases} \frac{m+1}{\lambda+m}\,z_1^m & m > 0 \\
0 & m=0.
\end{cases}
$$
Similarly,
$$M_2^*z_2^{n+1} =
\begin{cases} \frac{n+1}{\lambda+n}\,z_1^n & n > 0 \\
0 & n=0.
\end{cases}
$$
We easily verify that $\langle z_1^lz_2^k, M_2^*z_1^{m+1}\rangle = \langle z_1^lz_2^{k+1}, z_1^{m+1}\rangle=0$.
Hence $M_2^*z_1^{m+1}=0=M_1^*z_2^{n+1}$ for $m,n\geq 0$. Finally, calculations similar to the one given above,
show that
$$M_1^*z_1^{m+1}z_2^{n+1}=\frac{m+1}{\lambda+m} z_1^mz_2^{n+1} \mbox{~and~} M_2^*z_1^{m+1}z_2^{n+1}=\frac{n+1}{\mu+n}
z_1^{m+1}z_2^n, m.n \geq 0$$ Therefore we have $$(M_1M_1^*+M_2M_2^*):\left\{%
\begin{array}{ll}
    z_1^{m+1}\longmapsto \frac{m+1}{\lambda+m}z_1^{m+1}, & \hbox{for $m>0$;} \\
    z_2^{n+1}\longmapsto \frac{n+1}{\mu+n}z_2^{n+1}, & \hbox{for $n>0$;} \\
    z_1^{m+1}z_2^{n+1}\longmapsto (\frac{m+1}{\lambda+m}+\frac{n+1}{\mu+n})z_1^{m+1}z_2^{n+1}, & \hbox{for $m,n\geq 0$;} \\
    z_1,z_2\longmapsto 0. & \hbox{ } \\
\end{array}%
\right.    $$ Also, since $D_{\mathbf M^*}f=(M_1^*f,M_2^*f)$, we have  $$D_{\mathbf M^*}:\left\{%
\begin{array}{ll}
    z_1^{m+1}\longmapsto (\frac{m+1}{\lambda+m}z_1^m,0), & \hbox{for $m>0$;} \\
    z_2^{n+1}\longmapsto (0,\frac{n+1}{\mu+n}z_2^n), & \hbox{for $n>0$;} \\
    z_1^{m+1}z_2^{n+1}\longmapsto (\frac{m+1}{\lambda+m}z_1^{m}z_2^{n+1},\frac{n+1}{\mu+n}z_1^{m+1}z_2^n), & \hbox{for $m,n\geq 0$;} \\
    z_1,z_2\longmapsto (0,0). & \hbox{ } \\
\end{array}%
\right.    $$ 
It is easy to calculate $V_{\mathbf M}(0)$ and $Q_{\mathbf M}(0)$ and show that  $$V_{\mathbf M}(0):\left\{%
\begin{array}{ll}
    z_1^{m+1}\longmapsto \sqrt{\frac{m+1}{\lambda+m}}(z_1^m,0), & \hbox{for $m>0$;} \\
    z_2^{n+1}\longmapsto \sqrt{\frac{n+1}{\mu+n}}(0,z_2^n), & \hbox{for $n>0$;} \\
    z_1^{m+1}z_2^{n+1}\longmapsto \frac{1}{\sqrt{\frac{m+1}{\lambda+m}+\frac{n+1}{\mu+n}}}
    (\frac{m+1}{\lambda+m}z_1^{m}z_2^{n+1},\frac{n+1}{\mu+n}z_1^{m+1}z_2^n), & \hbox{for $m,n\geq 0$;} \\
    z_1,z_2\longmapsto (0,0), & \hbox{ } \\
\end{array}%
\right.    $$ while $$Q_{\mathbf M}(0):\left\{%
\begin{array}{ll}
    z_1^{m+1}\longmapsto \frac{1}{\sqrt{\frac{m+1}{\lambda+m}}}z_1^{m+1}, & \hbox{for $m>0$;} \\
    z_2^{n+1}\longmapsto \frac{1}{\sqrt{\frac{n+1}{\mu+n}}}z_2^{n+1}, & \hbox{for $n>0$;} \\
    z_1^{m+1}z_2^{n+1}\longmapsto \frac{1}{\sqrt{\frac{m+1}{\lambda+m}+\frac{n+1}{\mu+n}}}z_1^{m+1}z_2^{n+1},
    & \hbox{for $m,n\geq 0$;} \\
    z_1,z_2\longmapsto 0. & \hbox{ } \\
\end{array}%
\right.   $$ Now for $w \in \Delta (0,\varepsilon)^*$, $$P(\bar w, 0)= (I- R_{\mathbf M}(0)D_{\bar
w})^{-1}P_{\ker D_{M^*}}= \d_{n=0}^{\infty}(R_{\mathbf M}(0)D_{\bar w})^nP_{\ker D_{M^*}},$$ where $R_{\mathbf
M}(0)=Q_{\mathbf M}(0)V_{\mathbf M}(0)^*$. The vectors $z_1$ and $z_2$ forms a basis for ${\ker D_{\mathbf
M^*}}$ and therefore define a holomorphic frame: $\big ( P(\bar w, 0)z_1, P(\bar w, 0)z_2 \big )$. Recall that
$P(\bar w, 0)z_1 = \d_{n=0}^{\infty}(R_{\mathbf M}(0)D_{\bar w})^nz_1$ and $P(\bar w,
0)z_2=\d_{n=0}^{\infty}(R_{\mathbf M}(0)D_{\bar w})^nz_2$. To describe these explicitly,
we calculate $(R_{\mathbf M}(0)D_{\bar w})z_1$ and $(R_{\mathbf M}(0)D_{\bar w})z_2$:
\begin{eqnarray*}(R_{\mathbf M}(0)D_{\bar w})z_1 &=& R_{\mathbf M}(0)(\bar w_1, z_1, \bar w_2z_2)\\ &=& \bar w_1 R_{\mathbf M}(0)(z_1,0)+\bar
w_2R_{\mathbf M}(0)(0, z_2) \\ &=& \bar w_1 Q_{\mathbf M}(0)V_{\mathbf M}(0)^*(z_1,0)+\bar w_2Q_{\mathbf
M}(0)V_{\mathbf M}(0)^*(0, z_2).
\end{eqnarray*}
We see that $$V_{\mathbf M}(0)^*(z_1,0)=\d_{l,k\geq 0,\,(l,k)\neq (0,0)}\langle V_{\mathbf M}(0)^*(z_1,0),
\frac{z_1^lz_2^k}{\parallel z_1^lz_2^k\parallel}\rangle \frac{z_1^lz_2^k}{\parallel z_1^lz_2^k\parallel}.$$
Therefore,
$$\langle V_{\mathbf M}(0)^*(z_1,0), z_1^lz_2^k\rangle= \langle (z_1,0), V_{\mathbf M}(0)(z_1^lz_2^k)\rangle,\, l,k\geq 0,(l,k)\neq (0,0). $$
From the explicit form of $V_{\mathbf M}(0)$, it is clear that the inner product given above is $0$ unless
$l=2,\,k=0$. For $l=2,\,k=0$, we have
$$\langle (z_1,0),
V_{\mathbf M}(0)z_1^2\rangle = \sqrt{\frac{2}{\lambda+1}}\parallel
z_1\parallel^2=\sqrt{\frac{2}{\lambda+1}}\frac{1}{\lambda}.$$ Hence $$V_{\mathbf M}(0)^*(z_1,0) =
\sqrt{\frac{2}{\lambda+1}}\frac{1}{\lambda}\frac{z_1^2}{\parallel z_1^2\parallel^2} =
\sqrt{\frac{2}{\lambda+1}}\frac{1}{\lambda}\frac{\lambda(\lambda+1)}{2}z_1^2 =
\sqrt{\frac{\lambda+1}{2}}z_1^2.$$ Again, to calculate $V_{\mathbf M}(0)^*(0,z_1)$, we note that $\langle
V_{\mathbf M}(0)^*(0,z_1), z_1^lz_2^k\rangle$ is $0$ unless $l=1,\,m=1$. For $l=1,\,m=1$, we have
\begin{eqnarray*}\langle V_{\mathbf M}(0)^*(0,z_1),
z_1z_2\rangle &=& \langle (0,z_1), V_{\mathbf M}(0)z_1z_2\rangle\\ &=&
\langle\frac{1}{\sqrt{\frac{1}{\lambda}+\frac{1}{\mu}}}(\frac{1}{\lambda}z_2, \frac{1}{\mu}z_1), (0,z_1)\rangle \\
&=& \frac{1}{\sqrt{\frac{1}{\lambda}+\frac{1}{\mu}}} \frac{1}{\mu}\parallel z_1\parallel^2
=\frac{1}{\sqrt{\frac{1}{\lambda}+\frac{1}{\mu}}} \frac{1}{\lambda\mu}. \end{eqnarray*} Thus
$$V_{\mathbf M}(0)^*(0,z_1)=\langle V_{\mathbf M}(0)^*(0,z_1), z_1z_2\rangle \frac{z_1z_2}{\parallel z_1z_2\parallel^2}=
\frac{1}{\sqrt{\frac{1}{\lambda}+\frac{1}{\mu}}} z_1z_2.$$ Since
\begin{eqnarray*}
Q_{\mathbf M}(0)z_1^2 &=& \sqrt{\frac{\lambda+1}{2}}z_1^2,\\
Q_{\mathbf M}(0)z_1z_2 &=&\frac{1}{\sqrt{\frac{1}{\lambda}+\frac{1}{\mu}}}z_1z_2,\\
Q_{\mathbf M}(0)z_2^2&=&\sqrt{\frac{\mu+1}{2}}z_2^2,
\end{eqnarray*}
it follows that
$$R_{\mathbf M}(0)D_{\bar w}z_1 = \bar w_1\frac{\lambda+1}{2}z_1^2+\bar w_2\frac{\lambda\mu}{\lambda+\mu}z_1z_2.$$ Similarly, we obtain the formula
$$R_{\mathbf M}(0)D_{\bar w}z_2 = \bar
w_1\frac{\lambda\mu}{\lambda+\mu}z_1z_2+\bar w_2\frac{\mu+1}{2}z_2^2.$$ We claim that
\begin{eqnarray}\label{orth} \langle (R_{\mathbf M}(0)D_{\bar w})^mz_i, (R_{\mathbf M}(0)D_{\bar w})^nz_j\rangle =0 \mbox{~for~ all~} m\neq n
\mbox {~and~} i,j=1,2.\end{eqnarray} This makes the calculation of
$$h(w,w) =\big (\! \big (\langle P(\bar w, 0)z_i,P(\bar w,
0)z_j\rangle \big )\! \big )_{1\leq i,j\leq 2},\, w \in U\subset \mathbb D^2,$$ which is the Hermitian metric
for the vector bundle $\mathcal P$, on some small open set $U\subseteq \mathbb D^2$ around $(0,0)$,
corresponding to the module $H^{(\lambda, \mu)}_0(\mathbb D^2)$, somewhat easier.

We will prove the claim by showing that $(R_{\mathbf M}(0)D_{\bar w})^nz_i$ consists of terms of degree $n+1$.
For this, it is enough to calculate $V_{\mathbf M}(0)^*(z_1^lz_2^k,0)$ and $V_{\mathbf M}(0)^*(0,z_1^lz_2^k)$
for different $l,\,k \geq 0$  such that $(l,k)\neq (0,0)$. Calculations similar to that of $V_{\mathbf M}(0)^*$
show that
\begin{eqnarray*}V_{\mathbf M}(0)^*(z_1^m,0)=
\sqrt{\frac{\lambda+m}{m+1}}z_1^{m+1}, V_{\mathbf M}(0)^*(0, z_2^n)= \sqrt{\frac{\mu+n}{n+1}}z_2^{n+1} \mbox{~and,~}\\
V_{\mathbf M}(0)^*(z_1^mz_2^{n+1},0)=V_{\mathbf M}(0)^*(0,z_1^{m+1}z_2^n)=
\frac{1}{\sqrt{\frac{m+1}{\mu+n}+\frac{n+1}{\mu+n}}}z_1^{m+1}z_2^{n+1}.\end{eqnarray*} Recall that $(R_{\mathbf
M}(0)D_{\bar w})z_i$ is of degree $2$.   From the equations given above, inductively,  we see that $(R_{\mathbf
M}(0)D_{\bar w})^nz_i$ is of degree $n+1$. Since monomials are orthogonal in $H^{(\lambda, \mu)}(\D^2)$, the
proof of claim \eqref{orth} is complete. We then have $$P(\bar w, 0)z_1= z_1+ \bar
w_1\frac{\lambda+1}{2}z_1^2+\bar w_2\frac{\lambda\mu}{\lambda+\mu}z_1z_2 + \d_{n=2}^{\infty}(R_{\mathbf
M}(0)D_{\bar w})^nz_1\mbox{~and~}$$ $$P(\bar w, 0)z_2= z_2+\bar w_1\frac{\lambda\mu}{\lambda+\mu}z_1z_2+\bar
w_2\frac{\mu+1}{2}z_2^2 +\d_{n=2 }^{\infty}(R_{\mathbf M}(0)D_{\bar w})^nz_2.$$ Putting all of this together, we
see that
$$h(w,w) = \begin{pmatrix} \lambda & 0 \\ 0 & \mu \end{pmatrix}
 + \sum a_{IJ} w^I\bar{w}^J,$$
where the sum is over all multi-indices $I,J$ satisfying ${|I|, |J| > 0}$ and $w^I=w_1^{i_1}w_2^{i_2}$,
$\bar{w}^J=\bar{w}_1^{j_1}\bar{w}_2^{j_2}$. The metric $h$ is (almost) normalized at $(0,0)$, that is,
$h(w,0) =\Big ( \begin{smallmatrix} \lambda & 0 \\ 0 & \mu \end{smallmatrix} \Big ).$  The metric $h_0$ obtained
by conjugating the metric $h$ by the invertible (constant) linear transformation $\Big ( \begin{smallmatrix}
\sqrt{\lambda} & 0 \\ 0 & \sqrt{\mu} \end{smallmatrix} \Big )$ induces an equivalence of holomorphic Hermitian
bundles. The vector bundle $\mathcal P$ equipped with the Hermitian metric $h_0$ has the additional property
that the metric is normalized: $h_0(w,0) = I$. The coefficient of $dw_i \wedge d\bar{w}_j$, $i,j=1,2$, in the
curvature of the holomorphic Hermitian bundle $\mathcal P$ at $(0,0)$  is then the Taylor coefficient of
$w_i\,\bar{w}_j$ in the expansion of $h_0$
around $(0,0)$ (cf. \cite[Lemma 2.3]{Wells}).

Thus the normalized metric $h_0(w,w)$, which is real analytic, is of the form
\begin{eqnarray*}h_0(w,w) &=& \left(%
\begin{array}{cc}
  \lambda\langle P(\bar w,0)z_1, P(\bar w, 0)z_1\rangle & \sqrt{\lambda\mu}\langle P(\bar w,0)z_1, P(\bar w, 0)z_2\rangle \\
  \sqrt{\lambda\mu}\langle P(\bar w,0)z_2, P(\bar w, 0)z_1\rangle & \mu\langle P(\bar w,0)z_2, P(\bar w, 0)z_2\rangle \\
\end{array}%
\right)\\ &=& I+\left(%
\begin{array}{cc}
  \frac{\lambda+1}{2}|w_1|^2+\frac{\lambda^2\mu}{(\lambda+\mu)^2}|w_2|^2
   &
  \frac{1}{\sqrt{\lambda\mu}}\big{(}\frac{\lambda\mu}{\lambda+\mu}\big{)}^2w_1\bar w_2 \\
  \frac{1}{\sqrt{\lambda\mu}}\big{(}\frac{\lambda\mu}{\lambda+\mu}\big{)}^2w_2\bar w_1  &
   \frac{\lambda\mu^2}{(\lambda+\mu)^2}|w_1|^2+\frac{\mu+1}{2}|w_2|^2
  \\
\end{array}%
\right)+ O(|w|^3),\end{eqnarray*} where $O(|w|^3)_{i,j}$ is of degree $\geq\, 3$. Explicitly, it is of the form
$$\d_{n=2}^{\infty}\langle (R_{\mathbf M}(0)D_{\bar w})^nz_i,(R_{\mathbf M}(0)D_{\bar w})^nz_j\rangle.$$  The
curvature at $(0,0)$, as pointed out earlier, is given by $\bar\partial\partial h_0(0,0)$. Consequently, if
$H_0^{(\lambda, \mu)}(\D^2)$ and
$H_0^{(\lambda^\prime, \mu^\prime)}(\D^2)$ are equivalent, then  the corresponding holomorphic Hermitian vector bundles $\mathcal P$ and $\tilde{\mathcal P}$ of rank $2$ must be equivalent. Hence their curvatures, in particular, at $(0,0)$, must be unitarily equivalent. The curvature for $\mathcal P$ at $(0,0)$ is  given by the $2\times 2$ matrices $$\left(%
\begin{array}{cc}
  \frac{\lambda+1}{2} & 0 \\
  0 & \frac{\lambda\mu^2}{(\lambda+\mu)^2} \\
\end{array}%
\right),~ \left(%
\begin{array}{cc}
  0 & \frac{1}{\sqrt{\lambda\mu}}\big{(}\frac{\lambda\mu}{\lambda+\mu}\big{)}^2 \\
  0 & 0 \\
\end{array}%
\right), ~\left(%
\begin{array}{cc}
  0 & 0 \\
  \frac{1}{\sqrt{\lambda\mu}}\big{(}\frac{\lambda\mu}{\lambda+\mu}\big{)}^2 & 0 \\
\end{array}%
\right), ~\left(%
\begin{array}{cc}
  \frac{\lambda^2\mu}{(\lambda+\mu)^2} & 0 \\
  0 & \frac{\mu+1}{2} \\
\end{array}%
\right). $$ The curvature for $\tilde{\mathcal P}$ has a similar form with $\lambda^\prime$ and $\mu^\prime$
in place of $\lambda$ and $\mu$ respectively.
All of them are to be simultaneously equivalent by some unitary map. The only unitary that intertwines the $2\times 2$ matrices $$\left(%
\begin{array}{cc}
  0 & \frac{1}{\sqrt{\lambda\mu}}\big{(}\frac{\lambda\mu}{\lambda+\mu}\big{)}^2 \\
  0 & 0 \\
\end{array}%
\right) \mbox{~and~} \left(%
\begin{array}{cc}
  0 & \frac{1}{\sqrt{\lambda'\mu'}}\big{(}\frac{\lambda'\mu'}{\lambda'+\mu'}\big{)}^2 \\
  0 & 0 \\
\end{array}%
\right)$$ is $aI$ with $|a|=1$. Since this fixes the unitary intertwiner, we see that the $2\times 2$ matrices $$\left(%
\begin{array}{cc}
  \frac{\lambda+1}{2} & 0 \\
  0 & \frac{\lambda\mu^2}{(\lambda+\mu)^2} \\
\end{array}%
\right)\mbox{~ and ~} \left(%
\begin{array}{cc}
  \frac{\lambda'+1}{2} & 0 \\
  0 & \frac{\lambda'\mu'^2}{(\lambda'+\mu')^2} \\
\end{array}%
\right)$$ must be equal. Hence we have $\frac{\lambda+1}{2}= \frac{\lambda+1}{2}$, that is $\lambda=\lambda'$.
Consequently, $\frac{\lambda\mu^2}{(\lambda+\mu)^2}=\frac{\lambda'\mu'^2}{(\lambda'+\mu')^2}$ gives
$\frac{\mu^2}{(\lambda+\mu)^2}=\frac{\mu'^2}{(\lambda+\mu')^2}$ and then
$$\mu^2(\lambda^2+2\lambda\mu'+\mu'^2)=\mu'^2(\lambda^2+2\lambda\mu+\mu^2), \mbox{~that~is,~}
(\mu-\mu')\{\lambda^2(\mu+\mu')+2\lambda\mu\mu'\}=0.$$ We then have $\mu=\mu'$. Therefore, $H_0^{(\lambda,
\mu)}(\D^2)$ and $H_0^{(\lambda^\prime, \mu^\prime)}(\D^2)$ are equivalent if and only if
$\lambda=\lambda^\prime$ and $\mu=\mu^\prime$.

\subsection{\sf \sf The $(n,k)$ examples} \label{nk} For a fixed natural number $j$, let  $I_j$ be the polynomial ideal generated by the set  $\{z_1^{n},z_1^{k_j}z_2^{n-k_j}\}$, $k_j\not = 0$. Let $\mathcal
M_j$ be the closure of $I_j$ in the Hardy space $H^2(\mathbb D^2)$. We claim that $\mathcal M_1$ and $\mathcal
M_2$ are inequivalent as Hilbert module unless $k_1=k_2$.  From Lemma \ref{B_1}, it follows that both the
modules $\mathcal M_1$ and $\mathcal M_2$ are in $\mathrm{B}_1(\mathbb D^2 \setminus X)$, where $X:=\{(0,z):|z|
< 1\}\cup \{(z,0):|z| < 1\}$ is the zero set of the ideal $I_j$, $j = 1,2$. However, there is a holomorphic
Hermitian line bundle corresponding to these modules on the projectivization of $\mathbb D^2 \setminus X$  at
$(0,0)$ (cf. \cite[pp. 264]{dmv}). Following the proof of \cite[Theorem 5.1]{dmv}, we see that if these modules
are assumed to be equivalent, then the corresponding line bundles they determine must also be equivalent. This
leads to contradiction unless $k_1\neq k_2$.

Suppose $L:\mathcal M_1\ra \mathcal M_2$ is given to be a unitary module map. Let $K_j$, $j=1,2$, be the
corresponding reproducing kernel. By our assumption, the localizations of the modules, $M_j(w)$ at the point
$w\in\mathbb D^2\setminus X$ are one dimensional and spanned by the corresponding reproducing kernel $K_j$,
$j=1,2$. Since $L$ intertwines module actions, it follows that $M_f^*LK_1(\cdot,w)=\ov{f(w)}LK_1(\cdot,w)$.
Hence,
\begin{eqnarray}\label{1} \label{th1}LK_1(\cdot,w)=\ov{g(w)}K_2(\cdot,w),\mbox{~for~} w\notin X.\end{eqnarray} We conclude that $g$ must be holomorphic on $\mathbb D^2\setminus X$ since both $LK_1(\cdot,w)$ and $K_2(\cdot,w)$ are
anti-holomorphic in $w$. For $j=1,2$, let $E_j$ be the holomorphic line bundle on $\mathbb P^1$ whose section on
the affine chart $U=\{w_1\neq0\}$ is given by \begin{eqnarray*}s_j(\theta)&=&{{\mbox{lim}}_{w\ra 0, \frac{\bar
w_2}{\bar w_1}=\theta}} \frac{K_j(z,w)}{\bar w_1^n}=\frac{z_1^n\bar w_1^n+z_1^{k_j}z_2^{n-k_j}\bar w_1^{k_j}\bar
w_2^{n-k_j}+\mbox{higher~ order~ terms}}{\bar w_1^n}\\
&=& z_1^{n_{}}+\theta^{n-k_j}z_1^{k_j}z_2^{n-k_j}.\end{eqnarray*} Using the ideas from the proof of
\cite[Theorem 5.1]{dmv}, one shows that $|g(w)|$ has a finite limit at each point of the variety $X$.  By the
Riemann removable singularity theorem, it follows that $g$ extends to a holomorphic function on all of $\mathbb
D^2$. Then from \eqref{1}, and the expression of $s_j(\theta)$, by a limiting argument, we find that
$Ls_1(\theta)=g(\theta)s_2(\theta)$.  The unitarity of the map $L$ implies that $$\|L
s_1(\theta)\|^2=|g(\theta)|^2\|s_2(\theta)\|^2$$ and consequently the bundles $E_j$ determined by $\mathcal
M_j$, $j=1,2$, on $\mathbb P^1$ are equivalent. We now calculate the curvature to determine when these line
bundles are equivalent. Since the monomials are orthonormal, we note that the square norm of the section is
given by $${\parallel s_1(\theta)\parallel}^2= 1+|\theta|^{2(n-k_j)}.$$ Consequently the curvature (actually
coefficient of the $(1,1)$ form $d\theta\wedge d\bar\theta$) of the line bundle on the affine chart $U$ is given
by
\begin{eqnarray*}\label{curv}
{\mathcal K_j}(\theta) &=& -\partial_{\theta}\partial_{\bar\theta} { \log}{\parallel
s_1(\theta)\parallel}^2~=~-\partial_{\theta}\partial_{\bar\theta} {\log}(1+|\theta|^{2(n-k_j)})\\&=&~-\partial_{\theta}\frac{(n-k_j)\theta^{(n-k_j)}\bar\theta^{(n-k_j-1)}}{1+|\theta|^{2(n-k_j)}}\\
&=&-\frac{(n-k_j)^2|\theta|^{2(n-k_j-1)}\{1+|\theta|^{2(n-k_j)}\}-
(n-k_j)^2|\theta|^{2(n-k_j)}|\theta|^{2(n-k_j-1)}}{\{1+|\theta|^{2(n-k_j)}\}^2}\\&=&
-\frac{(n-k_j)^2|\theta|^{2(n-k_j-1)}}{\{1+|\theta|^{2(n-k_j)}\}^2}.
\end{eqnarray*}
So if the bundles are equivalent on $\mathbb P^1$, then ${\mathcal K_1}(\theta)={\mathcal K_2}(\theta)$ for
$\theta \in U$, and we obtain
\begin{eqnarray*}\lefteqn{(n-k_1)^2\{|\theta|^{2(n-k_1-1)}+2|\theta|^{2(n-k_2)}|\theta|^{2(n-k_1-1)} +
|\theta|^{4(n-k_2)}|\theta|^{2(n-k_1-1)}\}}\\& - &
(n-k_2)^2\{|\theta|^{2(n-k_2-1)}+2|\theta|^{2(n-k_1)}|\theta|^{2(n-k_2-1)} +
|\theta|^{4(n-k_1)}|\theta|^{2(n-k_2-1)}\} = 0.\\\end{eqnarray*} Since the  equation given above must be
satisfied by all $\theta$ corresponding to the affine chart $U$, it must be an identity. In particular, the
coefficient of $|\theta|^{2\{(n-k_1)+(n-k_2)-1\}}$ must be $0$ implying $(n-k_1)^2=(n-k_2)^2$, that is,
$k_1=k_2$. Hence $\mathcal M_1$ and $\mathcal M_2$ are always inequivalent unless they are equal.

\subsection*{\sf Acknowledgement} The authors would like to thank R. G. Douglas 
and J.-P. Demailly for many hours of very helpful discussions on the topic of this paper.



\begin{thebibliography}{99}

\itemsep=\smallskipamount


\bibitem{A} E. Amar,  {\it Non division dans $A^\infty (\Omega)$}, Math. Z., \textbf{188}, 1985, 493-511.




\bibitem{as}
O.~P.~Agrawal and N.~Salinas, {\em Sharp kernels and canonical subspaces (revised)}, Amer. J. Math. \textbf{110}
(1988), no. 1, 23 -- 47.


\bibitem{B} L.~Boutet de Monvel, {\it On the index of Toeplitz
operatos of several complex variables}, Invent. Math. {\bf 50} (1979), 249-272.


\bibitem{cg}
X.~Chen and K.~Guo, \emph{Analytic Hilbert modules}, Chapman and Hall/CRC Research Notes in Mathematics,
\textbf{433}.

\bibitem{cd}
M.~ J.~ Cowen and R.~ G.~ Douglas, {\em Complex geometry and Operator theory}, Acta Math. \textbf{141} (1978),
187 -- 261.

\bibitem{cd1}
\bysame, \emph{On operators possessing an open set of eigenvalues}, Memorial Conf. for F\'{e}jer-Riesz, Colloq.
Math. Soc. J. Bolyai, 1980, pp.~323 -- 341.

\bibitem{cs}
R.~ E.~ Curto and N.~ Salinas, {\em Generalized Bergman kernels and the Cowen-Douglas theory}, Amer. J. Math.
\textbf{106} (1984), 447 -- 488.

\bibitem{DJ} A.~M.~Davie and N.~P.~Jewell, {\it  Toeplitz operators in several
complex variables}, J. Funct. Anal. \textbf{26}, 1977,  356--368.

\bibitem{D0} A.~Douady, {\it Le th\'eor\`eme des voisinages privili\'eg\'es},
S\'eminaire Jean Leray, \textbf{4}, 1964-1965, 33-47.

\bibitem{D1}
A.~Douady, {\it Le probl\`eme des modules pour les sous-espaces analytiques compacts d'un espace analytique
donn\'e}, Ann. Inst. Fourier, Grenoble, \textbf{16}, 1966, 1-95.

\bibitem{D}
R.~ G.~Douglas, {\em Invariants for Hilbert modules}, Proc. Sympos. Pure Math. \textbf{51}, Part 1, Amer. Math.
Soc., Providence, RI, 1990,  179--196.


\bibitem{dm}
R.~ G.~Douglas and G.~ Misra, {\em Quasi-free resolutions of Hilbert modules}, Integral Equations Operator
Theory \textbf{47} (2003), no. 4, 435 -- 456.

\bibitem{dm1}
\bysame, {\em On quasi-free Hilbert Modules}, New York J. Math. \textbf{360} (2005), 2229--2264.

\bibitem{dm2}
\bysame, {\em Equivalence of quotient Hilbert modules}, Proc. Ind. Acad. Sc.(Math. Sci.) \textbf{113} (2003),
281 - 292.

\bibitem{dm3}
\bysame, {\em Equivalence of quotient Hilbert modules. II.}, Trans. Amer. Math. Soc. \textbf{11} (2008), no. 4,
547 -- 561.

\bibitem{DMV}
R.~ G.~Douglas, G.~ Misra and C.~ Varughese, {\em On quotient modules- the case of arbitrary multiplicity}, J.
Funct. Anal. \textbf{174} (2000), 364 -- 398.

\bibitem{dmv}
\bysame, {\em Some geometric invariants from resolutions of Hilbert modules}, Systems, approximation, singular
integral operators, and related topics (Bordeaux,2000) Oper. Theory Adv. Appl. \textbf{129}, Birkhauser,
Basel, 2001, 241--270.

\bibitem{dp}
R.~G.~Douglas and V.~I.~Paulsen, {\em Hilbert modules over function algebra}, Longman Research Notes,
\textbf{217}, 1989.


\bibitem{dg}
Y.~Duan and K.~Guo, \emph{Dimension formula for localization of Hilbert modules}, J. Operator Theory, to appear.

\bibitem{pu3}
J.~ Eschmeier and M.~Putinar, {\em Spectral theory and sheaf theory. III}, J. Reine Angew. Math. \textbf{354}
(1984), 150 -- 163.

\bibitem{EP}
\bysame, {\it Spectral Decompositions and Analytic Sheaves}, London Math.  Monographs Vol. \textbf{10},
Clarendon Press, Oxford, 1996.

\bibitem{F}
G. Fischer, {\em Complex Analytic Geometry}, Lecture Notes in Mathematics, 538, 
Springer Verlag, New York, 1976.

\bibitem{fang}
X.~Fang,  {\em The Fredholm index of a pair of commuting operators. II.} J. Funct. Anal. \textbf{256} (2009),
no. 6, 1669--1692.

\bibitem{Fi} G.~Fischer, {\it Lineare Faserr\"aumeund koh\"arente Mudulgarben
\"uber komplexen R\"aume}, Arc. Math. {\bf 18} (1967), 609-617.

\bibitem{GR}
H.~Grauert and R.~Remmert, \emph{{Coherent analytic sheaves}}, Springer-Verlag, Berlin, \textbf{265}, 1984.

\bibitem{gr}
R.~C.~Gunning and H.~Rossi, \emph{Analytic functions of several complex variables}, Prentice-Hall, Inc.,
Englewood Cliffs, N.J. 1965.


\bibitem{hart}
R.~Hartshorne, \emph{Algebraic Geometry}, Graduate Studies in Mathematics, \textbf{52}. Springer, 1977.

\bibitem{krantz}
S.~G.~Krantz, \emph{Function theory of several complex variables}, Reprint of the 1992 edition, AMS Chelsea
Publishing, Providence, RI, 2001.

\bibitem{pu1}
M.~Putinar, \emph{Spectral theory and sheaf theory. I}, Dilation theory, Toeplitz operators, and other topics
(Timisoara/Herculane, 1982), Oper. Theory Adv. Appl. \textbf{11}, Birkhauser, Basel, 1983, 283--297.

\bibitem{pu2}
\bysame, {\em Spectral theory and sheaf theory. II},  Math. Z. \textbf{192} (1986), no. 3,
473--490.

\bibitem{put}
\bysame, {\em On invariant subspaces of several variable Bergman spaces}, Pacific J. Math. \textbf{147} (1991),
no. 2, 355--364.

\bibitem{P2}
\bysame,  {\it On the rigidity of Bergman submodules}, Amer.  J. Math. { \textbf{116}}, 1994, 1421-1432.


\bibitem{PS} M.~Putinar and S.~Sandberg, {\it Privilege on strictly
convex domains}, Rend. Mat. Acc. Lincei, s. \textbf{9} v. \textbf{15}, 2004, 39-45.


\bibitem{SC} S\'eminaire H. Cartan 1960/61, {\it Familles des espaces
complexes et fondements de la g\'eometrie analytique}, Paris, Ec. Norm. Sup., 1962.

\bibitem{Sun} C. Sundberg, {\it Exact sequences for generalized
Toeplitz operators}, Proc. Amer. Math. Soc. \textbf{101}, 1987, 634--636.

\bibitem{tay}
J.~L.~Taylor, \emph{Several complex variables with connections to algebraic geometry and Lie groups}, Graduate
Studies in Mathematics, \textbf{46}. American Mathematical Society, 2002.

\bibitem{V} U. Venugoplakrishna, {\it  Fredholm operators associated with strongly
pseudoconvex domains in $C\sp{n}$}, J. Funct. Anal. \textbf{9}, 1972, 349--373.

\bibitem{Wells}
R.~O.~Wells, {\em Differential analysis on complex manifolds}, Graduate Texts in Mathematics, Springer-Verlag,
New York-Berlin, \textbf{65}, 1980.




\end{thebibliography}
\end{document}